%
%


\documentclass[opre,sglanonrev]{informs4}

\RequirePackage{tgtermes}
\RequirePackage{newtxtext}
\RequirePackage{newtxmath}
\RequirePackage{bm}
\RequirePackage{endnotes}

\OneAndAHalfSpacedXII 

\usepackage{algorithm}
\usepackage{algpseudocode}
\usepackage{tikz}

\usepackage{natbib}
 \bibpunct[, ]{(}{)}{,}{a}{}{,}%
 \def\bibfont{\small}%
 \def\bibsep{\smallskipamount}%
 \def\bibhang{24pt}%
 \def\newblock{\ }%
 \def\BIBand{and}%

\EquationsNumberedThrough    

\TheoremsNumberedThrough     
\ECRepeatTheorems  %


\usepackage{fix-cm}
\usepackage{subcaption}
\usepackage{optidef}
\usepackage{longtable}
\usepackage{thm-restate}
\usepackage{comment}
\MANUSCRIPTNO{MOOR-0001-2024.00}

\newcommand{\esg}[1]{\textcolor{magenta}{#1}} 

\usepackage{tikz,multirow,multicol}
\usepackage{enumitem}
\usetikzlibrary{arrows.meta, positioning}
\usepackage{algorithm, algpseudocode}
\usepackage{soul}
\MANUSCRIPTNO{MOOR-0001-2024.00}

\begin{document}


\RUNAUTHOR{Sindermann, Gel, Erkip}

\RUNTITLE{Optimal Replenishment Policies for IVMs}

\TITLE{Optimal Replenishment Policies for Industrial Vending Machines}

\ARTICLEAUTHORS{%
\AUTHOR{Karina M. Sindermann, Esma S. Gel\thanks{Corresponding Author, esma.gel@unl.edu}}
\AFF{College of Business, University of Nebraska-Lincoln, \EMAIL{ksindermann2@unl.edu}, \EMAIL{esma.gel@unl.edu}}
\AUTHOR{Nesim Erkip}
\AFF{Department of Industrial Engineering, Bilkent University, Ankara, Turkey, \EMAIL{nesim@bilkent.edu.tr}, \URL{}}
}

\ABSTRACT{%
Industrial Vending Machines (IVMs) automate the dispensing of a variety of supplies like safety equipment and tools at customer sites, providing 24/7 access while tracking inventory in real-time. Industrial distribution companies typically manage the replenishment of IVMs using periodic schedules, which do not take advantage of these advanced real-time monitoring capabilities. We develop two approaches to optimize the long-term average cost of replenishments and stockouts per unit time: a state-dependent optimal control policy that jointly considers all inventory levels (referred to as {\it trigger set policy}) and a fixed cycle policy that optimizes replenishment frequency. We prove the monotonicity of the optimal trigger set policy and leverage it to design a computationally efficient approximate online control framework. Unlike existing methods, which typically handle a very limited number of items due to computational constraints, our approach scales to hundreds of items while achieving near-optimal performance. 
Leveraging transaction data from our industrial partner, we conduct an extensive set of numerical experiments to demonstrate this claim. Our results show that optimal fixed cycle replenishment reduces costs by 61.7 to 78.6\% compared to current practice, with our online control framework delivering an additional 4.1 to 22.9\% improvement. Our novel theoretical results provide practical tools for effective replenishment management in this modern vendor-managed inventory context. }

\KEYWORDS{vendor managed inventory; joint replenishment problem; industrial vending machines; optimal stopping time, optimal fixed cycle policies}
\HISTORY{\today}
\maketitle

\section{Introduction}
\label{sec:Introduction}
Industrial Vending Machines (IVMs) have emerged as an important tool in the industrial distribution industry, which focuses on providing companies with the industrial supplies, equipment, and consumables needed to keep their operations running smoothly. 
This industry is a significant sector of the economy. The broader industrial distribution market generated approximately \$2.5 trillion in revenue in 2017 \citep{AbdelnourShakeout2019}. Within the industrial distribution market, the more narrowly defined industrial supplies wholesale segment alone has been estimated to generate \$111.7 billion in revenue by 2024, having grown at a compound annual growth rate of 2.6\% over the past five years \citep{IBISWorld2024}. 

In the industrial distribution industry, the relationship between suppliers and their customers, typically manufacturing firms, often follows a Vendor Managed Inventory (VMI) model. In these arrangements, suppliers take responsibility for managing inventory replenishment at the customer’s location, determining both the timing and quantity of replenishments based on real-time inventory levels and demand data~\citep{darwish2010vendor}.  Despite rapid growth, the industry faces significant challenges, including digital disruption, rising customer expectations, and new market entry~\citep{AbdelnourShakeout2019}. In response, leading distributors are moving beyond traditional product distribution to offer value-added services and embrace digital transformation. 

The use of IVMs represents an innovative solution that addresses these industry trends. Unlike traditional vending machines, IVMs in B2B context are sophisticated systems designed for workplace environments, providing controlled access to industrial tools, equipment, consumables, safety equipment and other industrial supplies~\citep{fastenal2012business}. IVMs enhance VMI capabilities by offering real-time data collection, automated replenishment triggers, and item-level tracking, going beyond what is typically possible in traditional VMI systems. Real-time inventory tracking enables automated reordering and reduces stockout risks~\citep{falasca2018success,KROS2019100506}. These systems support data-driven decision making, facilitating trend identification and purchasing optimization. IVMs improve accountability and reduce material consumption through controlled access \citep{falasca2016performance, KROS2019100506}. Furthermore, employee efficiency is improved by placing IVMs near the point of work \citep{Manrique2015TheGS}. These benefits collectively lead to reduced inventory costs, minimized downtime, and improved operational efficiency.

IVMs come in several types (see Figure~\ref{fig:vending_machines_1}): coil machines dispense individual items, offering control over fast-moving items, and allow product-specific access controls. Sensor machines provide access with item traceability. Options include lockers, cabinets, and drawers that support various sizes of products. Locker machines vend large items, manage asset check-out/return, and serve as pick-up points for deliveries. Each type is equipped with software that integrates with the company's inventory management system, allowing for real-time tracking and automated reordering processes~\citep{1fastvend_nodate}.

\begin{figure}[htp]
\captionsetup{font=small} 
\FIGURE
{\begin{subfigure}[b]{0.3\textwidth}
    \includegraphics[width=\textwidth]{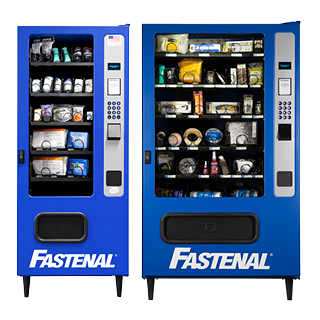}
    \caption{Coil Machine}
\end{subfigure}
\hfill
\begin{subfigure}[b]{0.3\textwidth}
    \includegraphics[width=\textwidth]{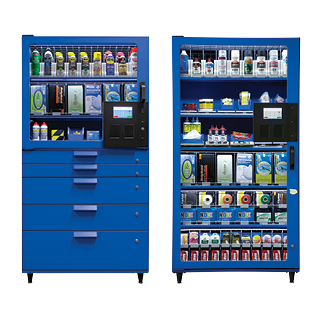}
    \caption{Sensor Machine}
\end{subfigure}
\hfill
\begin{subfigure}[b]{0.3\textwidth}
    \includegraphics[width=\textwidth]{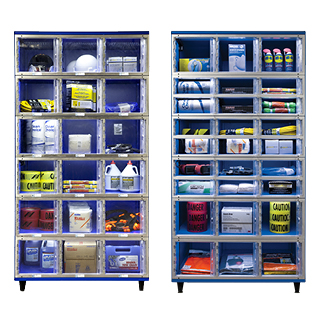}
    \caption{Locker Machine}
\end{subfigure}}
{Types of Industrial Vending Machines\label{fig:vending_machines_1}}
{Source: \citet{1fastvend_nodate}}
\end{figure}

Despite the ubiquitous use of IVMs with these advanced capabilities in the industry, there is a notable gap in research regarding optimal inventory management and replenishment strategies in IVM contexts. Although IVMs provide sophisticated technological features, their full potential can only be realized through the development and implementation of effective inventory control mechanisms. The current literature has not adequately addressed how to optimally utilize real-time data and automated features of IVMs to minimize costs while maintaining high service levels. This work aims to develop a theoretical basis for optimal replenishment control in this new context that has become so prevalent in modern industry. By establishing a rigorous mathematical framework, we seek to provide insights that can be applied to the broader field of IVM-based inventory management. 

We demonstrate our solution approach on real-life data obtained from our industrial partner,
a leading distributor of industrial and construction supplies in North America 
with more than 3000 market locations and more than 100,000 IVMs deployed in customer locations. 
The company’s extensive network of IVMs poses a complex inventory management challenge, balancing high service levels with minimal operating costs.~\citep{Manrique2015TheGS}. 

Industrial distribution companies typically serve their customers from a local distribution center. The customer's role is primarily to consume items as needed, while the industrial distribution company (which we will refer to as ``supplier'' in the rest of the paper) manages all aspects of inventory replenishment. Typically, each customer is served by an assigned staff member who is in charge of supplying the IVMs at a customer site with a predetermined schedule. The cabin capacity of the trucks is sufficient to fully restock all of the IVMs at a customer site. The number of unique items supplied depends on the particular context, but is typically in the order of hundreds. Inventory placed in the IVMs are immediately paid for by the customer. Hence, the supplier does not carry inventory holding costs in the traditional sense but owns the IVMs and therefore incurs capital investment costs associated with placing them in the customer's site. 

The industry places a great deal of emphasis on customer service and targets very high service levels when setting inventory control policies. For example, our industrial partner currently visits a lot of its customers daily to restock IVMs, although they would like to reduce the replenishment frequency per week to limit operational costs. Hence, the problem is characterized by multiple items with stochastic demand, a fixed cost for each replenishment visit, and item-specific stockout costs, although several characteristics make it strictly different from the previously studied inventory control problems. The IVMs placed at the customer site have limited capacity and real-time inventory information is available. It is also reasonable to consider a positive, although relatively short (e.g., one day), lead time between the replenishment decision and the actual replenishment at the customer site.

Since it is an important and nontrivial building block, we focus on the problem of optimizing the replenishment strategy for the IVMs at a single customer site, to minimize the long-run expected total cost per unit of time, where total cost includes fixed replenishment costs as well as stockout costs incurred when the customer cannot find an item in the IVM. More specifically, we are interested in determining (1) given IVM slot allocations for each item, when (or how frequently) should the supplier replenish the stock, and (2) developing scalable approximation methods that can provide near-optimal solutions for realistic instances with hundreds of items.

In the next section, we demonstrate that this introduces a new class of non-trivial replenishment control problems, despite the extensive existing literature on related problems. We formulate the control problem and present solution methodologies to obtain two different types of policies: (i) {\em trigger set control policy}, that is, policies that trigger a replenishment decision based on the state of the system (i.e., inventory levels of all items), and (ii) {\em fixed cycle replenishment}, that is, policies that replenish with fixed frequency. We offer theoretical analysis of both classes of policies and demonstrate their effectiveness through both a comprehensive numerical study and a real-world case study using our industrial partner's real-life transaction data from the collection of IVMs at a customer site.

\subsection{Technical Contributions:}
Our main technical contributions include the development and analysis of two distinct approaches to optimize IVM replenishment: a state-dependent trigger set control policy and a fixed cycle replenishment policy. In addition to establishing structural results for optimal control policy, we develop a novel near-optimal approximate online control framework that, for the first time in the literature, enables solving real-sized problems with hundreds of items to optimize the long-run average cost of IVM replenishments.

\subsubsection{Optimal Trigger Set Control Policy:}
We first formulate the IVM replenishment problem as a Markov decision process that aims to identify the set of states that trigger replenishment to minimize the long-run average cost per unit time. This formulation becomes computationally intractable because of the exponential growth of the state space with the number of items carried on the IVMs. To overcome this challenge, we formulate the problem as an optimal stopping-rule problem and show a number of structural results starting with monotonicity, which allows us to offer a computationally efficient and near-optimal approximate online control policy capable of solving problems with hundreds of items. 

\subsubsection{Fixed Cycle Replenishment Policy:}
We derive closed-form expressions for the optimal fixed replenishment cycle length that minimizes long-run average costs per unit time. Under reasonable conditions on the fixed replenishment cost, we prove that there exists a unique optimal cycle length that balances fixed costs against increasing stockout costs. Although this policy is theoretically suboptimal compared to the state-dependent trigger set control policy, it offers significant practical advantages in implementation due to simpler scheduling and coordination.

\subsubsection{Approximate Online Control Framework:}
A key contribution of our work is solving the IVM replenishment problem at industrial scale. For inventory systems with multiple items and a joint fixed cost for replenishment, finding optimal policies is notoriously difficult - the computational requirements grow exponentially with the number of products, making traditional approaches intractable beyond five items \citep{melchiors2002calculating}. While \citet{ignall1969optimal} proved that optimal policies for such systems cannot generally be described by a simple set of parameters per item, we show that for the IVM context, where all products are replenished to capacity upon ordering, the optimal policy has a remarkably simple structure. 

Through our monotonicity results, we show that the entire optimal policy can be characterized by a single critical value, which can be efficiently approximated. Additionally, we derive closed-form expressions for optimal fixed cycle replenishment. Both approaches can handle industrial-scale problems, which we demonstrate through our case study by successfully implementing both fixed cycle and near-optimal trigger set control policies for more than 300 products with varying demand rates. The approximate online control framework achieves a cost reduction of \mbox{4.1 to 22.9\%} compared to optimal fixed cycle replenishments while adopting optimal fixed cycle replenishment frequencies shows potential cost reductions of \mbox{61.7 to 78.6\%} compared to current practices, which typically service customer locations daily.
Our numerical study demonstrates significant computational efficiency gains with our approximation methods. For a problem with only six items, calculating the optimal policy required nearly 75 seconds in our experiments. In contrast, our approximations required only 0.05 seconds or 0.0004 seconds for the same instance, depending on the approximate expression used. This efficiency gap widens further as the problem size increases, making our approximation methods essential for industrial-scale problems with hundreds of items.


\subsection{Organization}
The remainder of this paper is structured as follows. Section~\ref{sec:literature} presents a review of the literature, followed by a detailed description of the problem in Section~\ref{sec:problem}. In Section~\ref{sec:optimal_policy}, we formulate the optimal trigger set control policy problem and provide structural results. In Section~\ref{sec:periodic_policy}, we present fixed cycle replenishment and determine the optimal replenishment cycle time. 
Section~\ref{sec:numerical} presents a numerical study that compares the performance of different policies on various dimensions of the problem. Section~\ref{sec:casestudy} demonstrates the practical application of our methodologies through a real-world case study using 
actual transaction data from our industrial partner. Finally, Section~\ref{sec:Conclusions} offers concluding remarks and directions for future work.

\section{Literature Review}
\label{sec:literature}

The structure of VMI agreements typically falls into two categories: (i) supplier-owned  inventory (consignment), where the supplier retains ownership until the point of sale~\citep{ katariya2014cyclic}, and (ii) traditional VMI, where the retailer assumes ownership of the inventory upon delivery. To mitigate the risk of overstocking in traditional VMI arrangements, contracts often stipulate inventory upper bounds with associated penalties for non-compliance~\citep{fry2001coordinating,darwish2010vendor, hariga2013vendor}. In addition, stock-out penalties are commonly implemented to ensure balanced inventory management and maintain service levels~\citep{fry2001coordinating, hariga2013vendor}. IVMs, on the other hand, have an inherent upper bound on slot capacity, providing a built-in hard constraint on stock levels. We refer to~\citet{govindan2013vendor} and~\citet{js2019literature} for an in-depth review of VMI, and continue to review more closely related literature. 

\paragraph{Literature on vending machines:} Regular vending machines differ from IVMs primarily in ownership and cost structure. In B2C or retail vending machines, the business typically owns both the machine and its inventory and bears inventory holding costs. This affects inventory management, as traditional order-up-to policies may not be optimal when balancing stock levels for profitability.

In the industrial distribution domain, \citet{zhang2022min} represents one of the few studies specifically addressing inventory management for IVMs. They focus on determining a continuous review min-max $(s,S)$ inventory policy, for each item. However, it should be noted that, as demonstrated by \citet{ignall1969optimal}, even the best $(s,S)$ policy may not achieve optimality in a multi-item setting. In the medical sector, IVMs can take the form of automated dispensing systems (ADS) for medications. \citet{dobson2019reducing} propose an integer linear programming algorithm to optimize ADS configurations. 

In the domain of regular vending machines, several key studies have advanced our understanding of B2C vending inventory management. \citet{miyamoto2003algorithms} addressed slot allocation for vending machines, developing an integer linear programming model to determine the optimal slot allocation and replenishment cycles. \citet{park2013operation} expanded on this, exploring stockout-based substitution and integrating slot allocation, replenishment points, thresholds, and vehicle routing into a profit-maximizing heuristic approach. The rise of smart vending machines has introduced new opportunities for replenishment optimization. \citet{poon2010real} proposed a real-time replenishment system, using technology such as GPRS and WiFi. 

To our knowledge,~\citet{zhang2022min} presents the only study addressing inventory management for IVMs in a non-medical setting. The scarcity of research in this specific domain highlights a significant gap that our research aims to fill by introducing a novel set of formulations, solution methodologies, and a real-life case study.

\paragraph{Relevant inventory control theory literature:} The Joint Replenishment Problem with deterministic demand (DJRP) and stochastic demand (SJRP) are established areas of research that address the coordination of replenishments for multiple items to minimize total replenishment costs. These problems generally apply to two main contexts: one-product, multiple-location settings (e.g., a central warehouse supplying multiple retailers) and single-location, multiple-product scenarios. 

The key components of a joint replenishment problem include (i) a major ordering cost per replenishment (similar to our model's fixed replenishment cost, $A$) and (ii) a product-specific minor ordering cost for including product $j$ in a replenishment. In the deterministic case (i.e., DJRP), demand is assumed to be known and constant, while the stochastic case (i.e., SJRP) deals with uncertain demand.  For a more comprehensive review of the JRP and its variants, we refer the reader to the review articles~\citet{khouja2008review},~\citet{bastos2017systematic} and~\citet{peng2022review}. 

In the JRP domain, our work is more related to the SJRP. However, we consider item-specific stockout costs $b_j$, instead of minor ordering costs. Furthermore, following the retailer-owned VMI approach, we do not directly define inventory holding costs for the supplier. However, we acknowledge that the supplier incurs implicit holding costs due to the capital investment for the IVMs and reflect this fact through the use of limited IVM slot capacity in our formulations. More direct consideration of IVM investment costs is discussed in Section~\ref{sec:Conclusions} as a future area of research. 


Among continuous review policies, the can-order policy, known as $(s,c,S)$, has been a popular tool of researchers in this area~\citep{balintfy1964basic}. Under this policy, an order is triggered when an item's inventory position reaches its must-order point, $s_j$. Other items are included in the order if their inventory position is at or below their respective can-order points~$c_j$. All items in the order are replenished up to their respective~$S_j$ levels.
The can-order point~$c_j$, allows flexibility in replenishment decisions. It enables the system to avoid replenishing products that still have sufficient inventory, saving on minor replenishment and holding costs.

However, in terms of the optimality of the $(s,c,S)$ policy,~\citet{ignall1969optimal} demonstrated that can-order policies are not always optimal and that the optimal policy lacks a simple structure.  Here, a ``simple structure'' refers to policies that can be defined by a fixed set of parameters for each product independently, such as the $(s,c,S)$ policy with its must-order point $s_j$, can-order point~$c_j$, and order-up-to level~$S_j$ for each product~$j$. 
Instead, an optimal policy requires defining order up to levels~$S_j(\bm i)$ for each state~$\bm i$, which is a vector that represents the levels of inventory of all items. This state-dependent approach eliminates the need for can-order points, but significantly increases the policy's complexity.  In fact, even finding the best possible can-order policy becomes computationally intractable as the number of items increases~\citep{melchiors2002calculating}. However, due to its simple and intuitive structure, the literature has focused extensively on developing methods to find good can-order policies rather than optimal ones~\citep{silver1974control, federgruen1984coordinated, melchiors2002calculating, feng2015replenishment, creemers2022joint}.

Due to the lack of product-dependent holding and replenishment costs in our IVM replenishment problem context, we can show that it is optimal to replenish all products up to their slot allocation (i.e., the number of vending machine slots allocated to the storage of the item) whenever a replenishment occurs, making the order-up-to levels~$S_j$ constant across all states in which a replenishment is triggered. This differs from traditional VMI literature where determining optimal order-up-to levels is a primary focus~\citep{axsater2001note}. Consequently, our analysis focuses solely on determining the set of states that should trigger a replenishment. We approach this by defining a ``trigger set control policy,'' which specifies all inventory states that trigger a replenishment. This contrasts with traditional ``threshold policies'' that initiate a replenishment when the inventory of any single item reaches its individual threshold~$s_j$ in multi-item inventory management contexts. 

Figure~\ref{fig:optimal_against_threshold} illustrates an example showing that a threshold policy is not necessarily optimal for our IVM replenishment problem. The figure on the left shows the optimal policy (which is of trigger set type), and the figure on the right shows the best threshold policy for a simple, two-item example.  Since the triggering of a replenishment under an optimal policy for our problem depends on the inventory levels of all products in the IVM (i.e., the complete state variable), we do not assume an independent threshold policy structure when searching for an optimal replenishment policy. 

\begin{figure}
\FIGURE
{\includegraphics[width=0.6\textwidth]{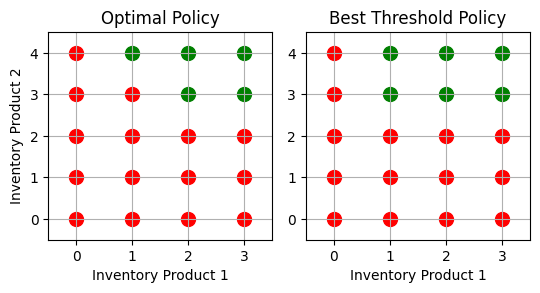}}
{Comparison of the optimal policy and best threshold policy\label{fig:optimal_against_threshold}}
{The optimal policy achieves an average cost of 7.3831 per unit time, while the best threshold policy has an average cost of 7.3943 per unit time. Replenishment happens in states marked with red dots, defined as {\it stopping states}. Parameters: $A = 10$ (fixed cost per replenishment), $\bm{i}^0 = (3, 4)$ (initial inventory state), $\bm{b} = [6, 6]$ (stockout costs for each item), $\bm{\lambda} = [1, 2]$ (Poisson demand rates), and $\tau = 1$ (lead time).}
\end{figure}

This is significant as our problem diverges from many continuous review policy papers that use threshold policies~\citep[such as][]{creemers2022joint, zhang2022min, minner2007replenishment, feng2015replenishment, park2012heuristic}. 
The popularity of these threshold policies can be explained by the complexity of finding an optimal policy and concerns about the practicality of implementation. We note that the latter concern is significantly alleviated in the IVM setting.

\citet{schulz2024integer} demonstrate that even optimizing the cycle length in the two-item Joint Replenishment Problem (JRP) is as difficult as integer factorization, underscoring the broader challenge of solving joint replenishment problems optimally. \citet{ignall1969optimal} further explain that the optimal policy can be determined using a renewal Markov Decision Process (MDP). However, the state and action spaces grow exponentially with the number of items making the renewal MDP impractical to solve for more than a few items. Recent research has sought to mitigate this curse of dimensionality. For instance, \citet{zhang2024low} propose a low-rank approximation via moment coupling, which significantly reduces the state space size in MDPs. However, reduction remains limited, making exact solutions for realistic JRPs infeasible and reinforcing the need for heuristics and structural insights in designing practical policies. 
A related challenge appears in ATM replenishment, where the problem is which ATMs to replenish. Like JRP, it faces computational challenges due to the exponential growth of state space. To address this, \citet{zhang2018automated} develop an index-based policy, assigning indices to ATMs to guide replenishment decisions. This parallels our approach, where indices derived from inventory states determine optimal replenishment states.

Our approach involves translating this complex Markov renewal process problem into an optimal stopping problem with useful structural properties (monotonicity). To solve this optimal stopping problem, we provide a linear programming formulation, although the problem of exponential state growth still renders it impossible to solve realistic scenarios in practical time. Fortunately, the structural properties shown below yield an approximate online policy that can be used to solve real instances of the problem almost optimally. 


\section{Problem Description}
\label{sec:problem}

We consider one or more IVMs located at a single customer site, with a total of $C$ slots to store inventory of $J$ unique items.  The number of storage slots allocated to each item $j$ is denoted by ${Q_j \in \mathbb{Z}}$, where ${Q_j > 0}$ and ${\sum_j Q_j=C}$.  Each item ${j \in J}$ is characterized by random demand with a rate of $\lambda_j \in \mathbb{R}$ per unit time, and demand arrivals for each item $j$ are modeled using a Poisson process that is independent of the demand for other items. 

A stockout cost of $b_j$ is incurred for each item $j$ demanded by the customer but unavailable to dispense through the IVM. All stockouts are recorded and treated as lost sales to the IVM; we assume that these demands are met through other means, which is included in the stockout cost~$b_j$. The fixed replenishment cost,~$A$, is constant and independent of the item mix and quantity that is restocked in the IVM(s) during the replenishment event. The supplier is paid in full for all inventory placed in the IVM at the time of replenishment.

Let $\tau$ denote the known and constant lead time needed to resupply the IVMs on site (that is, time to load, transport, restock, etc.), once a replenishment action is triggered. We consider that the system can be replenished at any time during working hours at the customer site and that there are no limitations or additional costs associated with transporting any quantity of items to the customer site.
We further characterize the replenishment process following the current industrial practice. 

\begin{assumption}\label{assum:no_overlap} 
A replenishment to the customer site can accommodate any quantity of items required to restock the machine in a single trip. In addition, there is at most one outstanding replenishment at any time.
\end{assumption}

The first part means that the driver assigned to the customer site can accommodate any quantity of items required to restock the machine in a single trip. The latter part follows because there is one assigned driver to a site, eliminating the possibility of more than one outstanding order. These assumptions are typically well justified, based on our work with our industrial partner. 

The system state at any given time is assumed to be known with perfect information by the supplier (owing to IVM capabilities) and is represented by the vector, $\bm {i} = (i_1, i_2, \ldots, i_J)$, where $i_j\leq Q_j$ denotes the current inventory level of item~$j$.  We use $\bm{Q} = (Q_1, Q_2, \ldots, Q_J) \in \mathbb{Z}^{J}$ to represent the vector of order-up-to levels, which is the initial state in each replenishment cycle, $\bm{i^0}$.
For a complete list of notation used throughout the paper, we refer the reader to Table \ref{tab:notation} in the Appendix \ref{sec:notation}.

We would like to identify an optimal control policy that triggers replenishments on the basis of this system state, to minimize the long-run expected average cost per unit time. 
A simple characterization for this context is that there is no advantage to not filling all slots allocated to the item $j$ when visiting the customer site to replenish other items. 


\begin{proposition}
The following properties hold for an optimal policy: (i) each time the customer site is visited, all items are restocked to their respective order-up-to levels (i.e., $Q_j$ for item $j$), (ii)~there is at most one outstanding replenishment order at any time.
\end{proposition}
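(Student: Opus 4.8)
The plan is to establish both properties via an interchange/exchange argument on sample paths, leveraging the absence of item-specific holding costs and the structure of the fixed replenishment cost $A$. For part (i), I would argue that given any admissible policy $\pi$ that, at some replenishment epoch, restocks item $j$ to a level $i_j' < Q_j$, we can construct a modified policy $\pi'$ that is identical to $\pi$ except that it tops off item $j$ to $Q_j$ at that epoch (and mirrors $\pi$'s subsequent replenishment decisions). The key observations are: (a) the fixed cost $A$ is incurred regardless of the quantity restocked, so adding units to an already-occurring replenishment is free; (b) there is no holding cost on inventory sitting in the IVM, so the extra units of item $j$ carry no penalty; (c) since the supplier is paid in full at replenishment for all inventory placed, the extra units are cost-neutral on the payment side as well; and (d) having more of item $j$ on hand can only weakly decrease the number of future stockouts of item $j$ — under the natural coupling of the Poisson demand streams, the inventory trajectory of item $j$ under $\pi'$ stays pointwise at least as high as under $\pi$ until the next replenishment, so every demand that $\pi'$ loses is also lost by $\pi$. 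Hence the long-run average cost under $\pi'$ is no larger than under $\pi$, and iterating (or applying this to all items at all replenishment epochs) shows that restocking everything to $\bm Q$ is without loss of optimality.

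For part (ii), I would similarly argue by contradiction: suppose an optimal policy issues a replenishment order while another is still outstanding (i.e., within the lead time $\tau$ of a prior order). By Assumption~\ref{assum:no_overlap} the industrial setting forbids this, but even within a relaxed model one can show it is suboptimal — cancelling the redundant second order (or rather, never issuing it) saves the fixed cost $A$, while the on-site inventory after the first order's arrival is already at $\bm Q$ by part (i), so no additional stockouts are created by the removal. More carefully, I would note that since after any delivery all items sit at their maximum levels $Q_j$, a second order arriving shortly after cannot raise any inventory level (it is already capped), so it delivers zero benefit at positive cost $A$; dropping it strictly improves cost unless $A = 0$, and in the weak-inequality sense it is always at least as good. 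Thus an optimal policy can always be taken to have at most one outstanding order.

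The main obstacle — more a matter of care than of depth — is making the sample-path coupling argument rigorous in the long-run-average-cost setting: one must verify that the modification $\pi'$ is still an admissible (non-anticipating) policy, that the induced process is still a well-defined regenerative/Markov-renewal process with finite expected cycle length, and that pointwise domination of per-unit-time cost along coupled sample paths transfers to domination of the long-run average (e.g., via the renewal-reward theorem applied to a common regeneration structure, or via a direct $\limsup$ comparison). A secondary subtlety is handling the lead time $\tau$ correctly in the coupling for part (i): the "extra" units of item $j$ only materialize on-site after the delay $\tau$, so the domination of inventory trajectories should be stated from the arrival epoch onward, which is exactly the window over which future stockouts of item $j$ are affected. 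Once these bookkeeping points are settled, both claims follow from the interchange argument sketched above.
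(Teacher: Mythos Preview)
Your proposal is correct and takes essentially the same approach as the paper: both parts rest on the observations that there are no holding costs, no capacity limits on a replenishment, and that the fixed cost $A$ is incurred per visit independent of quantity. Your sample-path interchange argument for (i) and the ``second order arrives only after the first, which already restores $\bm Q$'' reasoning for (ii) are exactly the ideas the paper uses, though you spell out the coupling and regenerative-structure caveats more carefully than the paper's brief prose proof does.
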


\begin{proof}
  {\it Proof.}  The result in (i) follows from two key assumptions: the absence of holding costs and the lack of replenishment capacity limits. Without holding costs, there is no penalty for maintaining high inventory levels, given $Q_j$ for all items $j$. Furthermore, without constraints on replenishment quantities, it is always feasible to restock each item to its maximum capacity. Consequently, filling all items to their $Q_j$ levels at each visit minimizes the risk of stockouts between visits without incurring additional costs. For the property in (ii), consider a state where a replenishment order is outstanding. Since this order will restore all items to their maximum capacity $Q_j$ after lead time $\tau$, triggering another replenishment before the first one arrives cannot reduce stockout costs - the second order cannot arrive before the first one completes. Moreover, triggering the second order would incur an additional fixed cost $A$. Therefore, it is optimal to wait until the first replenishment is complete before considering another replenishment decision. \qed
\end{proof}

\section{Trigger Set Control Policies: Analysis and Implementation}
\label{sec:optimal_policy}

In this section, we characterize the structure of optimal trigger set control policies and develop computationally efficient approximation methods to identify these policies. We first present a continuous-time Markov chain model and derive the average cost expression under a trigger set, allowing us to formulate an LP to find the optimal trigger set. Although this provides an optimal solution, its implementation is impractical for large-scale applications. Therefore, we formulate a stopping rule problem, which we use to prove the monotonicity of an optimal policy. Finally, we show how these theoretical properties lead to an online control policy framework and develop computationally efficient approximations of this policy for practical implementation.

\subsection{Binary Decision Renewal Process Formulation}
\label{Model Formulation}
We model the dynamics of system state $I(t)$ as a continuous-time Markov chain where the time between events (that is, demand arrivals) is exponentially distributed with rate $\Lambda = \sum_{j=1}^J \lambda_j$, transitions between states are governed by the exponential arrival rates of the items, $\lambda_j$, and the initial state, $I(0)=\bm {i^0}$. 
At state $\bm i$, a demand for item $j$ decreases $i_j$ by 1, and the system transitions to state $\bm{i^{-j}}=(i_1,\dots,i_j-1,\dots,i_J)$. If $i_j \leq 0$ when a demand occurs, a stockout is recorded and a stockout penalty of $b_j$ is incurred for each unit of item $j$ that is demanded but unavailable in the IVM.

Replenishment orders to restock all items to their respective $Q_j$ levels are triggered when the system enters one of the states in the {\it trigger set}. Upon restocking, the state is returned to $\bm{i^0}$. Hence, the system dynamics constitutes a regenerative process that probabilistically restarts itself. 

Within the state space $\cal{S}$, we define the trigger set $\cal U$ as the subset of states where replenishment is triggered and $\cal{\overline{\cal U}}$ as the set of continuation states where replenishment is not triggered. Naturally, these sets are complementary, that is, $\cal U \cup \overline{\cal U} = \cal{S}$ and $\cal U \cap \overline{\cal U} = \emptyset$. We consider an infinite-horizon control problem, where the objective is to find the trigger set that minimizes the long-run expected average total cost per unit time. 

\begin{assumption}\label{assum:initial_state}
  We impose the following constraints on our model: (i) $\bm{i^0} \in \cal{\overline{\cal U}}$, i.e., we do not trigger when the IVM is full, and (ii) there exists at least one state reachable in finite time that triggers replenishment, i.e., $\cal{U}_r \subset \mathcal{S}$ and $\cal{U}_r \neq \emptyset$. This ensures that the system is replenished after a finite amount of time, $\tau$ and returns to the state $\bm{i^0}$, maintaining ergodicity.
\end{assumption}

The role of the assumption is examined in Section~\ref{sec:numerical}, where we establish bounds on the fixed cost,~$A$ that ensure that both assumptions are satisfied. In particular, we show how a minimum fixed cost $A_{\min}$ ensures the initial state is in the continue set (Assumption~\ref{assum:initial_state}) and prevents overlapping replenishment orders (Assumption~\ref{assum:no_overlap}).

We are now ready to state the binary-decision renewal process formulation. At each decision epoch, the supplier chooses between two actions: (1) stop and trigger a restock of the IVM or (2) continue. 
Let $G(\bm{i})$ denote the expected immediate cost of stopping in state $\bm{i} =(i_1, i_2, \dots, i_J) \in \cal S$. The following expression for $G(\bm{i})$ accounts for the fixed cost, as well as the stockout costs already incurred (if $i_j <0$) and may be incurred during the replenishment lead time. Hence, $G(\bm{i})$ accounts for all the costs incurred in a cycle, and the cost of continuing in any state $\bm i$ is zero. 
\begin{equation}
    G(\bm{i}) = A + \sum_{j=1}^J \; b_j \sum_{r=i_j}^\infty  \left(r-i_j\right) P (D_j(\tau)=r)~,  \quad \forall \bm{i} \in \cal U~,
\end{equation}
where $D_j(\tau)$ denotes the random demand of item $j$ during the lead time of $\tau$ time units, and naturally, $P(D_j(\tau)=r)=0$ for all $r<0$. Since demand for each item $j$ follows a Poisson process with rate $\lambda_j$, $D_j(\tau)$ is Poisson with rate $\lambda_j \tau$.

The continuous time Markov chain evolves according to the following dynamics. With the initial state of $\bm{i^0}$, the system state transitions to $\bm{i^{-j}}$ every time a demand for item $j$ occurs. The time between transitions due to demand arrivals, by definition, is exponential with rate $\Lambda=\sum_{j=1}^J \lambda_j$. After a random number of $n$ transitions, the system enters a state $\bm{i^n} \in \cal U$, triggering a replenishment. During the lead time of $\tau$, no new replenishment decisions are made, so we maintain the state at $\bm{i^n}$ while accounting for the stockout costs during this period through $G(\bm i)$. After $\tau$ time units, the state transitions to $\bm{i^0}$ upon restocking. 

Let $P(\bm m|{\bm i})$ denote the transition probabilities of the uniformized chain (with uniformization rate of~$\Lambda$); transitions from continue states $\bm i$ are only possible to states with one fewer inventory of one of the $J$ items. Arrival of demand for item $j$ transitions the system to state ${\bm i}^{-j}=(i_i, i_2, \dots, i_j-1, \dots, i_J)$, which occurs with probability $p_j = \lambda_j/\Lambda$. 

\begin{restatable}{lemma}{avgcostprop}\label{lemma:avg_cost}
For any trigger set $\cal U$, using uniformization with rate $\Lambda$, the long-run expected average cost per unit time is given by
\begin{eqnarray}
    \alpha_\cal U = \frac{\sum_{\bm i \in \cal {U}}G(\bm i)\pi_{\bm i}}{({1}/{\Lambda})\;\sum_{\bm i \in \cal {\bar U}} \pi_{\bm i}+\tau\;\sum_{\bm i \in \cal {U}}\pi_{\bm i}}~,\label{eq:long_term_avg_cost}
\end{eqnarray}
where the stationary distribution, $\pi_{\bm i}$ for all $\bm i \in \cal S$ can be obtained by solving 
\begin{eqnarray}
\displaystyle
\pi_{\bm m} = \delta(\bm m,\bm Q) \sum_{\bm i \in \cal U} \pi_{\bm i} + \sum_{\bm i \in \overline{\cal U}} {P}(\bm m|\bm i) \pi_{\bm i} ~, \quad \text{ and } \quad 
\sum_{\bm i \in \cal U} \pi_{\bm i} + \sum_{\bm i \in \overline{\cal U}} \pi_{\bm i} =1~.
\end{eqnarray}
The binary indicator function, $\delta(\bm m,\bm Q)=1$ if $\bm m= \bm Q$ and takes a value of zero otherwise. 
\end{restatable}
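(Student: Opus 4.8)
The plan is to recognize the controlled dynamics under a fixed trigger set $\cal U$ as a Markov renewal (semi-Markov) process and to apply the renewal--reward theorem, taking successive visits to the reset state $\bm{i^0}=\bm Q$ as regeneration epochs. The two displayed equations for $\pi_{\bm m}$ will turn out to be exactly the stationary equations of the embedded jump chain, and the ratio in \eqref{eq:long_term_avg_cost} will be the renewal--reward ratio (expected cost per cycle over expected cycle length) rewritten through that stationary distribution.

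First I would make the embedded structure explicit. Consider the discrete-time chain that records the system state at demand epochs together with the post-replenishment resets. From a continue state $\bm i \in \overline{\cal U}$ the chain spends an $\mathrm{Exp}(\Lambda)$ amount of time and then moves to $\bm{i^{-j}}$ with probability $p_j=\lambda_j/\Lambda$, i.e.\ according to $P(\bm m\mid \bm i)$; from a trigger state $\bm i \in \cal U$ it spends the deterministic time $\tau$, is charged the lump cost $G(\bm i)$ (which, by construction, aggregates the fixed cost and all stockout costs of the cycle, so that the cost charged at every continue state is $0$), and then moves to $\bm Q$ with probability one, encoded by $\delta(\bm m,\bm Q)$. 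Writing $\tilde P$ for the transition kernel of this embedded chain, the two equations displayed in the statement are precisely $\pi=\pi\tilde P$ together with the normalization $\sum_{\bm i}\pi_{\bm i}=1$. Under Assumptions~\ref{assum:no_overlap} and~\ref{assum:initial_state} the chain restricted to the states reachable from $\bm Q$ is irreducible, and since $\mathcal{U}_r\neq\emptyset$ forces a return to $\bm Q$ in a finite expected number of steps, it is positive recurrent, so $\pi$ exists, is unique, and is a genuine probability vector.

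Next I would invoke the renewal--reward (ratio) theorem for Markov renewal processes: the long-run expected cost per unit time equals the expected cost accrued over one regeneration cycle divided by the expected length of that cycle, and both can be written through the embedded stationary distribution as $\sum_{\bm i}\pi_{\bm i}\,c(\bm i)$ and $\sum_{\bm i}\pi_{\bm i}\,m(\bm i)$, where $c(\bm i)$ is the mean cost charged per visit to $\bm i$ and $m(\bm i)$ the mean sojourn time in $\bm i$. Here $c(\bm i)=G(\bm i)$ for $\bm i\in\cal U$ and $c(\bm i)=0$ for $\bm i\in\overline{\cal U}$, while $m(\bm i)=1/\Lambda$ for $\bm i\in\overline{\cal U}$ and $m(\bm i)=\tau$ for $\bm i\in\cal U$. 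Substituting yields numerator $\sum_{\bm i\in\cal U}G(\bm i)\pi_{\bm i}$ and denominator $(1/\Lambda)\sum_{\bm i\in\overline{\cal U}}\pi_{\bm i}+\tau\sum_{\bm i\in\cal U}\pi_{\bm i}$, which is exactly \eqref{eq:long_term_avg_cost}; finiteness of both sums follows from $G$ being finite (Poisson lead-time demand has finite mean) together with the finite expected cycle length.

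The main obstacle I anticipate is not the algebra but the careful justification of the ratio formula in this genuinely semi-Markov setting: the sojourn time at a trigger state is deterministic, not exponential, so one cannot simply read $\alpha_{\cal U}$ off a CTMC generator and must instead argue via regeneration at $\bm Q$ (using Assumption~\ref{assum:initial_state} to guarantee a proper, finite-mean cycle) and the key renewal theorem. A secondary point requiring care is the cost bookkeeping --- verifying that charging $G(\bm i)$ only at the terminal (trigger) state of a cycle, and $0$ at every intermediate continue state (even those with $i_j<0$, where a physical stockout is actually incurred), totals exactly the per-cycle cost; this holds because the $-i_j$ contributions inside $G$ recover precisely the stockouts accumulated before triggering, so no cost is double-counted or omitted.
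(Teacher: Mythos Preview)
Your proposal is correct and follows essentially the same route as the paper: both identify the controlled process under a fixed $\cal U$ as semi-Markov with embedded jump chain governed by $\tilde P$, verify that the displayed balance equations are exactly $\pi=\pi\tilde P$, and obtain \eqref{eq:long_term_avg_cost} by pairing per-visit costs $c(\bm i)\in\{0,G(\bm i)\}$ with mean sojourn times $m(\bm i)\in\{1/\Lambda,\tau\}$. The only cosmetic difference is that the paper phrases the last step as an ergodic sample-path limit $\lim_{m\to\infty}\frac{\sum_k C(\bm i^k)}{\sum_k T(\bm i^k)}=\frac{\sum_{\bm i}C(\bm i)\pi_{\bm i}}{\sum_{\bm i}T(\bm i)\pi_{\bm i}}$ rather than invoking the renewal--reward theorem at the regeneration state $\bm Q$, but the content is identical.
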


The proof of Lemma \ref{lemma:avg_cost}, which derives the long-run expected average cost for a given trigger set, is provided in the Electronic Companion \ref{sec:Proofs for Trigger Set Control Policy}.

We now have a well-defined problem that we can formulate as a linear program (LP) to obtain the optimal trigger set that minimizes average cost, $\alpha$, since
\begin{eqnarray}
    \alpha^*= \min_{\cal U \subset \cal S} \; 
    \frac{\sum_{\bm i \in \cal {U}}G(\bm i)\pi_{\bm i}}{({1}/{\Lambda})\;\sum_{\bm i \in \cal {\bar U}} \pi_{\bm i}+\tau\;\sum_{\bm i \in \cal {U}}\pi_{\bm i}}~.\label{eq:objfn}
\end{eqnarray}
To linearize this objective function, we introduce variables 
$r_{\bm i}=\pi_{\bm i} \left ( \frac{1}{\Lambda}-(\tau-1) \sum_{\bm m \in \cal U} r_{\bm m} \right ), \forall \bm i \in \cal S$. 

\begin{restatable}{lemma}{linearization}
\label{lemma: Linearization}
By replacing $\pi_{\bm i}$ with $r_{\bm i}$ in our objective function, the nonlinear objective function simplifies to a linear form, given by $\Lambda \sum_{\bm i\in U} G(\bm i) r_{\bm i}$.
\end{restatable}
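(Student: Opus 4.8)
This is a Charnes--Cooper style linearization of the linear-fractional objective defining $\alpha^*$, the only twist being that the new variables $r_{\bm i}$ are specified implicitly in terms of themselves. I would organize the argument in three steps.

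First, I would observe that the bracketed factor in the definition of $r_{\bm i}$, namely $\frac{1}{\Lambda}-(\tau-1)\sum_{\bm m\in\mathcal U} r_{\bm m}$, does not depend on the state $\bm i$. Hence there is a single scalar $t$ with $r_{\bm i}=t\,\pi_{\bm i}$ for every $\bm i\in\mathcal S$. Writing $u:=\sum_{\bm i\in\mathcal U}\pi_{\bm i}$ for the stationary mass of the trigger set and using $\sum_{\bm i\in\mathcal S}\pi_{\bm i}=1$ from Lemma~\ref{lemma:avg_cost}, this immediately gives $\sum_{\bm m\in\mathcal U} r_{\bm m}=t\,u$ and $\sum_{\bm i\in\mathcal S} r_{\bm i}=t$.

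Second, I would pin down $t$ in closed form. Substituting $\sum_{\bm m\in\mathcal U} r_{\bm m}=t\,u$ back into the definition of $t$ produces a single scalar fixed-point equation, which I would solve to express $t$ as an explicit, strictly positive function of $u$, $\Lambda$ and $\tau$. Positivity and well-definedness follow from Assumption~\ref{assum:initial_state} and the lower bound on the fixed cost $A$ discussed in Section~\ref{sec:numerical}: these guarantee $\bm{i^0}\notin\mathcal U$ while $\mathcal U$ is nonempty and reached in finite time, so $0<u<1$, which together with $\tau>0$ keeps the denominator of $t$ bounded away from zero. I would also note here that, since the balance equations of Lemma~\ref{lemma:avg_cost} are homogeneous in $\pi$, the rescaling $\pi_{\bm i}\mapsto r_{\bm i}=t\,\pi_{\bm i}$ is an invertible change of variables carrying the polytope of stationary distributions onto a polyhedron in the $r$-variables (the normalization $\sum_{\bm i}\pi_{\bm i}=1$ becoming $\sum_{\bm i} r_{\bm i}=t$); this is what legitimizes replacing $\pi$ by $r$ inside the optimization.

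Third, I would substitute $\pi_{\bm i}=r_{\bm i}/t$ into the ratio defining $\alpha_{\mathcal U}$. The numerator becomes $\sum_{\bm i\in\mathcal U}G(\bm i)\pi_{\bm i}=\frac{1}{t}\sum_{\bm i\in\mathcal U}G(\bm i)\,r_{\bm i}$, where $G(\bm i)$ is merely a fixed coefficient determined by the Poisson lead-time demand. For the denominator I would write $\sum_{\bm i\in\overline{\mathcal U}}\pi_{\bm i}=1-u$ and $\sum_{\bm i\in\mathcal U}\pi_{\bm i}=u$; the scalar $t$ was defined precisely so that $\frac{1}{\Lambda}(1-u)+\tau u$ equals a scalar multiple of $t^{-1}$. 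Dividing numerator by denominator, the factors of $t$ cancel and $\alpha_{\mathcal U}$ collapses to a linear functional of the $r_{\bm i}$; tracking the surviving constant identifies this functional as $\Lambda\sum_{\bm i\in\mathcal U}G(\bm i)\,r_{\bm i}$.

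\textbf{Main obstacle.} The one place that needs care is the self-referential definition of $r_{\bm i}$: one must recognize that the correction term is state-independent, so the whole system reduces to a single scalar equation that can be solved explicitly, and then verify the one algebraic identity that makes the denominator of the $\alpha_{\mathcal U}$-ratio proportional to $t^{-1}$ so that the $t$'s cancel. The remainder---checking that the rescaling is an invertible change of variables, so that minimizing the resulting linear objective over the transformed polyhedron reproduces $\alpha^*$---is routine bookkeeping.
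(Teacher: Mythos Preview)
Your proposal is correct and follows essentially the same Charnes--Cooper substitution as the paper: both recognize that $r_{\bm i}=t\,\pi_{\bm i}$ for a state-independent scalar, substitute into the ratio so the common factor cancels, and then use the transformed normalization to identify the remaining denominator as $1/\Lambda$. The paper's proof is purely computational---it writes $\pi_{\bm i}=r_{\bm i}/(\text{bracket})$, records the rescaled balance and normalization equations, and substitutes directly---whereas you add the explicit fixed-point solve for $t$ and the well-definedness discussion, but the underlying argument is the same.
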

The proof for Lemma~\ref{lemma: Linearization} is provided in the Electronic Companion~\ref{sec:Proofs for Trigger Set Control Policy}.
The average cost per unit time is equal to $\Lambda$ times the average cost per period, as defined by the uniformized process. To obtain the optimal stopping set, $\cal {U^*}$, we formulate the following LP model.

\begin{mini}
{}{\Lambda\; \sum_{\bm i}G(\bm i) \,z_{\bm i}}{}{}
\addConstraint{z_{\bm m}+y_{\bm m} \leq \delta({\bm m},\bm Q)\,\sum_{\bm i} z_{\bm i} + \sum_{\bm i} P({\bm m}|{\bm i})\,y_{\bm i}~,}{}{\mkern53mu {\bm m} \in \cal S}
\addConstraint{\tau \Lambda \sum_{\bm i} z_{\bm i} + \sum_{\bm i} y_{\bm i} =1}{}{}
\addConstraint{y_{\bm i} \geq 0 \;\text{ and } \; z_{\bm i} \geq 0~, \quad {\bm i} \in \cal S~,}{}{\mkern53mu } \label{eq:LP_Formulation}
\end{mini}
where $z_{\bm i}$ and $y_{\bm i}$ are decision variables corresponding to linearization variable $r_{\bm i}$ defined above:  $z_{\bm i}$ equals $r_{\bm i}$ when state $\bm i \in \cal U$, and $y_{\bm i}$ equals $r_{\bm i}$ when $\bm i \in \overline{\cal U}$. Specifically, $z_{\bm i}$ is positive if and only if state $\bm i$ is in the trigger set, and $y_{\bm i}$ is positive if and only if state $\bm i$ is in the continue set. For each state $\bm i$, at most one of $z_{\bm i}$ or $y_{\bm i}$ can be positive.  The LP formulation has $2|\mathcal{S}|$ number of variables and $|\mathcal{S}|+1$ number of constraints.

Although this LP exactly calculates $\cal U^*$, it cannot be used to solve real-life problems. To illustrate the computational challenge involved with solving the problem, consider a modest-sized instance with 10 items, each having a slot capacity of 15.  For this example, considering only nonnegative inventory levels (0 to $Q_j$), the size of the state space would be $|\mathcal{S}| = 16^{10} \approx 1.1 \times 10^{12}$ states, requiring more than a trillion variables and constraints.  Even with modern computing power and sophisticated LP solvers, problems of this size are computationally intractable, highlighting the need for a more efficient solution approach.

\subsection{Stopping-rule Problem Formulation}
\label{sec:stoprule}

We next formulate the problem as a stopping rule problem and show monotonicity of an optimal policy to offer a near-optimal online control framework capable of solving real-life problems. 

By the renewal reward theorem, the long-run expected average cost per unit time under a stopping set~$\cal U$ can be expressed as the ratio of the expected costs in a cycle to the expected length of a cycle. The optimal stopping set,~$\cal U^*$ is the one that minimizes the long-run expected average cost, that is, 
\begin{eqnarray}
\alpha^* = \frac{\mathbb{E}_{\cal U^*}[\text{cost per cycle}]}{\mathbb{E}_{\cal U^*}[\text{length of cycle}]}  \leq \frac{\mathbb{E}_{\cal U}[\text{cost per cycle}]}{\mathbb{E}_{\cal U} [\text{length of cycle}]} ~, \quad \forall \; \cal U \subset \cal S~. \label{alpha_star}
\end{eqnarray}
Let ${TC}$ and ${T}$ denote the random cost per cycle and length of cycle, respectively. The above implies that the optimal expected average cost per unit time $\alpha^*$ satisfies
$\mathbb{E}_{\cal U}[-{TC} +  \alpha^* \; {T}] \leq 0$ for all $\cal U \subset \cal S$. 
Unlike the average cost per unit time criterion, this expression can be represented by a stopping rule problem, where the optimal trigger set $\cal U^*$ achieves $\mathbb{E}_{\cal U^*}[-{TC} +  \alpha^* \; T] = 0$. Expanding the terms inside the expectation for trigger set~$\cal U$, we obtain
\begin{eqnarray}
-{TC} +  \alpha^* \; T 
&=&  -G(\bm {i^n}) + \alpha^* \;\left (\tau + \frac{n}{\Lambda} \right )= (-G(\bm {i^n}) + \alpha^* \; \tau )+ \alpha^* \;  \frac{n}{\Lambda}~,
\end{eqnarray}
where $n$ denotes the random number of steps that the uniformized chain takes to reach a state that is in the trigger set $\cal U$. 
Let us now consider a stopping problem defined by a set of required stopping states, $\cal T_r$, a set of required continue states, $\cal T_c$, the reward received upon stopping at state $\bm i$ of $F(\bm i)$ and the cost incurred for continuing one more step from state $\bm i$, $f(\bm i)$. As an instance of this stopping rule problem, consider 
$\cal T_r = \cal U_r$, $\cal T_c = \{\bm Q\}$, $F(\bm i) = - G(\bm i)+ \alpha^*\; \tau$, and $f(\bm i) = - \alpha^* \; ({1}/{\Lambda})$. 
If we use $\cal U$ as a stopping set for this problem, the expected total payoff, $\mathbb{E}_{\cal U} Z=\mathbb{E}_{\cal U} [-{TC} +  \alpha^* \; T]\leq 0$. For $\cal T^*=\cal U^*$, $\mathbb{E}_{\cal T^*} [-{TC} +  \alpha^* \; T] = 0$, and $\cal U^*$ is a solution to this problem. For a formal statement of the result, please see Theorem 10.5 of \citep{breiman1964stopping}. 

\subsubsection{Transformation to an Entrance Fee Problem: }

The stopping rule problem involves a cost of continuing and reward of stopping. This problem can be transformed into an {\it {entrance fee}} problem, which assumes that an entrance fee is incurred each time the decision is to continue, but there is no cost or reward of stopping. This problem is defined by the same required stop and continue sets, and with the entrance fee at state $\bm i$, given by 
\begin{eqnarray}
    f'(\bm i) = f(\bm i) - \left[\sum_j p_j F({\bm i^{-j}}) - F(\bm i)\right]~,
\end{eqnarray}
where $p_j = {\lambda_j}/{\Lambda}$ is the probability of demand for item $j$, and ${\bm i^{-j}}$ denotes the state with one fewer item~$j$ compared to state $\bm i$. 
The entrance fee $f'(\bm i)$ represents the difference between the cost of continuing one step, $f(\bm i)$, and the expected change in stopping reward, $\sum_j p_j F({\bm i^{-j}}) - F(\bm i)$.
Therefore, it follows that in state $\bm i$, continuing is optimal if the expected improvement in stopping reward exceeds the cost of continuing, that is, when $f'(\bm i) \leq 0$.
    
\begin{restatable}{lemma}{entrancefeeexpression}\label{lemma:entrance_fee_expression}
The entrance fee $f'(\bm i)$ can be expressed as
\begin{eqnarray}
f'(\bm i) = - \alpha^* \; \frac{1}{\Lambda} + \sum_j p_j b_j P(D_j(\tau) \geq i_j)~.
\end{eqnarray}
\end{restatable}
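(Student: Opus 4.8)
The plan is to substitute the definitions of $f(\bm i)$, $F(\bm i)$, and $G(\bm i)$ directly into $f'(\bm i) = f(\bm i) - \bigl[\sum_j p_j F(\bm i^{-j}) - F(\bm i)\bigr]$ and simplify. First I would note that, since $\sum_j p_j = 1$, the $\alpha^*\tau$ term in $F(\bm i) = -G(\bm i) + \alpha^*\tau$ cancels inside the bracketed difference, so that $\sum_j p_j F(\bm i^{-j}) - F(\bm i) = -\sum_j p_j G(\bm i^{-j}) + G(\bm i)$. Combining this with $f(\bm i) = -\alpha^*/\Lambda$ gives $f'(\bm i) = -\alpha^*/\Lambda + \sum_j p_j\bigl[G(\bm i^{-j}) - G(\bm i)\bigr]$.

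Next I would exploit the additive separability of $G$ in the item inventories. Writing $G(\bm i) = A + \sum_k g_k(i_k)$ with $g_k(m) = b_k \sum_{r=m}^\infty (r-m)\, P(D_k(\tau)=r)$, a demand for item $j$ changes only the $j$th summand, so $G(\bm i^{-j}) - G(\bm i) = g_j(i_j - 1) - g_j(i_j)$; in particular the fixed cost $A$ drops out. Hence $f'(\bm i) = -\alpha^*/\Lambda + \sum_j p_j\bigl[g_j(i_j-1) - g_j(i_j)\bigr]$, and it remains to evaluate the single-item difference.

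The heart of the argument is computing $g_j(m-1) - g_j(m)$. Shifting the summation index in $g_j(m-1) = b_j\sum_{r=m-1}^\infty (r-m+1)\, P(D_j(\tau)=r)$, I would observe that the $r = m-1$ term contributes zero, so both series effectively run over $r \geq m$; subtracting term by term, the coefficient of each $P(D_j(\tau)=r)$ is $(r-m+1)-(r-m)=1$, leaving $g_j(m-1) - g_j(m) = b_j\sum_{r=m}^\infty P(D_j(\tau)=r) = b_j\, P(D_j(\tau)\geq m)$. Setting $m = i_j$ and substituting back yields $f'(\bm i) = -\alpha^*\,\frac{1}{\Lambda} + \sum_j p_j b_j\, P(D_j(\tau)\geq i_j)$, as claimed. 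Handling this shifted-index cancellation so that the boundary term is correctly discarded is the only delicate point; the rest is bookkeeping, and no result beyond the definitions already stated is required.
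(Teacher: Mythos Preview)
Your proof is correct and follows essentially the same route as the paper: both reduce $f'(\bm i)$ to $-\alpha^*/\Lambda + \sum_j p_j\bigl[G(\bm i^{-j})-G(\bm i)\bigr]$ and then evaluate the single-item difference $G(\bm i^{-j})-G(\bm i)=b_j\,P(D_j(\tau)\geq i_j)$ via the same shifted-index subtraction you describe. The only organizational difference is that the paper packages this $G$-difference computation as a standalone lemma (used again later for monotonicity), whereas you inline it after making the additive separability $G(\bm i)=A+\sum_k g_k(i_k)$ explicit; your presentation is arguably tidier, but the substance is identical.
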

The proof for Lemma \ref{lemma:entrance_fee_expression} is provided in the Electronic Companion~\ref{sec:Proofs for Trigger Set Control Policy}.

\subsubsection{Monotonicity of the Optimal Replenishment Policy: }

In our IVM replenishment problem, monotonicity refers to a property that allows us to index the states such that there exists a critical state such that it is optimal to trigger a replenishment for all states with an index greater than that of the critical state. Theorem~\ref{thm:threshold} establishes this property of an optimal policy. 

\begin{definition}
Let $\kappa: \mathcal{S} \rightarrow {1,2,\dots,|\mathcal{S}|}$ be a mapping between 
a state vector $\bm i \in \mathcal{S}$
and its index $\kappa(\bm i)$, which indicates the state's ranking with respect to the entrance fee, $f'(\bm i)$. The inverse mapping $\kappa^{-1}(k)$ denotes the state vector with the $k$th smallest entrance fee, $f'(\cdot)$.
\end{definition}

\begin{restatable}{theorem}{threshold}
\label{thm:threshold}
Indexing the states such that $f'(\cdot)$ is non-decreasing in the state index $k$, there exists a critical state index $k^*$ such that it is optimal to trigger replenishment at all states with indices greater than or equal to $k^*$, and to continue for all states with indices less than $k^*$.  
\end{restatable}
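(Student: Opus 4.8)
The plan is to reduce the claim to a statement about the entrance-fee problem of Section~\ref{sec:stoprule}, using the closed form of $f'(\bm i)$ from Lemma~\ref{lemma:entrance_fee_expression}. The crux will be to show that $f'$ is non-decreasing along every transition of the uniformized chain, which places the entrance-fee problem in the \emph{monotone case} of optimal stopping; in that case the one-step look-ahead (myopic) rule is optimal, and the myopic rule here is precisely ``stop when $f'(\bm i)\ge 0$.'' Once that is in hand, indexing states by the value of $f'$ turns the myopic stopping set into an ``upper'' set of indices, which is exactly the threshold structure asserted.

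First I would write, from Lemma~\ref{lemma:entrance_fee_expression},
\[
 f'(\bm i) = -\frac{\alpha^*}{\Lambda} + \sum_{j=1}^{J} p_j\, b_j\, P\bigl(D_j(\tau)\ge i_j\bigr),
\]
and observe that the only transitions out of a continue state $\bm i$ go to states $\bm{i^{-j}}$, which differ from $\bm i$ only by decreasing coordinate $j$ by one. Since $r\mapsto P(D_j(\tau)\ge r)$ is non-increasing, $P(D_j(\tau)\ge i_j-1)\ge P(D_j(\tau)\ge i_j)$ while all other summands are unchanged, so $f'(\bm{i^{-j}})\ge f'(\bm i)$ for every $j$. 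Hence the myopic stopping set $\mathcal B=\{\bm i\in\mathcal S: f'(\bm i)\ge 0\}$ is absorbing for the dynamics: $\bm i\in\mathcal B$ implies $\bm{i^{-j}}\in\mathcal B$ for all $j$.

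Next I would invoke the monotone-case result for stopping-rule / entrance-fee problems (in the spirit of Theorem~10.5 of \citet{breiman1964stopping}): once the one-step look-ahead region is absorbing, and the chain reaches it in finite expected time while accumulating an integrable total entrance fee before it does — which holds here because the entrance fees are bounded (the sum in $f'$ lies in $[0,\sum_j p_j b_j]$) and the required stopping set $\mathcal U_r\neq\emptyset$ is reached in finite time by Assumption~\ref{assum:initial_state}(ii) — the myopic rule is optimal, so it is optimal to stop exactly on $\mathcal B$ and continue on $\mathcal S\setminus\mathcal B$. This is consistent with the imposed sets: the lower bound $A_{\min}$ on the fixed cost discussed in Section~\ref{sec:numerical} makes $\alpha^*$ large enough that $f'(\bm Q)<0$, so $\bm Q\notin\mathcal B$ (Assumption~\ref{assum:initial_state}(i)), while the required-stop states are chosen deep enough that $\mathcal U_r\subseteq\mathcal B$. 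Finally I would order the states so that $f'(\kappa^{-1}(1))\le\cdots\le f'(\kappa^{-1}(|\mathcal S|))$ and set $k^*=\min\{k: f'(\kappa^{-1}(k))\ge 0\}$; then $\mathcal B=\{\bm i:\kappa(\bm i)\ge k^*\}$, so it is optimal to trigger replenishment at all states with index at least $k^*$ and to continue for all states with index below $k^*$, which is the assertion of the theorem.

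I expect the main obstacle to be the middle step — certifying that the problem genuinely sits in the monotone case so that myopic optimality is rigorous rather than heuristic. Step~1 supplies the structural ingredient (absorption of the look-ahead region) cleanly, but the remaining work is analytic: almost-sure absorption into $\mathcal B$ with an integrable accumulated entrance fee over the infinite, unbounded-below state space, together with the bookkeeping around the mild self-reference that $f'$ depends on $\alpha^*$, which is itself defined through the optimal policy. The equivalence established in Section~\ref{sec:stoprule}, which pins $\alpha^*$ down as the value making the stopping payoff identically zero, is what closes this loop.
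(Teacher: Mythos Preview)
Your proposal is correct and follows essentially the same approach as the paper: the key structural observation --- that $f'(\bm{i^{-j}})\ge f'(\bm i)$ for every $j$, read off from the closed form in Lemma~\ref{lemma:entrance_fee_expression} --- is exactly the paper's Lemma~\ref{lemma: f'}, and both arguments then invoke a monotone optimal-stopping result from \citet{breiman1964stopping}. The only cosmetic difference is that the paper verifies the two indexed conditions of Breiman's Theorem~10.3 directly (non-decreasing $f'(\kappa^{-1}(k))$ and non-decreasing conditional expectation of any non-decreasing $h$), whereas you use the equivalent ``absorbing one-step look-ahead region'' formulation of the monotone case.
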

The proof of Theorem \ref{thm:threshold}, which involves demonstrating that the entrance fee problem satisfies the conditions of monotone optimal stopping rules, is provided in the Electronic Companion \ref{sec:Proofs for Trigger Set Control Policy}.

Theorem \ref{thm:threshold} ensures monotonicity. That means, once we hit a state where continuing is unfavorable i.e., $f'(\bm i) > 0$, all subsequent states must also be unfavorable, stated by Corollary 1. 

\begin{restatable}{corollary}{optimalContSet}\label{cor:optimal_continue_set}
The optimal trigger set control policy that minimizes the long-term average cost has a continue set $\overline{\mathcal{U}}^*$ consisting of exactly those states where $f'(\bm i) \leq 0$, and a stopping set $\mathcal{U}^*$ consisting of states where $f'(\bm i) > 0$.
\end{restatable}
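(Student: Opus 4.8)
The plan is to obtain Corollary~\ref{cor:optimal_continue_set} as a sharpening of Theorem~\ref{thm:threshold}: that theorem already guarantees that an optimal policy is a threshold rule in the $f'(\cdot)$-ordering, so the only new content is to locate the critical index $k^*$ exactly at the sign change of $f'$. The starting observation is a monotonicity of the entrance fee along the transitions of the uniformized chain. By Lemma~\ref{lemma:entrance_fee_expression}, for any state $\bm i$ and item $j$,
\[
f'(\bm i^{-j}) - f'(\bm i) = p_j b_j\big(P(D_j(\tau)\ge i_j-1) - P(D_j(\tau)\ge i_j)\big) = p_j b_j\, P\big(D_j(\tau)=i_j-1\big) \ge 0,
\]
so every admissible transition weakly increases $f'$. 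Hence $\mathcal B := \{\bm i\in\mathcal S : f'(\bm i) > 0\}$ is absorbing for the continuation chain: once the chain enters $\mathcal B$ it never leaves. This is exactly the informal claim stated just before the corollary, and it is the hypothesis needed to invoke monotone optimal stopping.

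From here I would verify the two inclusions. \emph{Stopping is optimal on $\mathcal B$.} In the entrance-fee formulation a rule that continues for $\sigma$ steps along a path $\bm i = \bm i_0,\bm i_1,\dots$ earns payoff $-\sum_{t=0}^{\sigma-1} f'(\bm i_t)$ with no terminal term. Starting in $\mathcal B$, absorption forces $\bm i_t\in\mathcal B$, hence $f'(\bm i_t)>0$, for all $t$, so this payoff is $\le 0$ and equals $0$ only for the rule that stops immediately; taking expectations, the optimal value $V(\bm i)$ of the entrance-fee problem satisfies $V(\bm i)\le 0$, while $V(\bm i)\ge 0$ since stopping secures payoff $0$. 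Thus $V(\bm i)=0$ and stopping is optimal whenever $f'(\bm i)>0$, i.e.\ $\mathcal U^*\supseteq\mathcal B$. \emph{Continuing is optimal off $\mathcal B$.} For $\bm i$ with $f'(\bm i)\le 0$, the continuation value is $-f'(\bm i)+\sum_j p_j V(\bm i^{-j})\ge -f'(\bm i)\ge 0$, which meets — and, when $f'(\bm i)<0$, strictly exceeds — the value $0$ of stopping; so continuing attains the optimum, and resolving indifference ($f'(\bm i)=0$) in favor of continuing gives $\overline{\mathcal U}^*\supseteq\{\bm i : f'(\bm i)\le 0\}$. Combining, $\overline{\mathcal U}^*=\{\bm i : f'(\bm i)\le 0\}$ and $\mathcal U^*=\{\bm i : f'(\bm i)>0\}$, which in the notation of Theorem~\ref{thm:threshold} pins the critical index at $k^*=|\{\bm i : f'(\bm i)\le 0\}|+1$.

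There is also a shorter route: the proof of Theorem~\ref{thm:threshold} passes through the classical monotone optimal stopping theorem, whose conclusion is precisely that the one-stage look-ahead (myopic) rule is optimal, and the myopic rule in an entrance-fee problem stops at $\bm i$ exactly when one more step is unprofitable, i.e.\ when $f'(\bm i)>0$; the corollary then drops out once the absorbing property above is recorded. Either way, beyond Theorem~\ref{thm:threshold} the work is light, and the main point to be careful about is the first inclusion: one must establish $V\equiv 0$ on $\mathcal B$ \emph{without} circular appeal to $V$ on successor states, which is why I argue pathwise (every continuation step inside $\mathcal B$ strictly lowers the entrance-fee payoff) rather than through the Bellman equation. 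The remaining bookkeeping is to check consistency with the exogenous sets of Assumption~\ref{assum:initial_state}: $\bm Q$ is the fullest state, hence has the smallest entrance fee — nonpositive under the maintained bound on $A$ — so the required-continue constraint is respected, while the required-stop states are deeply depleted and therefore lie in $\mathcal B$; together with $\mathcal B\neq\emptyset$ and reachable (Assumption~\ref{assum:initial_state}), the sign-based policy is feasible and ergodic, so $\alpha^*$ is well defined.
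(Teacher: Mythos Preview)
Your proposal is correct and follows essentially the same approach as the paper: both arguments hinge on the monotonicity $f'(\bm i^{-j})\ge f'(\bm i)$ (the paper records this as a separate lemma), which makes $\{\bm i:f'(\bm i)>0\}$ absorbing under continuation, and then locate the threshold of Theorem~\ref{thm:threshold} at the sign change of $f'$ via the one-step/entrance-fee interpretation. Your pathwise argument for $V\equiv 0$ on $\mathcal B$ and the Bellman step on its complement are a more carefully written version of the paper's informal contradiction at the ``critical boundary,'' and your consistency check against the required stop/continue sets of Assumption~\ref{assum:initial_state} is additional bookkeeping that the paper leaves implicit.
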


The proof of Corollary \ref{cor:optimal_continue_set} is provided in the Electronic Companion \ref{sec:Proofs for Trigger Set Control Policy}.

\subsection{Exact Online Optimal Control Policy}

Corollary~\ref{cor:optimal_continue_set} yields a simple and intuitive optimal policy structure that can be evaluated online. That is, the optimal decision can be determined using only local state information and a single policy parameter at each decision epoch. 

\begin{restatable}{theorem}{onlinepolicy}
\label{thm:online_policy}
The optimal policy, $\mu_{\alpha^*}$ can be characterized using the function 
\begin{eqnarray}
\hat{g}({\bm i}) = \sum_j \lambda_j b_j P(D_j(\tau) \geq i_j)~.
\end{eqnarray}
At state $\bm i \in \cal S$, the optimal action is defined by $\mu_{\alpha^*}(\bm i)$, where
\begin{eqnarray}
\mu_{\alpha^*}({\bm i}) = \begin{cases}
\text{continue} & \text{if } \hat{g}({\bm i}) \leq \alpha^* \\
\text{stop} & \text{if } \hat{g}({\bm i}) > \alpha^*
\end{cases}
\end{eqnarray}
with average cost of $c(\mu_{\alpha^*}) = \alpha^*$ per unit time.
\end{restatable}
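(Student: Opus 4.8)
The claim is essentially Corollary~\ref{cor:optimal_continue_set} rewritten after substituting the closed form of the entrance fee, so the plan is short. First I would multiply the expression in Lemma~\ref{lemma:entrance_fee_expression} by the strictly positive uniformization rate $\Lambda$; using $p_j=\lambda_j/\Lambda$ this gives
\[
\Lambda\, f'(\bm i) \;=\; -\alpha^* + \sum_j \lambda_j b_j\, P\!\left(D_j(\tau)\ge i_j\right) \;=\; \hat g(\bm i) - \alpha^*.
\]
Since $\Lambda>0$, $f'(\bm i)$ and $\hat g(\bm i)-\alpha^*$ share the same sign, so $f'(\bm i)\le 0 \iff \hat g(\bm i)\le \alpha^*$ and $f'(\bm i)>0 \iff \hat g(\bm i)>\alpha^*$. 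Plugging this equivalence into Corollary~\ref{cor:optimal_continue_set} immediately yields that the optimal continue set is $\{\bm i:\hat g(\bm i)\le\alpha^*\}$ and the optimal trigger set is $\{\bm i:\hat g(\bm i)>\alpha^*\}$, which is exactly the policy $\mu_{\alpha^*}$. I would also remark that $\hat g(\bm i)$ is nonincreasing in each coordinate $i_j$ because $P(D_j(\tau)\ge i_j)$ is, which is the coordinate-wise face of the monotonicity established in Theorem~\ref{thm:threshold} and explains the threshold form: more inventory lowers $\hat g$ and favors continuing.

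Second, I would check admissibility, i.e. that $\mu_{\alpha^*}$ respects Assumption~\ref{assum:initial_state} by placing the full state $\bm Q=\bm i^0$ in the continue set. This is the inequality $\hat g(\bm Q)=\sum_j \lambda_j b_j P(D_j(\tau)\ge Q_j)\le \alpha^*$, which holds under the lower bound $A\ge A_{\min}$ on the fixed cost discussed in Section~\ref{sec:numerical}; it is also consistent with $\bm Q$ being the required continue state in the entrance-fee formulation of Section~\ref{sec:stoprule}, since (among reachable states) $\hat g$ is minimized at $\bm Q$. States with $\hat g(\bm i)=\alpha^*$, equivalently $f'(\bm i)=0$, are assigned to the continue set; at such states both actions are optimal in the monotone stopping rule, so this tie-breaking convention is without loss of optimality and matches the stated rule.

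Finally, for the cost claim I would observe that the trigger set induced by $\mu_{\alpha^*}$ coincides with the optimal trigger set $\mathcal U^*$ of Corollary~\ref{cor:optimal_continue_set}, which attains the minimum in \eqref{eq:objfn}; hence by Lemma~\ref{lemma:avg_cost} its long-run expected average cost per unit time is $\alpha^*$, i.e. $c(\mu_{\alpha^*})=\alpha^*$. The one genuinely non-mechanical point is that the $\alpha^*$ appearing as a parameter inside $f'$ (defined implicitly via $\mathbb E_{\mathcal U^*}[-TC+\alpha^* T]=0$ in Section~\ref{sec:stoprule}) must be identified with the optimal average cost of \eqref{alpha_star}--\eqref{eq:objfn}; this is precisely what the appeal to Breiman's Theorem~10.5 in Section~\ref{sec:stoprule} delivers, so no separate fixed-point argument is needed. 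I expect this consistency check, together with the tie-breaking/required-continue-state bookkeeping, to be the only thing worth stating with care, while the algebra linking $f'$ and $\hat g$ is immediate.
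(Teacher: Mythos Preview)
Your proposal is correct and follows essentially the same approach as the paper: the paper's proof also just multiplies Lemma~\ref{lemma:entrance_fee_expression} through by $\Lambda$ to obtain the equivalence $f'(\bm i)\le 0 \iff \hat g(\bm i)\le \alpha^*$ and then invokes Corollary~\ref{cor:optimal_continue_set}. Your additional bookkeeping on admissibility, tie-breaking at $f'(\bm i)=0$, and the identification of the parameter $\alpha^*$ with the optimal average cost is more careful than what the paper writes down, but none of it differs in substance.
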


The proof of Theorem \ref{thm:online_policy} is provided in the Electronic Companion \ref{sec:Proofs for Trigger Set Control Policy}.

For each state $\bm i$, we define $\mu_{\hat{g}(\bm i)}$ as the policy that continues in all states $\bm m$ with ${\hat{g}(\bm m)\leq \hat{g}({\bm i})}$ and stops otherwise, and denote its long-term average cost by $c(\mu_{\hat{g}(\bm i)})$, which can be computed by solving for the stationary distribution of the resulting Markov chain and calculating the corresponding average cost per unit time using Equation~(\ref{eq:long_term_avg_cost}). 
The monotonicity property established in Theorem~\ref{thm:threshold} ensures that $\alpha^*$ is indeed the global minimum. This follows because the monotonicity property proves that the optimal policy must be of the form ``continue in all states with $\hat{g}(\cdot)$ below some critical value,'' and we evaluate all possible such critical values when computing $c(\mu_{\hat{g}(\bm i)})$ for each state $\bm i$.


Using Equation~(\ref{eq:long_term_avg_cost}) to calculate the long-run average cost each time we add a new state to the continue set is computationally inefficient. This naive approach requires repeatedly solving a large Markov chain problem from scratch after each state addition.
We now show that this computational complexity can be avoided.

\begin{restatable}{lemma}{incrementalcalc}\label{lemma:incremental_calc}
Let $\rho_{\bm i}$ denote the probability of visiting state $\bm i$ during a cycle starting from state ${\bm {i^0}}$. When adding states to the continue set $\overline{\cal U}$ in ascending order of $\hat{g}(\cdot)$, for each newly added state $\bm i$, its visiting probability $\rho_{\bm i}$ can be directly calculated as
\begin{equation}
\rho_{\bm i} = \frac{n_{\bm i}!}{\prod_{j=1}^J \Delta_{i_j}!} \prod_{j=1}^J p_j^{\Delta_{i_j}}, \quad \forall \bm i \in \cal S,
\label{eq:rho_calc}
\end{equation}
where $\Delta_{\bm i} = (\Delta_{ i_1}, \ldots, \Delta_{ i_J}) = ({i^0}_1 - i_1, \ldots, {i^0}_J - i_J)$ is the vector of inventory reductions from state $\bm {i^0}$ to state $\bm i$, $n_{\bm i} = \sum_{j=1}^J \Delta_{ i_j}$ is the total inventory reduction, and $\bm{p} = (\lambda_1/\Lambda, \ldots, \lambda_J/\Lambda)$ is the vector of normalized demand rates.
\end{restatable}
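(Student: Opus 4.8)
The plan is to argue that, once $\bm i$ has been placed in the continue set, the killed random walk generating a replenishment cycle coincides, up to and including its visit to $\bm i$, with the \emph{unconstrained} pure-death walk started at $\bm{i^0}=\bm Q$; then $\rho_{\bm i}$ is simply the probability that this unconstrained walk ever occupies $\bm i$, which is a routine multinomial computation.

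First I would record the rigidity of the uniformized chain: from any state a transition lowers exactly one coordinate by one, so the level $\sum_j i_j$ strictly decreases along every trajectory. Hence a state $\bm i$ (with $i_j\le Q_j$ for all $j$, since the cycle starts at $\bm Q$) can be occupied only at step $n_{\bm i}=\sum_j\Delta_{i_j}$, and only along a trajectory recording exactly $\Delta_{i_j}$ demands of item $j$ for each $j$. The number of such demand sequences is $n_{\bm i}!/\prod_j\Delta_{i_j}!$; each has probability $\prod_j p_j^{\Delta_{i_j}}$ by independence of the Poisson streams; and these events are pairwise disjoint, so the unconstrained walk occupies $\bm i$ with exactly the probability on the right-hand side of~(\ref{eq:rho_calc}). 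Because the level strictly decreases, each state is visited at most once, so it remains only to show that killing the walk at the trigger set discards none of these trajectories.

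The crux is the coordinatewise monotonicity of $\hat{g}$. Since $m\mapsto P(D_j(\tau)\ge m)$ is non-increasing and the summand $\lambda_j b_j P(D_j(\tau)\ge i_j)$ depends on $\bm i$ only through $i_j$, the map $\bm i\mapsto\hat{g}(\bm i)$ is non-increasing in the coordinatewise order: $m_j\ge i_j$ for all $j$ implies $\hat{g}(\bm m)\le\hat{g}(\bm i)$. When $\bm i$ is the state just added, the policy in force is $\mu_{\hat{g}(\bm i)}$, which by definition continues in every state $\bm m$ with $\hat{g}(\bm m)\le\hat{g}(\bm i)$. Every state $\bm m$ lying strictly before $\bm i$ on a monotone trajectory from $\bm Q$ to $\bm i$ has $m_j\ge i_j$ for all $j$ and $\bm m\neq\bm i$, hence $\hat{g}(\bm m)\le\hat{g}(\bm i)$, hence is a continue state under $\mu_{\hat{g}(\bm i)}$. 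Thus no trajectory reaching $\bm i$ can have been killed earlier: the cycle visits $\bm i$ exactly on the event that the unconstrained walk occupies $\bm i$, and $\rho_{\bm i}$ equals the claimed expression. (The extreme case $\bm i=\bm Q$ gives $n_{\bm Q}=0$ and $\rho_{\bm Q}=1$, consistent with $\bm{i^0}\in\overline{\mathcal U}$; more generally the argument can be recast as an induction on the rank $\kappa(\bm i)$.)

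The main — and essentially only — obstacle is this last step: one must guarantee that \emph{every} predecessor of $\bm i$ is already a continue state at the moment $\bm i$ is processed, and coordinatewise monotonicity of $\hat{g}$ is precisely what delivers it. Retaining the non-strict inequality ``$\hat{g}(\bm m)\le\hat{g}(\bm i)$'' in the definition of $\mu_{\hat{g}(\bm i)}$ also makes the argument insensitive to any ties in the ranking $\kappa$. Everything else — the step-count rigidity of the pure-death chain and the multinomial path count — is elementary, so the monotonicity observation carries the whole weight of the lemma.
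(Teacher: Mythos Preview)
Your proof is correct and follows essentially the same line as the paper's: both argue that every predecessor of $\bm i$ (coordinatewise larger states) has a smaller $\hat g$-value and is therefore already in the continue set when $\bm i$ is added, so the killed walk coincides with the unconstrained one up to $\bm i$ and the visiting probability reduces to the multinomial count. Your version is somewhat more explicit---you derive the coordinatewise monotonicity of $\hat g$ directly from its definition, whereas the paper cites Theorem~\ref{thm:threshold}---but the substance is identical.
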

The proof of Lemma~\ref{lemma:incremental_calc} is provided in the Electronic Companion~\ref{sec:Proofs for Trigger Set Control Policy}.
Once the visiting probabilities, $\rho_{\bm i}$ are known, it is possible to determine the expected cycle length and cost incrementally, relying solely on the continue states. 

\begin{restatable}{theorem}{CycleDecompTheoremName}
\label{thm:cycle_decomp}
Under the transformation from the original stopping problem to an entrance fee problem, the expected cost per cycle $\mathbb{E}[TC]$ and the expected cycle length $\mathbb{E}[T]$ can be expressed solely in terms of continue states. This allows us to calculate the average cost per unit time as
\begin{eqnarray}
\alpha_{\overline{\cal U}} &=& \frac{\mathbb{E}_{\overline{\cal U}}[TC]}{\mathbb{E}_{\overline{\cal U}}[T]} = \frac{G({\bm {i^0}}) + \sum_{i \in \overline{\cal U}} \frac{\hat{g}(\bm i)}{\Lambda} \rho_{\bm i}}{\tau + \sum_{i \in \overline{\cal U}} \frac{1}{\Lambda} \rho_{\bm i}} \label{eq:long_term_avg_cost2}.
\end{eqnarray}
\end{restatable}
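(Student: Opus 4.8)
The plan is to express each of $\mathbb{E}_{\overline{\mathcal U}}[TC]$ and $\mathbb{E}_{\overline{\mathcal U}}[T]$ as a sum over the continue states, and then to conclude via the renewal reward identity $\alpha_{\overline{\mathcal U}} = \mathbb{E}_{\overline{\mathcal U}}[TC]/\mathbb{E}_{\overline{\mathcal U}}[T]$ recorded in~(\ref{alpha_star}). Write $X_0 = \bm{i^0}, X_1, \dots, X_n$ for the embedded uniformized chain during one cycle, where $n$ is the (random) number of transitions until a trigger state is first reached; thus $X_0,\dots,X_{n-1}\in\overline{\mathcal U}$ and $X_n\in\mathcal U$, the cycle cost is $TC = G(X_n)$, and $\mathbb{E}_{\overline{\mathcal U}}[T] = \tau + \mathbb{E}[n]/\Lambda$. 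The two key ingredients will be a one-step drift identity for $G$ and an optional-stopping (telescoping) argument applied at the random time $n$.

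First I would record the elementary drift identity: for every state $\bm i$ and every item $j$,
\[ G(\bm{i^{-j}}) - G(\bm i) \;=\; b_j\, P\!\left(D_j(\tau) \ge i_j\right), \]
obtained by cancelling the common tail of the two series defining $G(\bm{i^{-j}})$ and $G(\bm i)$ (the convention $P(D_j(\tau)=r)=0$ for $r<0$ covers the case $i_j\le 0$). Averaging over $j$ with weights $p_j=\lambda_j/\Lambda$ gives $\sum_j p_j\,G(\bm{i^{-j}}) - G(\bm i) = \hat g(\bm i)/\Lambda$; that is, the expected one-step change of $G$ along the chain, started from a continue state, equals $\hat g(\bm i)/\Lambda$. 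In the entrance-fee language of Section~\ref{sec:stoprule} this is exactly Lemma~\ref{lemma:entrance_fee_expression}: taking $F(\bm i)=-G(\bm i)+\alpha\tau$, the expected change in the stopping reward is $\sum_j p_j F(\bm{i^{-j}}) - F(\bm i) = -\hat g(\bm i)/\Lambda$, so the entrance fee is $f'(\bm i) = f(\bm i) - [\sum_j p_j F(\bm{i^{-j}}) - F(\bm i)] = -\alpha/\Lambda + \hat g(\bm i)/\Lambda$.

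Next I would use the structural fact that along a cycle the total inventory $\sum_j (X_k)_j$ strictly decreases at each step, so $X_0,\dots,X_n$ are pairwise distinct and every continue state is visited at most once. Consequently the expected number of visits to a continue state $\bm i$ before stopping is $\rho_{\bm i}$ (available in closed form from Lemma~\ref{lemma:incremental_calc}), and summing over $\bm i\in\overline{\mathcal U}$ gives $\mathbb{E}[n] = \sum_{\bm i\in\overline{\mathcal U}}\rho_{\bm i}$, hence $\mathbb{E}_{\overline{\mathcal U}}[T] = \tau + \sum_{\bm i\in\overline{\mathcal U}}\rho_{\bm i}/\Lambda$. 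For the cost I would telescope $G(X_n) - G(X_0) = \sum_{k=0}^{n-1}\big(G(X_{k+1}) - G(X_k)\big)$, take expectations, condition on $\mathcal F_k$ while restricting to the event $\{n>k\}\subseteq\{X_k\in\overline{\mathcal U}\}$, and insert the drift identity, obtaining $\mathbb{E}_{\overline{\mathcal U}}[TC] = \mathbb{E}[G(X_n)] = G(\bm{i^0}) + \frac{1}{\Lambda}\mathbb{E}\big[\sum_{k=0}^{n-1}\hat g(X_k)\big] = G(\bm{i^0}) + \sum_{\bm i\in\overline{\mathcal U}}\hat g(\bm i)\rho_{\bm i}/\Lambda$, where the last equality again invokes the ``visited at most once'' count. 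Dividing the two expressions yields precisely~(\ref{eq:long_term_avg_cost2}).

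The only delicate point I anticipate is justifying the interchange of expectation with the random-length sum (equivalently, optional stopping at the random epoch $n$) in both the cycle-length and the cost computations. This is controlled by the finiteness of $\mathbb{E}[n] = \sum_{\bm i\in\overline{\mathcal U}}\rho_{\bm i}$ guaranteed by Assumption~\ref{assum:initial_state}, together with the uniform bounds $|G(\bm{i^{-j}}) - G(\bm i)| \le \max_j b_j$ and $0 \le \hat g(\bm i) \le \sum_j \lambda_j b_j$, so that $\mathbb{E}\big[\sum_{k<n}|G(X_{k+1}) - G(X_k)|\big] \le (\max_j b_j)\,\mathbb{E}[n] < \infty$ and Fubini / dominated convergence apply. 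Everything else in the argument is routine algebra and bookkeeping over the continue set.
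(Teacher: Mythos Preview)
Your proof is correct and rests on the same identity the paper uses: the one-step drift $\sum_j p_j G(\bm{i^{-j}}) - G(\bm i) = \hat g(\bm i)/\Lambda$ (the paper's Lemma~\ref{lemma: G}), combined with the observation that each continue state is visited at most once and with probability $\rho_{\bm i}$. The difference is packaging. The paper's proof does not write out the telescoping sum or invoke optional stopping explicitly; instead it appeals to Theorem~10.6 of \citet{breiman1964stopping}, which recasts the stopping problem as an ``incentive-fee'' problem with constant terminal payoff $G(\bm{i^0})$ and per-step fee $\hat g(\bm i)/\Lambda$, after which the decomposition into a base cost plus per-visit contributions is read off directly. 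Your route is more elementary and self-contained---no external reference is needed---and you are explicit about the integrability needed to interchange expectation with the random-length sum (bounded increments and $\mathbb{E}[n]<\infty$), a point the paper leaves inside the cited theorem. Both arguments are short; yours makes the mechanics more visible, while the paper's keeps the entrance-fee/incentive-fee language consistent with the surrounding development.
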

The proof for Theorem \ref{thm:cycle_decomp} is provided in Electronic Companion~\ref{sec:Proofs for Trigger Set Control Policy}.
This theorem provides a computationally efficient way to update the long-term average cost when adding an additional state $\bm i$ to the continue set $\overline{\cal U}$. When we add a state $\bm i$ to the continue set $\overline{\cal U}$, it contributes $\frac{\hat{g}(\bm i)}{\Lambda} \rho_{\bm i}$ to the expected cost per cycle and $\frac{1}{\Lambda} \rho_{\bm i}$ to the expected cycle length. 

Algorithm \ref{alg:continue_set} implements this approach. For each state, we calculate its visiting probability $\rho_{\bm i}$ using Equation~\eqref{eq:rho_calc}, update the total expected cost and cycle length, and then calculate the new average cost. We keep adding states as long as the average cost decreases. Table~\ref{table:state_info} demonstrates the algorithm using our example parameters, showing how the optimal continue set is built incrementally until reaching state (1,4), where the minimum average cost of $\alpha^*=7.3832$ is achieved.

\begin{figure}[htbp]
\centering
\begin{minipage}[c]{0.48\textwidth}
        \begin{algorithm}[H]\small
                \captionsetup{font=small} 
            \caption{Incremental Construction of Continue Set}
            \label{alg:continue_set}
            \begin{algorithmic}[1]
                \baselineskip=0.75\baselineskip  
                \State Sort all states by increasing $\hat{g}(\cdot)$ values
                \State Initialize $\overline{\cal U} = \emptyset$, $TC = G({\bm {i^0}})$, $T = \tau$
                \For{each state $\bm i$ in sorted order}
                    \State $TC_{new} = TC + \frac{\hat{g}(\bm i)}{\Lambda} \rho_{\bm i}$, $T_{new} = T + \frac{1}{\Lambda} \rho_{\bm i}$
                    \State $c(\mu_{\hat{g}(\bm i)}) = \frac{TC_{new}}{T_{new}}$
                    \If{$c(\mu_{\hat{g}(\bm i)}) < \hat{g}(\bm i)$}
                        \State Add $\bm i$ to $\overline{\cal U}$
                        \State $TC = TC_{new}$, $T = T_{new}$
                    \Else
                        \State \textbf{break}
                    \EndIf
                \EndFor
            \end{algorithmic}
        \end{algorithm}
\end{minipage}
\hfill
\begin{minipage}[c]{0.48\textwidth}
\begin{table}[H]
\TABLE
{States ordered by entrance fee $f'({\bm i})$ with parameters: $A=10$, $\tau=1$, ${\bm b}=[6,6]$, $\bm{\lambda}=[1,2]$, and initial state ${\bm i^0}=(3,4)$\label{table:state_info}}
{    \begin{tabular}{|c|c|c|c|c|c|}
        \hline
        ${\bm i}$ & $\hat{g}({\bm i})$ & $\rho({\bm i})$ & $TC_{\text{new}}$ & $T_{\text{new}}$ & $c(\mu_{\hat{g}(\bm i)})$ \\
        \hline
        (3,4) & 2.1963 & 1.0000 & 11.3230 & 1.3333 & 8.4922 \\
        (2,4) & 3.3000 & 0.3333 & 11.6896 & 1.4444 & 8.0928 \\
        (3,3) & 4.3617 & 0.6667 & 12.6589 & 1.6667 & 7.5953 \\
        (2,3) & 5.4653 & 0.4444 & 13.4686 & 1.8148 & 7.4215 \\
        (1,4) & 5.5072 & 0.1111 & 13.6726 & 1.8519 & $\alpha^*=$7.3832 \\
        (3,2) & 7.6097 & 0.4444 & 14.7999 & 2.0000 & 7.4000 \\
        \hline
    \end{tabular}}
{}
\end{table}
\end{minipage}
{}
\end{figure}

This approach significantly simplifies calculations by focusing exclusively on the continue states, eliminating the need to identify the corresponding trigger states. It also avoids solving the Markov decision process for stationary distribution $\pi$ and instead uses straightforward incremental calculations at each step.  As shown in Appendix~\ref{app:comp_performance}, Algorithm \ref{alg:continue_set} also dramatically outperforms the LP formulation defined in Equation~(\ref{eq:LP_Formulation}) in computational efficiency.

However, while the algorithm itself is computationally efficient once states are ordered, this ordering becomes infeasible for realistic problems since the state space grows exponentially with the number of products.
In such cases, we cannot explicitly define the optimal continue set. Consequently, we also cannot determine the exact value of $\alpha^*$ required by Theorem~\ref{thm:online_policy} to define the optimal continue set. Therefore, in the next section, we introduce an approximation approach to estimate $\alpha^*$ which can then be used for the online policy defined in Theorem~\ref{thm:online_policy}.

\subsection{Approximate Online Control Framework}
\label{sec:ApproximationMethods}

To implement the online policy, instead of computing the complete continue and stopping sets — which would be prohibitively large in realistic settings — we only need a policy parameter $\alpha^*$. The system decides whether to continue or stop comparing the current state’s $\hat{g}(\cdot)$ value to this parameter, continuing if it is lower and stopping if it is higher.

While the online policy using $\alpha^*$ is optimal, computing the exact value of $\alpha^*$ is computationally intractable for realistic-sized problems. However, we can exploit the policy structure by approximating $\alpha^*$ while maintaining the simple parameter-based decision rule.  To this end, we offer two expressions to estimate $\alpha^*$. Both approximations avoid computational complexity by evaluating the state-based costs at an expected state. 

Our first approximation $\alpha_G$ approximates $\alpha^*$ by finding the cycle length $T$ that minimizes the ratio of the replenishment cost $G(\cdot)$ at the expected state at time $T$ to the cycle length $T$. That is,
\begin{eqnarray}
\alpha_{G} = \min_{T \geq 0} \frac{G(\mathbb{E}[\bm{I}(T-\tau)])}{T}~,\label{eq:alpha_G}
\end{eqnarray}
where $\bm{I}(t)$ denotes the random state at time $t$ and $\mathbb{E}[\bm{I}(t)]=(Q_1-\lambda_1 t , \dots, Q_J-\lambda_J t)$  is the expected vector of inventory levels at time $t$ for Poisson demands with rate $\lambda_j t$. 

Our second approximation builds on Theorem~\ref{thm:cycle_decomp}. While the optimal approach would require the calculation of entrance fees for all states in the continue set, our approximation $\alpha_{\hat{g}}$ evaluates these fees only at the expected states after each transition of the uniformized chain. We then use the number of transitions $N$ that minimizes the ratio of this expression to the cycle length, that is, 
\begin{eqnarray}
\alpha_{\hat{g}} = \min_{N} \frac{\sum_{n=0}^{N} \hat{g}(\mathbb{E}[I(n/\Lambda)]) +\Lambda \; G({{\bm i^0}})}{N+\tau\;\Lambda}~,\label{eq:alpha_ghat}
\end{eqnarray}
where $n/\Lambda$ is the expected time for $n$ transitions of the uniformized chain. 

Both approximations are used in an online implementation following the structure of the optimal policy. That is, policies $\mu_{\alpha_G}$ and $\mu_{\alpha_{\hat{g}}}$ are defined as
\begin{eqnarray}
\mu_{\alpha_G}({\bm i}) = \begin{cases}
\text{continue} & \text{if } \hat{g}({\bm i}) \leq \alpha_{G}~, \\
\text{stop} & \text{if } \hat{g}({\bm i}) > \alpha_{G}~,
\end{cases}\quad \text{ and } \quad 
\mu_{\alpha_{\hat{g}}}({\bm i}) = \begin{cases}
\text{continue} & \text{if } \hat{g}({\bm i}) \leq \alpha_{\hat{g}}~, \\
\text{stop} & \text{if } \hat{g}({\bm i}) > \alpha_{\hat{g}}~.
\end{cases}
\end{eqnarray}

The true long-term average costs of these policies, $c(\mu_{\alpha_G})$ and $c(\mu_{\alpha_{\hat{g}}})$, can be evaluated using standard Markov chain analysis. For our example in Table \ref{table:state_info}, both approximations yield policies that achieve the optimal performance. The policy based on $\alpha_G$ uses policy parameter $\alpha_G = 6.9199$, while the policy based on $\alpha_{\hat{g}}$ uses policy parameter $\alpha_{\hat{g}} = 7.2830$. Both policies result in the same optimal long-term average cost $c(\mu_{\alpha_G})= c(\mu_{\alpha_{\hat{g}}})= 7.3832 = \alpha^*$.

\begin{restatable}{proposition}{policybounds}
\label{thm:policy_bounds}

Let $\mu_{\alpha^*}$ be the exact online control policy with long-term average cost $c(\mu_{\alpha^*}) = \alpha^*$, and let $\mu_{\alpha_{\hat{g}}}$ and $\mu_{\alpha_G}$ be policies from the approximate online control framework based on  on the approximated policy parameters $\alpha_{\hat{g}}$ and $\alpha_G$, respectively. The long-term average costs of these policies satisfy the following. 
\begin{enumerate}
    \item By the optimality of $\mu_{\alpha^*}$, 
       $ c(\mu_{\alpha^*}) \leq c(\mu_{\alpha_{\hat{g}}}) \quad \text{and} \quad c(\mu_{\alpha^*}) \leq c(\mu_{\alpha_G})~. $
    \item For any approximated policy parameters $\alpha'$ (either using $\alpha_{\hat{g}}$ or $\alpha_G$), 
    \begin{eqnarray}
        &\text{if } \alpha' \geq \alpha^*, &\text{ then } \alpha' \geq c(\mu_{\alpha'}) \geq \alpha^* = c(\mu_{\alpha^*}) ~, \text{ and } \label{eq:upper_bound}\\
        &\text{if } \alpha' \leq \alpha^*, &\text{ then } \alpha' \leq \alpha^* = c(\mu_{\alpha^*}) \leq c(\mu_{\alpha'}) ~. \label{eq:lower_bound}
    \end{eqnarray}
\end{enumerate}
\end{restatable}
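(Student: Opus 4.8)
The plan is to observe that all of Proposition~\ref{thm:policy_bounds} except a single inequality follows by chaining results already established, and then to prove that one inequality --- namely $c(\mu_{\alpha'})\le\alpha'$ whenever $\alpha'\ge\alpha^*$ --- via the cycle decomposition of Theorem~\ref{thm:cycle_decomp}. For item~1, the key observation is that $\mu_{\alpha_{\hat g}}$ and $\mu_{\alpha_G}$ are themselves trigger-set control policies --- indeed threshold policies on the index $\hat g(\cdot)$, of exactly the form ``continue iff $\hat g(\bm i)\le$ parameter'' --- so they lie in the class over which $\mu_{\alpha^*}$ is optimal by Theorems~\ref{thm:threshold} and~\ref{thm:online_policy} and Corollary~\ref{cor:optimal_continue_set}; hence $c(\mu_{\alpha^*})=\alpha^*\le c(\mu_{\alpha'})$ for any threshold $\alpha'$. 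This is item~1, and it already supplies the inequality $c(\mu_{\alpha'})\ge\alpha^*=c(\mu_{\alpha^*})$ appearing in both \eqref{eq:upper_bound} and \eqref{eq:lower_bound}; chaining it with the hypothesis $\alpha'\le\alpha^*$ yields the full statement \eqref{eq:lower_bound}. The only content left to prove is the upper bound $\alpha'\ge c(\mu_{\alpha'})$ in \eqref{eq:upper_bound}.

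To obtain that bound I would combine Theorem~\ref{thm:cycle_decomp} with the closed form for visiting probabilities in Lemma~\ref{lemma:incremental_calc}. From Lemma~\ref{lemma:entrance_fee_expression} and the definition of $\hat g$ one has $f'(\bm i)=(\hat g(\bm i)-\alpha^*)/\Lambda$, so Corollary~\ref{cor:optimal_continue_set} identifies the optimal continue set as $\overline{\cal U}^*=\{\bm i:\hat g(\bm i)\le\alpha^*\}$, while $\mu_{\alpha'}$ has continue set $\overline{\cal U}'=\{\bm i:\hat g(\bm i)\le\alpha'\}\supseteq\overline{\cal U}^*$. Since $\hat g(\bm i)=\sum_j\lambda_j b_j P(D_j(\tau)\ge i_j)$ is non-increasing in each coordinate of $\bm i$ and every within-cycle sample path descends coordinatewise from $\bm i^0=\bm Q$, each threshold set is closed under taking path-predecessors, so Lemma~\ref{lemma:incremental_calc} applies term by term to both sets and the visiting probabilities $\rho_{\bm i}$ are identical in the two evaluations. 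Writing, via Theorem~\ref{thm:cycle_decomp}, $c(\mu_{\alpha^*})=N^*/D^*$ with $N^*=G(\bm i^0)+\sum_{\bm i\in\overline{\cal U}^*}\tfrac{\hat g(\bm i)}{\Lambda}\rho_{\bm i}$, $D^*=\tau+\sum_{\bm i\in\overline{\cal U}^*}\tfrac{1}{\Lambda}\rho_{\bm i}$, and $c(\mu_{\alpha'})=(N^*+\Delta N)/(D^*+\Delta D)$ with $\Delta N=\sum_{\bm i\in\overline{\cal U}'\setminus\overline{\cal U}^*}\tfrac{\hat g(\bm i)}{\Lambda}\rho_{\bm i}\ge 0$ and $\Delta D=\sum_{\bm i\in\overline{\cal U}'\setminus\overline{\cal U}^*}\tfrac{1}{\Lambda}\rho_{\bm i}\ge 0$: if $\Delta D=0$ then $\overline{\cal U}'=\overline{\cal U}^*$ (every $\rho_{\bm i}$ is strictly positive) and $c(\mu_{\alpha'})=\alpha^*\le\alpha'$; if $\Delta D>0$, then $\Delta N/\Delta D$ is a convex combination of values $\hat g(\bm i)$ each lying in $(\alpha^*,\alpha']$, so $\alpha^*<\Delta N/\Delta D\le\alpha'$, and the mediant inequality (for positive $b,d$, $(a+c)/(b+d)$ lies between $a/b$ and $c/d$) places $c(\mu_{\alpha'})$ between $N^*/D^*=\alpha^*$ and $\Delta N/\Delta D$, whence $c(\mu_{\alpha'})\le\Delta N/\Delta D\le\alpha'$. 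Together with item~1 this closes \eqref{eq:upper_bound}.

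I expect the main obstacle to be purely the bookkeeping needed to justify a common set of visiting probabilities across $\overline{\cal U}^*$ and $\overline{\cal U}'$: one must verify that every threshold set $\{\hat g\le\alpha'\}$ is reachable and closed under path-predecessors from $\bm i^0$, so that the multinomial formula of Lemma~\ref{lemma:incremental_calc} holds term by term regardless of the surrounding continue set. This rests solely on the coordinatewise monotonicity of $\hat g(\cdot)$, which is immediate from its closed form; the remainder is the one-line mediant inequality applied to the decomposition of Theorem~\ref{thm:cycle_decomp}. (Equivalently, one may argue via Algorithm~\ref{alg:continue_set}: once the running continue set equals $\overline{\cal U}^*$ its average cost is exactly $\alpha^*$, and each state added afterward has $\hat g$-value above the running cost but at most $\alpha'$, so by the mediant inequality the running cost is non-decreasing yet never exceeds $\alpha'$.)
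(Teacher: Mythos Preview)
Your proposal is correct and matches the paper's approach: item~1 and \eqref{eq:lower_bound} follow from optimality, while the upper bound $\alpha'\ge c(\mu_{\alpha'})$ in \eqref{eq:upper_bound} is obtained from the cycle decomposition of Theorem~\ref{thm:cycle_decomp} together with a mediant-type inequality. The only presentational difference is that the paper packages this bound inductively---adding one state at a time via two auxiliary lemmas (Lemmas~\ref{lemma:ghat_inequality} and~\ref{lemma:ghat_bound}), which is precisely your parenthetical Algorithm~\ref{alg:continue_set} variant---whereas your main argument applies the mediant once to the whole block $\overline{\cal U}'\setminus\overline{\cal U}^*$; both routes rest on the same ingredients, and your batched version trades the induction for the one-line verification that $\rho_{\bm i}$ is policy-independent on threshold sets.
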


These relationships demonstrate that while the approximated policy parameters may over- or underestimate $\alpha^*$, the resulting policy costs adjust in the opposite direction. The proof of Proposition~\ref{thm:policy_bounds} is provided in the Electronic Companion~\ref{sec:Proofs for Trigger Set Control Policy}. As this result predicts, we show that these approximations perform almost optimally using a set of numerical tests.

\section{Optimal Fixed Cycle Replenishment Policy}
\label{sec:periodic_policy}

In this section, we formulate the problem of determining the length of an optimal {\it fixed cycle} to replenish IVMs. Under this policy, the objective of long-run expected average cost per unit time for a fixed cycle length of $T$ can be written as
\begin{eqnarray}
C_F(T, \bm Q) = \frac{A +\sum_{j=1}^{J} b_j \mathbb{E}[S_j(T, Q_j)]}{T}~,
\end{eqnarray}
where $\mathbb{E}[S_j(T, Q_j)]$ is the expected shortage of product $j$ during $T$ units of time, for starting item~$j$ inventory of $Q_j$ units. Before presenting the analysis, we introduce two reasonable assumptions.

\begin{assumption}
  We consider only strictly positive slot allocations, that is, $Q_j > 0$ for all items~$j$.
\end{assumption}

\begin{assumption}\label{assum:leadtime_omit}
In deriving the optimal cycle time $T^*$, we omit the lead time $\tau$ from our analysis, since it is constant and does not affect the optimization of the objective function.
\end{assumption}
Assumption \ref{assum:leadtime_omit} allows us to consider any positive continuous value of $T$. However, if the resulting optimal cycle time $T^*$ is less than $\tau$, it would violate Assumption~\ref{assum:no_overlap} from Section~\ref{sec:optimal_policy}, which prevents overlapping replenishment orders. We address this consideration when establishing bounds on the fixed cost $A$ in Section~\ref{sec:numerical}.

Before characterizing the optimal cycle length, we first establish the limiting behavior of the cost function as the cycle length grows arbitrarily large. This helps us understand the worst-case performance of the system and provides context for the optimization problem.

\begin{restatable}{lemma}{thmasymptotic}\label{thm:asymptotic}
The long-run average cost approaches a finite limit as fixed the cycle length increases, 
\begin{eqnarray}
\lim_{T \to \infty} C_{F}(T,\bm Q)&=& \sum_{j=1}^{J} b_j\;\lambda_j~.\nonumber
\end{eqnarray}
\end{restatable}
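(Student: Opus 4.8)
The plan is to work directly from the definition
\[
C_F(T,\bm Q) = \frac{A + \sum_{j=1}^J b_j\,\mathbb{E}[S_j(T,Q_j)]}{T}
\]
and analyze the behavior of each term as $T\to\infty$. The only nontrivial ingredient is the asymptotics of the expected shortage $\mathbb{E}[S_j(T,Q_j)]$; the fixed cost contributes $A/T\to 0$, so everything hinges on showing $\mathbb{E}[S_j(T,Q_j)]/T \to b_j\lambda_j$ after multiplying by $b_j$ and summing — equivalently, $\mathbb{E}[S_j(T,Q_j)]/T \to \lambda_j$.

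First I would give an explicit expression for the expected shortage. Since demand for item $j$ over $[0,T]$ is Poisson with mean $\lambda_j T$ and the IVM starts with $Q_j$ units, the cumulative shortage at time $T$ is $S_j(T,Q_j) = (D_j(T) - Q_j)^+$ where $D_j(T)\sim\mathrm{Poisson}(\lambda_j T)$. Hence
\[
\mathbb{E}[S_j(T,Q_j)] = \sum_{r=Q_j}^\infty (r - Q_j)\,P(D_j(T)=r) = \mathbb{E}[D_j(T)] - Q_j + \mathbb{E}[(Q_j - D_j(T))^+] = \lambda_j T - Q_j + \mathbb{E}[(Q_j-D_j(T))^+],
\]
using $(r-Q_j)^+ - (Q_j-r)^+ = r - Q_j$. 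The correction term $\mathbb{E}[(Q_j - D_j(T))^+]$ is bounded above by $Q_j$ (it is a nonnegative random variable bounded by $Q_j$), and in fact tends to $0$ as $T\to\infty$ since $P(D_j(T) < Q_j)\to 0$; either bound suffices.

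Then I would divide by $T$:
\[
\frac{\mathbb{E}[S_j(T,Q_j)]}{T} = \lambda_j - \frac{Q_j}{T} + \frac{\mathbb{E}[(Q_j-D_j(T))^+]}{T} \;\longrightarrow\; \lambda_j
\]
as $T\to\infty$, because the last two terms are $O(1/T)$. Multiplying by $b_j$, summing over $j$, and adding $A/T\to 0$ yields $\lim_{T\to\infty} C_F(T,\bm Q) = \sum_{j=1}^J b_j\lambda_j$, as claimed.

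The main obstacle — such as it is — is simply identifying the right decomposition of $\mathbb{E}[S_j(T,Q_j)]$ so that the leading term $\lambda_j T$ is exposed cleanly and the remainder is manifestly $O(1)$; once the identity $\mathbb{E}[(D-Q)^+] = \mathbb{E}[D] - Q + \mathbb{E}[(Q-D)^+]$ is written down, the rest is routine. An alternative route avoiding this identity is to bound $\mathbb{E}[S_j(T,Q_j)]$ between $\mathbb{E}[(D_j(T)-Q_j)]= \lambda_j T - Q_j$ and $\mathbb{E}[D_j(T)] = \lambda_j T$ (since $0 \le (x-Q_j)^+ \le$ is squeezed appropriately — more precisely $x - Q_j \le (x-Q_j)^+ \le x$ for $x\ge 0$), divide by $T$, and apply the squeeze theorem; I would mention this as the shorter variant if space is tight.
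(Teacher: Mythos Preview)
Your argument is correct, but it proceeds by a different decomposition than the paper. You use the elementary identity $(D-Q)^+ = (D-Q) + (Q-D)^+$ to write $\mathbb{E}[S_j(T,Q_j)] = \lambda_j T - Q_j + \mathbb{E}[(Q_j-D_j(T))^+]$, isolating the leading $\lambda_j T$ term directly with an $O(1)$ remainder; this is distribution-free and immediately gives the limit. The paper instead first establishes the Poisson-specific closed form $\mathbb{E}[S_j(T,Q_j)] = \lambda_j T\,P(D_j(T)\ge Q_j) - Q_j\,P(D_j(T)\ge Q_j+1)$ (their supporting lemma on expected shortage) and then lets the tail probabilities tend to $1$. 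Your route is shorter and more general for this particular lemma; the paper's route has the advantage that the same closed form and its derivatives are reused to derive the first-order condition characterizing the optimal cycle length $T^*$, so their detour pays off downstream.
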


This asymptotic result shows that even with arbitrarily long cycles, the cost remains bounded, converging to the sum of the products of demand rates and stockout costs. Having established this upper bound on costs, we now turn to the question of finding an optimal cycle length that minimizes the long-run average cost. The following theorem establishes both the existence and uniqueness of such an optimal cycle length under a reasonable condition on the fixed cost.

\begin{restatable}{theorem}{thmoptimalcycle}\label{thm:optimal_cycle}
For a fixed replenishment cost of $A$ and item-specific shortage costs $b_j$ for all $j$, if $$A<\sum_{j=1}^{J}\;b_j\;Q_j~,$$ there exists a unique optimal cycle length $T^*$ that minimizes the long-run average cost. This optimal cycle length is characterized by the equation
\begin{equation}
\sum_{j=1}^{J} b_j Q_j P(D_j(T^*) \geq Q_j+1) = A.
\end{equation}
\end{restatable}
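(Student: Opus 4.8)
The plan is to analyze the cost function $C_F(T,\bm Q)$ through its derivative with respect to $T$, reducing the optimality condition to a single scalar equation and then establishing existence and uniqueness via monotonicity of an auxiliary function. First I would obtain a closed-form (or at least differentiable) expression for the expected shortage $\mathbb{E}[S_j(T,Q_j)]$. Since $D_j(T)$ is Poisson with rate $\lambda_j T$, the expected cumulative shortage over $[0,T]$ with starting inventory $Q_j$ is $\mathbb{E}[S_j(T,Q_j)] = \int_0^T \mathbb{E}[(D_j(t)-Q_j)^+]\,dt$, whose $T$-derivative is $\mathbb{E}[(D_j(T)-Q_j)^+] = \sum_{r>Q_j}(r-Q_j)P(D_j(T)=r)$. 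A standard Poisson identity then gives $\frac{d}{dT}\mathbb{E}[(D_j(T)-Q_j)^+] = \lambda_j P(D_j(T)\geq Q_j)$, and more usefully $\frac{d}{dT}\mathbb{E}[S_j(T,Q_j)]$ can be manipulated so that the first-order condition $C_F'(T)=0$, i.e. $T\cdot(\text{numerator})' = \text{numerator}$, collapses to $\sum_j b_j Q_j P(D_j(T)\geq Q_j+1) = A$ using the identity $\lambda_j T\, P(D_j(T)=Q_j) = Q_j\,P(D_j(T)=Q_j)\cdot\frac{\lambda_j T}{Q_j}$ together with telescoping of the Poisson tail. I would define $H(T) := \sum_{j=1}^J b_j Q_j P(D_j(T)\geq Q_j+1)$ and show the stationarity condition is exactly $H(T)=A$.

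Next I would establish the qualitative behavior of $H$. At $T=0$, each $D_j(0)=0$ almost surely, so $P(D_j(0)\geq Q_j+1)=0$ (recall $Q_j\geq 1$), giving $H(0)=0$. As $T\to\infty$, $P(D_j(T)\geq Q_j+1)\to 1$, so $H(\infty)=\sum_j b_j Q_j$. The key step is to argue $H$ is strictly increasing on $(0,\infty)$: since $P(D_j(T)\geq Q_j+1)$ is the tail of a Poisson distribution whose rate $\lambda_j T$ is strictly increasing in $T$, and Poisson distributions are stochastically increasing in their rate, each term is strictly increasing, hence so is $H$. Therefore, under the hypothesis $A < \sum_j b_j Q_j = H(\infty)$ (and trivially $A>0>H(0)$ is not needed — rather $A>0=H(0)$), the intermediate value theorem plus strict monotonicity yield a unique $T^*\in(0,\infty)$ with $H(T^*)=A$.

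Finally I would confirm that this unique critical point is indeed the global minimizer, not a maximizer or inflection. I would show that $C_F'(T)$ has the same sign as $H(T)-A$ (after clearing the positive factor $T^2$ and a positive multiplicative constant): for $T<T^*$ we have $H(T)<A$ so $C_F'(T)<0$, and for $T>T^*$ we have $H(T)>A$ so $C_F'(T)>0$, making $T^*$ the unique global minimum. This sign analysis requires care in bookkeeping the algebra when differentiating the ratio, and in verifying that the bracketed expression one gets from $\frac{d}{dT}[A + \sum_j b_j \mathbb{E}[S_j(T,Q_j)]]\cdot T - [A+\sum_j b_j\mathbb{E}[S_j(T,Q_j)]]$ factors cleanly as a positive multiple of $H(T)-A$; this is where the Poisson tail telescoping identity $\sum_{r=Q_j}^\infty(r-Q_j)P(D_j=r) = \sum_{r=Q_j+1}^\infty P(D_j\geq r)$ and the rate-derivative identity must be combined precisely.

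I expect the main obstacle to be the algebraic reduction in the first paragraph: showing that the raw first-order condition simplifies \emph{exactly} to $\sum_j b_j Q_j P(D_j(T)\geq Q_j+1)=A$ rather than to some messier expression involving $P(D_j(T)=Q_j)$ or $P(D_j(T)\geq Q_j)$. The clean form relies on the specific cancellation $\lambda_j T\,P(D_j(T)\geq Q_j) - \mathbb{E}[(D_j(T)-Q_j)^+] = Q_j\,P(D_j(T)\geq Q_j+1)$, which I would verify by writing both sides as Poisson sums and reindexing; once this identity is in hand, the rest is routine.
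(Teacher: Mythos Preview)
Your overall strategy matches the paper's: compute $C_F'(T)$, reduce the first-order condition to $H(T)=A$ with $H(T):=\sum_j b_jQ_jP(D_j(T)\ge Q_j+1)$, then use $H(0)=0$, $H(\infty)=\sum_j b_jQ_j$, and strict monotonicity of $H$ to get a unique root. The paper confirms minimality via the second derivative at $T^*$, whereas your sign analysis of $C_F'$ on either side of $T^*$ is a clean equivalent.

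There is, however, a genuine gap in your setup: you misidentify $\mathbb{E}[S_j(T,Q_j)]$. In this model stockouts are lost sales counted per unit of unmet demand, so (see the paper's Lemma~\ref{lemma:expected_shortage})
\[
\mathbb{E}[S_j(T,Q_j)]=\mathbb{E}\big[(D_j(T)-Q_j)^+\big]=\lambda_jT\,P(D_j(T)\ge Q_j)-Q_j\,P(D_j(T)\ge Q_j+1),
\]
not the time integral $\int_0^T\mathbb{E}[(D_j(t)-Q_j)^+]\,dt$. With the correct definition one has $\frac{d}{dT}\mathbb{E}[S_j]=\lambda_jP(D_j(T)\ge Q_j)$, and then the cancellation you cite at the end,
\[
\lambda_jT\,P(D_j(T)\ge Q_j)-\mathbb{E}\big[(D_j(T)-Q_j)^+\big]=Q_j\,P(D_j(T)\ge Q_j+1),
\]
is exactly $T\cdot\frac{d}{dT}\mathbb{E}[S_j]-\mathbb{E}[S_j]$, and the first-order condition collapses to $H(T)=A$ as claimed. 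Under your integral interpretation the relevant quantity in $C_F'$ would instead be $T\,\mathbb{E}[(D_j(T)-Q_j)^+]-\int_0^T\mathbb{E}[(D_j(t)-Q_j)^+]\,dt$, which is \emph{not} equal to $Q_jP(D_j(T)\ge Q_j+1)$ (check $Q_j=\lambda_j=1$, $T=1$), so the reduction you announce would fail. Once you fix the definition of $S_j$, your proposal is essentially the paper's proof.
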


The proofs of Lemma \ref{thm:asymptotic}
and Theorem  \ref{thm:optimal_cycle}, along with supporting technical results are provided in the Electronic Companion~\ref{sec: Proofs for Fixed Cycle Replenishment}.

\section{Numerical Study}
\label{sec:numerical}

We now present an extensive numerical study to compare the performance of the trigger set control policy with that of fixed cycle replenishment.
We first note that the optimal trigger set control policy naturally outperforms the best fixed cycle replenishment due to its state-dependent nature. That is, when we ensure that both policies adhere to the same lead time $\tau$ (meaning that the fixed cycle length cannot be smaller than $\tau$ to prevent overlapping replenishments, as established in Assumption~\ref{assum:no_overlap}), the optimal state-dependent approach achieves lower or equal costs. The reason is that, while fixed cycle replenishment follows a predetermined schedule regardless of inventory status, the trigger set control policy makes replenishment decisions based on real-time inventory levels, allowing for more responsive and efficient restocking.

In our numerical experiments, since the trigger set control policy $c(\mu_{\alpha})$ may allow inventory to decrease indefinitely, we introduce a bounded version $c(\mu_{\alpha}, I_{\text{min}})$ that forces replenishment when the inventory of any item reaches $I_{\text{min}}$. This boundary not only ensures the ergodicity of the Markov chain but also creates a finite state space that is computationally tractable. For our numerical study, we set $I_{\text{min}} = -2$, allowing at most two units of backorders while ensuring ergodicity and computational feasibility.

For each type of policy, we test instances with two to six items, running 100 trials per instance. The parameters for each trial are randomly generated within the ranges chosen to reflect realistic vending machine operations while keeping the computation feasible: initial inventory of ${Q_j \in \{1, 2, \ldots, 20\}}$ units, stockout costs ${b_j \in \{3, 4, \ldots, 10\}}$ dollars per unit, demand rates ${\lambda_j \in \{1, 2, 3, 4\}}$ units per day. We set the lead time at $\tau = 1$ day, reflecting the fact that replenishments in this context are typically made from a local warehouse.

To ensure a fair comparison between fixed cycle and bounded trigger set control policies, we must address two key differences in their structures.
First, since the trigger set control policy assumes a lead time $\tau$ while the fixed cycle replenishment does not, we need to prevent the fixed cycle replenishment from replenishing more frequently than possible under the trigger set control policy. This is accomplished by setting a minimum fixed cost~$A_{\min}$ that naturally ensures that the optimal cycle length will exceed~$\tau$ without explicitly constraining~$T$. The same~$A_{\min}$ also ensures that it is optimal for the trigger set control policy to not replenish in the initial state, aligning the behavior of both policies. The proofs establishing these bounds are provided in the Electronic Companion~\ref{TC; numerical}.
\begin{restatable}{proposition}{propinitialcont}\label{prop:initial_cont}
Continuing without replenishment is optimal in the initial state if and only if
\begin{equation}
A > A_{\min} = \sum_j b_j \; Q_j \; P(D_j(\tau) \geq Q_j +1).
\end{equation}
\end{restatable}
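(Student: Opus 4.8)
The plan is to characterize "continuing is optimal in the initial state $\bm{i^0}=\bm Q$" using the entrance fee machinery already developed. By Corollary~\ref{cor:optimal_continue_set}, the optimal policy continues exactly at states where $f'(\bm i)\le 0$, and by Lemma~\ref{lemma:entrance_fee_expression}, $f'(\bm i) = -\alpha^*/\Lambda + \sum_j p_j b_j P(D_j(\tau)\ge i_j)$. So at the initial state $\bm Q$, continuing is optimal iff $f'(\bm Q)\le 0$, i.e. iff $\sum_j p_j b_j P(D_j(\tau)\ge Q_j)\le \alpha^*/\Lambda$. This, however, references $\alpha^*$, which is not directly usable; the real work is to convert this into the stated self-contained condition on $A$.

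The cleaner route I would take uses the $\hat g$ characterization from Theorem~\ref{thm:online_policy}: continuing is optimal at $\bm i$ iff $\hat g(\bm i)\le \alpha^*$, where $\hat g(\bm i)=\sum_j \lambda_j b_j P(D_j(\tau)\ge i_j)$. Thus continuing at $\bm i^0=\bm Q$ is optimal iff $\hat g(\bm Q)=\sum_j \lambda_j b_j P(D_j(\tau)\ge Q_j)\le \alpha^*$. I still need to relate $\alpha^*$ to $A$. The key observation is Assumption~\ref{assum:initial_state}(i) plus Theorem~\ref{thm:cycle_decomp}: if the initial state is in the continue set, then $\alpha^* = \big(G(\bm Q) + \sum_{\bm i\in\overline{\cal U}^*,\,\bm i\ne\bm Q}\tfrac{\hat g(\bm i)}{\Lambda}\rho_{\bm i}\big)/\big(\tau + \sum_{\bm i\in\overline{\cal U}^*}\tfrac{1}{\Lambda}\rho_{\bm i}\big)$; and if instead the optimal policy stops immediately at $\bm Q$, the cycle consists of just the stop, giving average cost $G(\bm Q)/\tau$. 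So the "if and only if" should come from comparing these two: continuing at $\bm Q$ improves on stopping immediately iff the marginal cost rate of the first continuation step, which is $\hat g(\bm Q)$ (the contribution $\tfrac{\hat g(\bm Q)}{\Lambda}\rho_{\bm Q}$ to cost over $\tfrac{1}{\Lambda}\rho_{\bm Q}$ to cycle length, with $\rho_{\bm Q}=1$), is below the average cost achievable — and by monotonicity (Theorem~\ref{thm:threshold}) this reduces to whether $\hat g(\bm Q)$ is below the average cost $G(\bm Q)/\tau$ of the degenerate stop-immediately policy. That gives: continuing is optimal iff $\hat g(\bm Q) < G(\bm Q)/\tau$, i.e. $\tau\sum_j\lambda_j b_j P(D_j(\tau)\ge Q_j) < A + \sum_j b_j \mathbb{E}[(D_j(\tau)-Q_j)^+]$.

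To finish I would show the right-hand side of this inequality rearranges into the clean form $A > A_{\min} = \sum_j b_j Q_j P(D_j(\tau)\ge Q_j+1)$. This is a computation with Poisson tails: using $\mathbb{E}[(D_j(\tau)-Q_j)^+] = \sum_{r>Q_j}(r-Q_j)P(D_j(\tau)=r)$ and the identity $\lambda_j\tau\, P(D_j(\tau)\ge Q_j) = (Q_j+1)P(D_j(\tau)=Q_j+1) + \dots$ — more precisely, the standard Poisson fact $\lambda_j\tau\,P(D_j(\tau)= r) = (r+1)P(D_j(\tau)=r+1)$, hence $\lambda_j\tau\,P(D_j(\tau)\ge Q_j) = \sum_{r\ge Q_j}(r+1)P(D_j(\tau)=r+1) = \sum_{s\ge Q_j+1} s\,P(D_j(\tau)=s)$. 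Then $\tau\hat g(\bm Q) - \sum_j b_j\mathbb{E}[(D_j(\tau)-Q_j)^+] = \sum_j b_j\big(\sum_{s\ge Q_j+1}s P(D_j(\tau)=s) - \sum_{s\ge Q_j+1}(s-Q_j)P(D_j(\tau)=s)\big) = \sum_j b_j Q_j P(D_j(\tau)\ge Q_j+1)$, which is exactly $A_{\min}$; so the condition $\tau\hat g(\bm Q) < A + \sum_j b_j\mathbb{E}[(D_j(\tau)-Q_j)^+]$ becomes $A > A_{\min}$.

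\textbf{Main obstacle.} The routine part is the Poisson tail algebra; the genuinely delicate step is justifying the "if and only if" rigorously, i.e. that $\bm Q\in\overline{\cal U}^*$ is equivalent to $\hat g(\bm Q)$ being strictly below the best achievable average cost, and in turn equivalent to $\hat g(\bm Q) < G(\bm Q)/\tau$. The forward direction (if $A>A_{\min}$ then continuing helps) is easy — exhibit the one-step continuation policy and show its cost is strictly less than $G(\bm Q)/\tau$. The converse requires the monotonicity structure: since the optimal policy continues on a down-set of states ordered by $\hat g$, if $\bm Q$ (which has the smallest $\hat g$ value, as it is the fullest state) is not in the continue set, then the continue set is empty, the policy stops immediately, and $\alpha^* = G(\bm Q)/\tau$; combined with optimality of $\mu_{\alpha^*}$ and the characterization $\hat g(\bm Q)\le\alpha^*\Leftrightarrow$ continue, one gets $\hat g(\bm Q) \ge G(\bm Q)/\tau$, i.e. $A\le A_{\min}$. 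I would double-check the boundary case of equality to confirm it lands on the non-strict side as the statement claims.
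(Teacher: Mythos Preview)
Your proposal is correct and follows essentially the same approach as the paper: compare the stop-immediately cost $G(\bm Q)/\tau$ against the cost after adding $\bm Q$ to the continue set, reduce this to $G(\bm Q) > \tau\,\hat g(\bm Q)$, and then do the Poisson tail algebra to obtain $A > \sum_j b_j Q_j P(D_j(\tau)\ge Q_j+1)$. The paper's proof states exactly the inequality
\[
\frac{G(\bm{i^0})}{\tau} > \frac{G(\bm{i^0})+\hat g(\bm{i^0})/\Lambda}{\tau + 1/\Lambda}
\]
(the one-step incremental comparison from Theorem~\ref{thm:cycle_decomp}) and simplifies directly, without going through the $\hat g(\bm Q)\le\alpha^*$ characterization first; your route via Theorem~\ref{thm:online_policy} lands at the same place. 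Your treatment of the converse direction (using monotonicity to argue that if $\bm Q\notin\overline{\cal U}^*$ then the continue set is empty, since $\bm Q$ minimizes $\hat g$) is actually more explicit than the paper's, which states the comparison as a one-sided ``optimal if'' and leaves the biconditional implicit.
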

By setting the fixed cost $A > A_{\min}$, we ensure that Assumptions~\ref{assum:initial_state} and \ref{assum:no_overlap} are always satisfied. These assumptions state that the cycle time would be at least  $\tau$ for the fixed cycle replenishment and a replenishment is not triggered when the IVM is full under an optimal trigger policy.

Second, while the trigger set control policy is bounded by $I_{\text{min}}$, the fixed cycle replenishment does not have such a bound. To ensure that optimal fixed cycle replenishment is in compliance with these bounds without having to explicitly bound it, we define~$T_{\text{max}}$ as the time until the fastest depleting product reaches $I_{\text{min}}$ considering expected demand. That is,
\begin{equation} 
T_{\text{max}} = \min\limits_{j=1,\ldots,J} \frac{Q_j - I_{\text{min}}}{\lambda_j}.
\end{equation}

\begin{restatable}{proposition}{propmaxfixed}\label{prop:max_fixed}
For a given maximum cycle time $T_{\text{max}}$, the fixed cost~$A$ should satisfy
\begin{equation}
A < A_{\max} = \sum\limits_{j=1}^J b_j \; Q_j \; P(D_j(T_{\text{max}})\geq Q_j +1),
\end{equation}
to ensure the optimal cycle length $T^*$ does not exceed $T_{\text{max}}$ established for fixed cycle replenishment.
\end{restatable}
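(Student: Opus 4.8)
The plan is to recognize that the defining equation for $T^*$ in Theorem~\ref{thm:optimal_cycle} exhibits the optimal cycle length as the inverse image of $A$ under a strictly increasing map, so that an upper bound on $A$ translates directly into an upper bound on $T^*$. Concretely, define
\[
h(T) = \sum_{j=1}^J b_j\, Q_j\, P(D_j(T) \geq Q_j + 1), \qquad T \geq 0 .
\]
Since $D_j(T)$ is Poisson with mean $\lambda_j T$ and $\lambda_j > 0$, each tail $P(D_j(T) \geq Q_j+1)$ is continuous in $T$, vanishes at $T=0$, and increases to $1$ as $T\to\infty$; differentiating the Poisson tail gives $\frac{d}{dT} P(D_j(T)\geq Q_j+1) = \lambda_j\, P(D_j(T)=Q_j) > 0$ for $T>0$. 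Hence $h$ is continuous and strictly increasing on $(0,\infty)$, with $h(0)=0$ and $\lim_{T\to\infty} h(T) = \sum_{j} b_j Q_j$. This is exactly the monotonicity/continuity structure already exploited in the proof of Theorem~\ref{thm:optimal_cycle}, so it may simply be invoked.

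First I would note that $T_{\text{max}} \in (0,\infty)$: the numerators $Q_j - I_{\text{min}}$ are strictly positive because $Q_j>0>I_{\text{min}}$, and each $\lambda_j>0$, so $A_{\max} = h(T_{\text{max}})$ is well defined and strictly positive. Next, since $T_{\text{max}}<\infty$ and $h$ is strictly increasing, $A < A_{\max} = h(T_{\text{max}}) < \lim_{T\to\infty} h(T) = \sum_j b_j Q_j$, so the hypothesis of Theorem~\ref{thm:optimal_cycle} holds and there is a unique optimal cycle length $T^*$ satisfying $h(T^*)=A$. Finally, strict monotonicity of $h$ applied to $h(T^*) = A < A_{\max} = h(T_{\text{max}})$ yields $T^* < T_{\text{max}}$, which gives the claim. (For completeness one may observe that the bound is tight: if instead $A \geq A_{\max}$, the same monotonicity forces $T^* \geq T_{\text{max}}$.)

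The argument is essentially immediate, and there is no substantive obstacle; the only point requiring a line of care is verifying that $h$ is \emph{strictly} (not merely weakly) increasing and continuous on the entire range of interest, which follows from the elementary Poisson-tail derivative identity noted above. The conceptual content is just that $A \mapsto T^*$ inverts the strictly increasing map $h$, so capping $A$ by $A_{\max}=h(T_{\text{max}})$ is equivalent to capping $T^*$ by $T_{\text{max}}$.
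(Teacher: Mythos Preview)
Your proposal is correct and follows essentially the same approach as the paper: invoke the first-order characterization $h(T^*)=A$ from Theorem~\ref{thm:optimal_cycle} and the strict monotonicity of $h(T)=\sum_j b_j Q_j P(D_j(T)\geq Q_j+1)$ to conclude that $A<h(T_{\text{max}})$ forces $T^*<T_{\text{max}}$. Your write-up is in fact more careful than the paper's, since you explicitly verify that $A<A_{\max}$ implies the hypothesis $A<\sum_j b_j Q_j$ of Theorem~\ref{thm:optimal_cycle}, a step the paper leaves implicit.
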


For each experiment, we randomly generated the fixed cost~$A$ between $A_{\min}$~and~$A_{\max}$. This ensures that the optimal policies of both types operate within comparable ranges while respecting their structural differences.

Computations were performed using the Swan cluster at the Holland Computing Center (HCC), the high-performance computing core for the University of Nebraska. Each compute node is equipped with dual Intel Xeon Gold 6348 processors (2.60 GHz) with 56 cores and 256GB RAM.

To compute the bounded optimal trigger set control policies and their corresponding costs, we compared two exact solution methods: solving the linear program (LP) formulation defined in Equation~(\ref{eq:LP_Formulation}) using the CPLEX solver and Algorithm \ref{alg:continue_set} based on the ordering of states by $\hat{g}(\bm i)$. Our analysis (given in Appendix~\ref{app:comp_performance}) shows that state ordering is significantly faster while achieving identical results; therefore, we employ this method for our subsequent analysis.

\begin{table}[htp]
\TABLE
{Relative Suboptimality (\%) from $c(\mu_{\alpha^*},I_{\text{min}})$ by Number of Items\label{tab:rel_gaps}}
{\begin{tabular}{@{}l@{\quad}l@{\quad}c@{\quad\quad}c@{\quad\quad}c@{\quad\quad}c@{\quad\quad}@{}}
\hline\up
Items & Method & Mean & Std Dev & Min & Max \\
\hline\up
2 & $c(\mu_{\alpha_{\hat{g}}}, I_{\text{min}})$ & 0.01 & 0.05 & 0.00 & 0.47 \\
  & $c(\mu_{\alpha_G}, I_{\text{min}})$ & 0.04 & 0.16 & 0.00 & 1.49 \\
  & $C_F(T^*,\bm{Q})$ & 9.79 & 6.81 & 0.00 & 33.82 \\
\hline\up
3 & $c(\mu_{\alpha_{\hat{g}}}, I_{\text{min}})$ & 0.01 & 0.02 & 0.00 & 0.10 \\
  & $c(\mu_{\alpha_G}, I_{\text{min}})$ & 0.04 & 0.09 & 0.00 & 0.54 \\
  & $C_F(T^*,\bm{Q})$ & 7.59 & 5.71 & 0.02 & 30.73 \\
\hline\up
4 & $c(\mu_{\alpha_{\hat{g}}}, I_{\text{min}})$ & 0.01 & 0.02 & 0.00 & 0.14 \\
  & $c(\mu_{\alpha_G}, I_{\text{min}})$ & 0.06 & 0.11 & 0.00 & 0.55 \\
  & $C_F(T^*,\bm{Q})$ & 5.53 & 4.62 & 0.04 & 22.76 \\
\hline\up
5 & $c(\mu_{\alpha_{\hat{g}}}, I_{\text{min}})$ & 0.02 & 0.12 & 0.00 & 1.18 \\
  & $c(\mu_{\alpha_G}, I_{\text{min}})$ & 0.06 & 0.07 & 0.00 & 0.33 \\
  & $C_F(T^*,\bm{Q})$ & 4.27 & 4.02 & 0.00 & 17.81 \\
\hline\up
6 & $c(\mu_{\alpha_{\hat{g}}}, I_{\text{min}})$ & 0.02 & 0.05 & 0.00 & 0.31 \\
  & $c(\mu_{\alpha_G}, I_{\text{min}})$ & 0.07 & 0.09 & 0.00 & 0.42 \\
  & $C_F(T^*,\bm{Q})$ & 3.55 & 3.12 & 0.04 & 20.11 \down\\
\hline
\end{tabular}}
{}
\end{table}

Table~\ref{tab:rel_gaps} reports the relative suboptimality, comparing the policy costs relative to the cost of the bounded optimal trigger set control policy $c(\mu_{\alpha^*},I_{\text{min}})$. Suboptimality is calculated as $100\%~\times~\frac{\text{value}~-~c(\mu_{\alpha^*},I_{\text{min}})}{ c(\mu_{\alpha^*},I_{\text{min}})}$, with positive values indicating percentage cost increase relative to the optimal policy. We examine the cost of optimal fixed cycle replenishments $C_F(T^*,\bm{Q})$ as well as the realized bounded policy costs of the trigger set control policy approximations $c(\mu_{\alpha_{\hat{g}}}, I_{\text{min}})$ and $c(\mu_{\alpha_{G}}, I_{\text{min}})$.
Although the approximated policy parameters~$\alpha_{\hat{g}}$ and $\alpha_{G}$ typically underestimate and overestimate the exact parameter $\alpha^*$, their realized policy costs $c(\mu_{\alpha_{\hat{g}}}, I_{\text{min}})$ and $c(\mu_{\alpha_{G}}, I_{\text{min}})$ remain remarkably close to the optimal cost $c(\mu_{\alpha^*},I_{\text{min}})=\alpha^*$.

Table~\ref{tab:comp_times} presents the average computation times across our 100 trials for each solution approach and number of items.
The approximate online control framework using either $\alpha_{\hat{g}}$ or $\alpha_{G}$ and the fixed cycle replenishment calculation $C_F(T^*,\bm{Q})$ offer significant computational advantages for determining policy parameters, especially for larger instances, and produce easily implementable policies. As shown in Table~\ref{tab:comp_times}, these methods are remarkably efficient, with computation times increasing modestly as the number of items grows.

In stark contrast, the exact bounded optimal trigger set control policy $c(\mu_{\alpha^*},I_{\text{min}})$ exhibits exponential growth in computation time, scaling significantly with increasing problem number of items. For a problem with six items, the exact method requires nearly 75 seconds - approximately 1,375 times longer than calculating $\alpha_{\hat{g}}$ and 186,000 times longer than calculating $\alpha_{G}$ for the same problem size. The reason for this exponential growth in computation time is that the state space size equals $\prod_{j=1}^n (Q_j - I_{\text{min}} + 1)$, which grows exponentially with the number of items considered.
Even for moderate problem sizes with 7-10 items, computing the exact optimal policy becomes not just time-consuming but computationally infeasible, making our approximation methods essential for problems with hundreds of different items encountered in practice.

It is important to note that while our approximation methods efficiently calculate the policy parameters $\alpha_{\hat{g}}$ and $\alpha_{G}$, computing the actual long-term average costs $c(\mu_{\alpha_{\hat{g}}}, I_{\text{min}})$ and $c(\mu_{\alpha_{G}}, I_{\text{min}})$ when using these parameters still requires solving the underlying Markov chain—a process computationally similar to finding the bounded optimal cost $c(\mu_{\alpha^*},I_{\text{min}})$. 

\begin{table}[htp]
\TABLE
{Average Computation Times (seconds) by Number of Items\label{tab:comp_times}}
{\begin{tabular}{@{}l@{\quad}ccccc@{}}
\hline\up
Method & 2 & 3 & 4 & 5 & 6 \\
\hline\up
$\alpha_{\hat{g}}$ & 0.0186 & 0.0273 & 0.0358 & 0.0452 & 0.0542 \\
$\alpha_{G}$       & 0.0003 & 0.0003 & 0.0003 & 0.0004 & 0.0004 \\
$C_F(T^*,\bm{Q})$  & 0.0023 & 0.0034 & 0.0045 & 0.0056 & 0.0067 \\
$c(\mu_{\alpha^*},I_{\text{min}})$ 
                  & 0.0035 & 0.0388 & 0.3385 & 5.6927 & 74.5324 \down\\
\hline
\end{tabular}}
{}
\end{table}

Our results highlight the fundamental trade-offs between policy optimality and computational complexity. The following observations summarize our findings.

\begin{observation}
   Fixed cycle replenishment enables immediate cost evaluation without complex Markov chain calculations, making it valuable when exact cost analysis is required.
\end{observation}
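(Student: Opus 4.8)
The plan is to justify this observation by exhibiting, for any prescribed fixed cycle length $T$, a finitely computable closed-form expression for $C_F(T,\bm Q)$, and then contrasting it with the stationary-distribution computation that the trigger set control policy requires. The crux is the expected shortage term $\mathbb{E}[S_j(T,Q_j)] = \mathbb{E}[(D_j(T)-Q_j)^+]$, so the first step is to show this quantity admits a direct evaluation from the Poisson probability mass function rather than any iterative or linear-algebraic procedure.

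First I would establish the shortage identity. Since $D_j(T)\sim\text{Poisson}(\lambda_j T)$ with mean $\mu_j=\lambda_j T$, I would use the elementary identity $(X-k)^+ - (k-X)^+ = X-k$ to write, upon taking expectations,
$$\mathbb{E}[S_j(T,Q_j)] = (\mu_j - Q_j) + \sum_{r=0}^{Q_j}(Q_j-r)\,P(D_j(T)=r).$$
Every term on the right is a direct evaluation of a Poisson pmf at a nonnegative integer. Hence the expected shortage for item $j$ is obtained in $O(Q_j)$ arithmetic operations, the full numerator $A+\sum_j b_j\,\mathbb{E}[S_j(T,Q_j)]$ in $O(\sum_j Q_j)$ operations, and dividing by $T$ yields $C_F(T,\bm Q)$ for any given $T$ with no linear system ever solved.

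Next I would substantiate the ``without complex Markov chain calculations'' clause by contrast. By Lemma~\ref{lemma:avg_cost}, evaluating the cost of a trigger set requires the stationary distribution $\pi_{\bm i}$ over the state space $\mathcal{S}$, whose cardinality grows exponentially in the number of items; and even the streamlined incremental scheme of Theorem~\ref{thm:cycle_decomp} still requires enumerating and sorting the continue states by $\hat g(\cdot)$. The fixed cycle cost, by contrast, collapses the entire regenerative cycle into a single scalar function of $T$ whose evaluation cost depends only on $\sum_j Q_j$ and is independent of any state-space enumeration. This decoupling from the state space is precisely the practical advantage the observation records.

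The main obstacle here is not analytical depth but making the comparison precise, since the statement is qualitative. Concretely, one must confirm that the defining series for $\mathbb{E}[S_j(T,Q_j)]$ converges---which follows immediately from the finiteness of the Poisson mean $\mu_j$---and that the finite-sum reformulation above is exact, so that ``immediate cost evaluation'' denotes a genuine closed-form quantity and not a truncation heuristic. Once this exactness and $O(\sum_j Q_j)$ evaluability of $C_F(T,\bm Q)$ is in hand, the observation follows directly from juxtaposing it against the exponential-state Markov chain requirement of the trigger set alternative.
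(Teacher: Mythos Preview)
Your argument is correct, but you should know that the paper does not actually prove this statement: it is presented as an \emph{observation} summarizing the numerical study in Section~\ref{sec:numerical}, with no accompanying derivation. The supporting evidence in the paper is the timing comparison in Table~\ref{tab:comp_times}, together with the implicit contrast between the closed-form cost $C_F(T,\bm Q)$ of Section~\ref{sec:periodic_policy} and the state-space-dependent evaluations of Section~\ref{sec:optimal_policy}.

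What you have written is therefore more than the paper offers. Your finite-sum identity for $\mathbb{E}[S_j(T,Q_j)]$ is correct and gives an $O(Q_j)$ evaluation; the paper records an equivalent closed form in Lemma~\ref{lemma:expected_shortage} of the Electronic Companion, namely $\mathbb{E}[S_j(T,Q_j)]=\lambda_jT\,P(D_j(T)\ge Q_j)-Q_j\,P(D_j(T)\ge Q_j+1)$, obtained via the Poisson recursion $r\,P(D_j(T)=r)=\lambda_jT\,P(D_j(T)=r-1)$ rather than your $(X-k)^+-(k-X)^+=X-k$ trick. Either route yields a direct scalar evaluation with no linear system, which is exactly the point. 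Your contrast with Lemma~\ref{lemma:avg_cost} and Theorem~\ref{thm:cycle_decomp} makes the qualitative claim precise in a way the paper does not attempt.
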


\begin{observation}
   Fixed cycle replenishment provides significant practical benefits in real-world applications, as fixed replenishment schedules are easier to implement and coordinate than dynamic replenishment alerts. These operational advantages can lead to cost savings that are not captured in our theoretical analysis.
\end{observation}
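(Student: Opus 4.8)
The plan is to acknowledge at the outset that this statement is a qualitative \emph{observation} rather than a formal mathematical claim, so no proof in the deductive sense is available or intended; what is required instead is a structural and managerial justification. The Observation asserts two things: first, that a fixed cycle schedule is operationally simpler to implement and coordinate than a state-dependent trigger policy that fires dynamic replenishment alerts; and second, that this simplicity translates into real cost savings lying outside the model. Accordingly, I would support the statement by appealing to the definitions of the two policies and to practical considerations, rather than by a chain of inequalities.

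First I would contrast the informational and infrastructural requirements of the two policies as they are defined earlier in the paper. The fixed cycle policy of Section~\ref{sec:periodic_policy} is parameterized by a single number $T^*$, determined once via Theorem~\ref{thm:optimal_cycle}, and thereafter demands no real-time monitoring, no recomputation, and no reactive dispatching: a driver simply visits on a predetermined calendar. The trigger set control policy, by contrast, requires continuous evaluation of $\hat{g}(\bm i)$ against the policy parameter at every demand epoch, a live data feed from the IVMs, and the ability to dispatch a driver on short notice whenever the threshold is crossed. This asymmetry in operational overhead is the substance of the ``easier to implement and coordinate'' claim, and I would present it as a direct consequence of the two policies' structure.

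Second I would argue that these overheads correspond to genuine but unmodeled costs—driver scheduling flexibility, route planning, labor contracts tied to fixed calendars, and the coordination of visits across many customer sites—none of which enters the fixed replenishment cost $A$ or the stockout cost $b_j$ in our formulation. Because the theoretical comparison underlying Table~\ref{tab:rel_gaps} charges both policies only $A$ per visit and $b_j$ per shortage, any savings in these omitted categories accrue to the fixed cycle policy without being reflected in the reported suboptimality gap. I would close this part by pointing to the real-world context of the case study in Section~\ref{sec:casestudy} as corroborating evidence.

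The main obstacle is precisely that the claim is not formalizable within the present model: the advantages it cites are, by its own wording, ``not captured in our theoretical analysis,'' so any rigorous quantification would require extending the cost structure to include scheduling, routing, and labor terms that the paper deliberately abstracts away. The honest scope of the argument is therefore a comparison of implementation requirements together with an identification of the cost categories omitted from the model, not a theorem; I would frame the Observation accordingly and not attempt to dress it as a derivation.
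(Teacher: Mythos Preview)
Your proposal is correct and, if anything, goes further than the paper itself: the paper simply states this Observation without any accompanying argument or derivation, treating it as a managerial remark summarizing practical considerations from the numerical study. Your recognition that no deductive proof is possible or intended here, together with your structural contrast of the two policies' implementation requirements, is exactly the right framing.
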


\section{Case Study using Actual Transactional Data}
\label{sec:casestudy}

In this section, we evaluate the performance of our optimal/near-optimal policies using 
actual transaction data from 2023, covering 30 IVMs at a single customer location.  To derive the problem parameters for our analysis, we started with reference data that specified which items were stocked in each vending machine and their respective slot capacities (i.e., $\bm Q$). An item-device combination represents a unique pairing of an item with a specific IVM - for example, safety gloves stored in Machine A would be one item-device combination, while the same safety gloves stored in Machine B would be another combination. We then matched these reference data with the transaction records from 2023. 
During this process, we identified and removed four transactions in which the dispensed quantity exceeded the specified slot capacity, assuming that these were either data entry errors or reflected changes in slot allocations that were not captured in our reference data.

From this cleaned dataset, we obtained the problem parameters for each device-product combination by setting the stockout cost $\bm b$ to the most recent unit price observed in the transaction data. The daily demand rate for each item, $\bm \lambda$ was calculated by summing the total quantity dispensed over the year and dividing by 365 days, while the slot capacities $Q_j$ were taken directly from the reference data. This process resulted in 316 item-device combinations with complete parameter sets (i.e., $\bm b$, $\bm \lambda$, $\bm Q$), forming the basis for our policy evaluations.

To evaluate these policies, we implemented two different approaches. For fixed cycle replenishments, we divided the year into fixed time periods of length $T$ and chronologically processed the transaction data within each period. At the start of each period, we replenished all items to their capacity, then used the actual transaction data during that period to track inventory depletion and calculate stockout costs whenever demand exceeded the available inventory. Similarly, for trigger set control policies, we processed transactions chronologically, maintaining a continuous record of inventory levels and triggering replenishments whenever the state function $\hat{g}(\bm{i})$ exceeded its respective policy parameter. In both cases, stockout costs were incurred when demand exceeded available inventory. For the trigger set control policy, additional stockouts could occur during the lead time~$\tau$ between when a replenishment was triggered and when it was completed.

Using these implementations with a lead time $\tau$ set to 0.5 days, we tested three types of policies for different values of fixed cost,~$A$: fixed cycle replenishments with predetermined periods (i.e., $T=2$,~$4$,~and~$7$ days), a policy using the theoretically optimal cycle length $T^*$ calculated from Theorem~\ref{thm:optimal_cycle}, and trigger set control policies resulting from the approximate online control framework using approximated policy parameters $\alpha_G$ and $\alpha_{\hat{g}}$ calculated from Equations~\eqref{eq:alpha_G} and~\eqref{eq:alpha_ghat}.

\begin{figure}[htbp]
\FIGURE
{\includegraphics[width=0.8\linewidth]{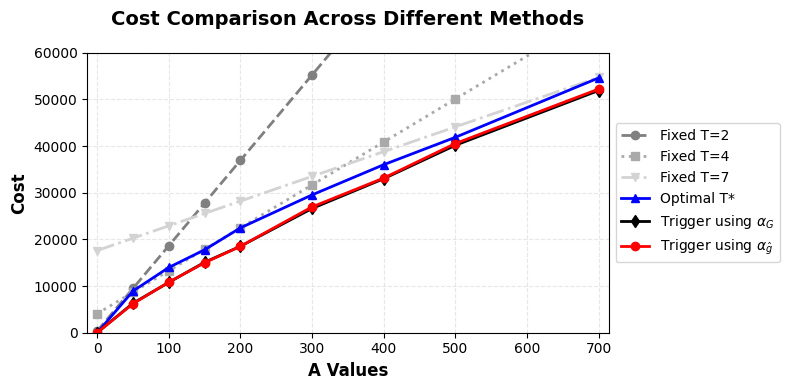}}
{Cost comparison of policies with different fixed costs ($A$) for 
industrial case study\label{fig:cost_comp_total}}
{Using 
actual transaction data, the plot shows total annual costs as a function of fixed replenishment cost~$A$. Fixed cycle replenishments with $T=2,3,4$ days demonstrate how shorter cycles become increasingly expensive as fixed costs rise, while the optimal $T^*$ policy and the trigger set control policies using policy parameters $\alpha_G$ and $\alpha_{\hat{g}}$ achieve lower costs.}
\end{figure}

Our industrial partner currently visits most of its customer sites daily to restock IVMs. Figure~\ref{fig:cost_comp_total} compares the empirical annual costs that result from applying our policies to actual transaction data. The results demonstrate that even at relatively low fixed costs, high-frequency policies (e.g., daily or every two days) quickly become expensive due to the accumulation of fixed costs. In contrast, both the optimal $T^*$ policy and the trigger set control policies demonstrate lower costs across all fixed cost values. Tables~\ref{tab:comprehensive_comparison} and~\ref{tab:policy_comparison} provide detailed breakdowns of policy performance under different fixed cost values.

For fixed cycle replenishment, Table~\ref{tab:policy_comparison} shows detailed cost breakdowns across different fixed costs $A$, including the number of replenishment cycles, fixed costs, and stockout costs. Table~\ref{tab:comprehensive_comparison} further analyzes the performance of the policy in both continuous and integer constrained cycle lengths. Our analysis reveals several key patterns that we summarize in the following. 
\begin{observation}
The empirically found minimum cycle length and the theoretically optimal cycle length $T^*$ produce similar empirical total costs, with the difference typically being less than 10\%. Furthermore, when cycle lengths are limited to integer values for operational feasibility, empirically optimal integer cycles frequently align with theoretical predictions.
\end{observation}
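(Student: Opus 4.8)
The plan is to validate this empirical observation through a structured numerical comparison on the industrial transaction data, reinforced by a structural argument that explains \emph{why} the theoretically optimal cycle length stays near-optimal under realistic, non-Poisson demand. Because the statement is empirical rather than a deductive theorem, the supporting argument combines a reproducible computational protocol with an appeal to the shape of the cost curve established in Theorem~\ref{thm:optimal_cycle}; the numerical evidence itself is read off Tables~\ref{tab:comprehensive_comparison} and~\ref{tab:policy_comparison}.

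First I would fix the data-driven evaluation operator. For any candidate cycle length $T$, let $C_F^{\mathrm{emp}}(T)$ denote the \emph{realized} annualized cost obtained by replaying the 2023 transaction stream exactly as in Section~\ref{sec:casestudy}: partition the horizon into windows of length $T$, reset each item to $Q_j$ at the start of each window, and accumulate the fixed charge $A$ per window plus $b_j$ for every unit of item $j$ demanded while its realized inventory is zero. For each fixed cost $A$ I would then compute two quantities: (i) the theoretical minimizer $T^*$ by solving the first-order condition $\sum_j b_j Q_j P(D_j(T^*)\ge Q_j+1)=A$ from Theorem~\ref{thm:optimal_cycle}, and (ii) the empirical minimizer $T^{\mathrm{emp}}\in\arg\min_{T} C_F^{\mathrm{emp}}(T)$ over a fine grid. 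The first half of the observation is the assertion $\big(C_F^{\mathrm{emp}}(T^*)-C_F^{\mathrm{emp}}(T^{\mathrm{emp}})\big)/C_F^{\mathrm{emp}}(T^{\mathrm{emp}})<0.10$ across the tested values of $A$, which the tables exhibit directly.

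To \emph{explain} the small gap rather than merely report it, I would argue that $C_F^{\mathrm{emp}}$ is a flat, mildly perturbed copy of the theoretical $C_F(\cdot,\bm Q)$ near its minimum. Theorem~\ref{thm:optimal_cycle} shows $C_F(T,\bm Q)$ is smooth with a unique interior minimizer, so its derivative vanishes at $T^*$ and the leading-order penalty for deviating to a nearby $T$ is quadratic; equivalently, a neighborhood of $T^*$ is a near-plateau. Since $C_F^{\mathrm{emp}}$ differs from $C_F$ only by substituting realized aggregate demand for its Poisson expectation, the displacement $|T^{\mathrm{emp}}-T^*|$ is controlled by the demand discrepancy, and the induced cost gap is second order in that displacement, hence small even when $T^{\mathrm{emp}}\neq T^*$. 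For the integer claim I would restrict the grid to integer days, let $T^{\mathrm{int}}=\arg\min_{T\in\mathbb{Z}_{>0}} C_F^{\mathrm{emp}}(T)$, and compare it to the integer nearest $T^*$, denoted $\lfloor T^*\rceil$; the same plateau argument forces $C_F^{\mathrm{emp}}$ to vary little over the integers adjacent to $T^*$, so $T^{\mathrm{int}}$ typically coincides with $\lfloor T^*\rceil$ or an immediate neighbor, which is the asserted frequent alignment.

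The main obstacle is that this is fundamentally a statement about real, non-stationary demand, so no clean deterministic bound is available: the 2023 stream carries weekly seasonality and bursts that violate the stationary-Poisson assumption underlying $T^*$. The hard part is making the flat-cost-curve intuition quantitatively credible without overclaiming. I would address it by reporting the realized curvature of $C_F^{\mathrm{emp}}$ around its minimum and by verifying that $|T^{\mathrm{emp}}-T^*|$ stays inside the plateau region for every tested $A$, so that both the sub-$10\%$ cost gap and the integer alignment are robust structural consequences of the cost geometry rather than coincidences of a particular dataset.
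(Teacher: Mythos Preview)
Your proposal is essentially correct in spirit and matches how the paper supports the observation: the paper does not offer a proof in the deductive sense, it simply reports the observation directly from the case-study numbers in Table~\ref{tab:comprehensive_comparison}, obtained by exactly the replay-and-compare protocol you describe for $C_F^{\mathrm{emp}}(T^*)$ versus $C_F^{\mathrm{emp}}(T^e)$ across the tested values of $A$.

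Where you differ is in adding a structural \emph{explanation}---the flatness of $C_F(\cdot,\bm Q)$ near $T^*$ via the vanishing first derivative from Theorem~\ref{thm:optimal_cycle}, and the resulting second-order cost penalty for small displacements of the minimizer. The paper does not make this argument at all; it treats the observation as a purely empirical finding and leaves the ``why'' implicit. Your plateau reasoning is a sound heuristic justification and arguably more informative than what the paper provides, though you should be careful not to overstate it: the second-derivative expression in Lemma~\ref{lem:secondDerivativeCostFunction} is for the \emph{theoretical} cost, and transferring curvature bounds to the empirical cost curve requires an additional (unproven) stability assumption about how closely realized stockouts track their Poisson expectations. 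The paper sidesteps this by simply tabulating the outcomes.
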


\begin{observation}
The theoretical model consistently predicts lower costs than those observed in practice. This discrepancy likely stems from the actual demand patterns deviating from our Poisson assumption. In particular, we observe that when an item is demanded on a given day, it tends to be demanded multiple times that day - for instance, when safety gloves are needed, multiple workers typically require them. 
\end{observation}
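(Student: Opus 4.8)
The plan is to formalize the observation as a stochastic-dominance statement: the calibrated Poisson model and the true clustered demand share the same first moment (by construction of the demand-rate estimates $\lambda_j$), but the true demand is \emph{overdispersed}, and this overdispersion inflates exactly the stockout term of $C_F(T,\bm Q)$ while leaving the fixed-cost term $A/T$ untouched. Concretely, I would model the realized demand of item $j$ over a cycle of length $T$ as a mixed Poisson variable $D_j^{\mathrm{mix}}(T)$ with $D_j^{\mathrm{mix}}(T)\mid\Theta_j\sim\mathrm{Poisson}(\Theta_j)$, where the random intensity $\Theta_j$ satisfies $\mathbb{E}[\Theta_j]=\lambda_j T$. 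This captures the empirical remark that some days are high-rate ``glove days'' and others are not: the calibration matches the mean $\lambda_j$ but discards the day-to-day variability encoded in $\Theta_j$. A degenerate $\Theta_j$ recovers the model exactly, while any non-degenerate $\Theta_j$ represents clustering.

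The heart of the argument is a convexity lemma for the Poisson family. The per-item stockout cost is $b_j\,\mathbb{E}[(D_j(T)-Q_j)^+]$, and $x\mapsto(x-Q_j)^+$ is convex. I would first show that for any convex $\phi$ the map $g(\theta):=\mathbb{E}[\phi(\mathrm{Poisson}(\theta))]$ is itself convex in $\theta$. This follows from the differentiation identity $g'(\theta)=\mathbb{E}[\phi(P_\theta+1)-\phi(P_\theta)]$ with $P_\theta\sim\mathrm{Poisson}(\theta)$, iterated once to give $g''(\theta)=\mathbb{E}[\phi(P_\theta+2)-2\phi(P_\theta+1)+\phi(P_\theta)]\ge0$, since the second forward difference of a convex sequence is nonnegative. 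Applying Jensen's inequality over $\Theta_j$ then yields $\mathbb{E}[\phi(D_j^{\mathrm{mix}}(T))]=\mathbb{E}_{\Theta_j}[g(\Theta_j)]\ge g(\mathbb{E}[\Theta_j])=g(\lambda_j T)=\mathbb{E}[\phi(\mathrm{Poisson}(\lambda_j T))]$, i.e. the model demand is dominated in the convex order by the clustered demand. Specializing to $\phi(x)=(x-Q_j)^+$ gives $\mathbb{E}[S_j^{\mathrm{mix}}(T,Q_j)]\ge\mathbb{E}[S_j(T,Q_j)]$ for every item $j$ and every $T$.

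Summing over items, weighting by $b_j$, and reinstating the common deterministic term $A/T$ shows that the realized average cost satisfies $C_F^{\mathrm{mix}}(T,\bm Q)\ge C_F(T,\bm Q)$ \emph{pointwise in} $T$; taking minima over $T$ then transfers the inequality to the optimized policies, so the model systematically underpredicts both the per-schedule and the optimized cost, which is precisely the observed direction. Strictness — explaining why the gap is \emph{consistently} positive rather than occasionally zero — follows because $\phi(x)=(x-Q_j)^+$ has a strictly positive second difference at $x=Q_j$, so whenever $\Theta_j$ is non-degenerate and places mass near $Q_j/T$ the inequality is strict for that item. The same convexity observation extends verbatim to the trigger-set cost functions $G(\bm i)$ and $\hat g(\bm i)$, since both are nonnegative combinations of the shortage and tail functionals $b_j\sum_{r\ge i_j}(r-i_j)P(D_j(\tau)=r)$ and $b_j\,P(D_j(\tau)\ge i_j)$, each a convex functional of demand; hence the underprediction is not an artifact of the fixed-cycle analysis.

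The main obstacle is conceptual rather than computational: the statement is empirical, so the work lies in choosing a demand model that is simultaneously faithful to the observed clustering and reducible to the calibrated Poisson model in the mean. The mixed-Poisson formulation does both, and it is essential that the dominance be established in the \emph{convex} order — a mere variance comparison would be too weak to control $\mathbb{E}[(D-Q_j)^+]$ for arbitrary thresholds $Q_j$. The one technical point requiring care is verifying the convexity of $g(\theta)$ through the second-difference identity (equivalently, that $\theta\mapsto\mathbb{E}[\phi(\mathrm{Poisson}(\theta))]$ inherits convexity from $\phi$); once that lemma is in place, Jensen's inequality and the linearity of $C_F$ in the per-item shortages deliver the result.
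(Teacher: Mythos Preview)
Your argument is mathematically correct and elegant, but it is categorically different from what the paper does. In the paper, this statement is an \emph{empirical observation}, not a theorem: its justification is simply reading off Tables~\ref{tab:comprehensive_comparison} and~\ref{tab:policy_comparison}, where the ``Theoretical'' column is consistently below the realized ``Total'' column across all fixed-cost values tested. The paper offers no formal argument beyond this tabulated comparison and the informal remark that batch-type demand patterns in the transaction log (the safety-gloves example) are a plausible cause.

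What you have written is a genuine theoretical complement: you posit a mixed-Poisson model that matches the calibrated mean $\lambda_j T$ but allows overdispersion, prove via the second-difference identity that $\theta\mapsto\mathbb{E}[\phi(\mathrm{Poisson}(\theta))]$ inherits convexity from $\phi$, and then invoke Jensen to obtain convex-order dominance of the clustered demand over the model demand. This correctly yields $\mathbb{E}[S_j^{\mathrm{mix}}(T,Q_j)]\ge\mathbb{E}[S_j(T,Q_j)]$ and hence $C_F^{\mathrm{mix}}(T,\bm Q)\ge C_F(T,\bm Q)$ pointwise in $T$, with the inequality surviving the minimization. The extension to $G(\bm i)$ and $\hat g(\bm i)$ via their representation as nonnegative combinations of convex demand functionals is also sound.

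So: the paper supplies the data, you supply the mechanism. Your proposal does not reproduce the paper's ``proof'' because there is none to reproduce; rather, it rigorously formalizes the paper's heuristic explanation. The one caveat is that your inequality compares the model to the \emph{expected} cost under mixed-Poisson demand, whereas the tables report a single year's realization --- your argument explains the systematic direction of the bias but does not by itself guarantee the sign in every finite sample.
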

Despite these deviations from theoretical predictions, our analysis reveals that the theoretical model provides valuable practical guidance.

\begin{table}
\TABLE
{Fixed Cycle Replenishment Performance (Annual)\label{tab:comprehensive_comparison}}
{\begin{tabular}{@{}r@{\quad}|@{\quad}cccc@{\quad}|@{\quad}cccc@{}}
\hline\up
 & \multicolumn{4}{c@{\quad}|@{\quad}}{Continuous} & \multicolumn{4}{c}{Integer-Constrained} \\
\hline\up
A & $T^*$ & Cost & $T^e$ & Cost & $T^*_{int}$ & Cost & $T^e_{int}$ & Cost \\
\hline\up
100 & 4.52 & 13,990.37 & 3.82 & 12,544.41 & 4 & 13,286.15 & 4 & 13,286.15 \\
150 & 4.79 & 17,779.59 & 3.49 & 17,469.27 & 5 & 18,852.89 & 4 & 17,886.15 \\
200 & 4.99 & 22,477.17 & 4.89 & 21,329.03 & 5 & 22,502.89 & 4 & 22,486.15 \\
400 & 5.50 & 36,012.84 & 6.10 & 35,582.86 & 5 & 37,102.89 & 6 & 36,243.43 \\
500 & 5.68 & 41,881.00 & 6.08 & 41,579.31 & 6 & 42,343.43 & 6 & 42,343.43 \\
700 & 5.97 & 54,610.44 & 6.57 & 53,155.51 & 6 & 54,543.43 & 6 & 54,543.43 \down\\
\hline
\end{tabular}}
{The table compares empirical costs when using different cycle lengths. For continuous and integer cycle lengths, we show costs using both the theoretical optimal ($T^*$) and empirically best ($T^e$) cycles.}
\end{table}

For the trigger set control policy, we used our two approximations, $\alpha_G$ and $\alpha_{\hat{g}}$. As shown in Table~\ref{tab:policy_comparison}, both variants achieve comparable empirical total costs. 

\begin{observation}
Despite slight differences between the values of $\alpha_G$ and $\alpha_{\hat{g}}$, both variants of the approximate online control policy
achieve comparable empirical total costs. This shows that the online policy is relatively robust. 
Moreover, both variants consistently outperform the fixed cycle replenishment across all tested fixed cost values.
\end{observation}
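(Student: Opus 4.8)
The plan is to treat this as a two-part empirical claim — a \emph{robustness} statement comparing $c(\mu_{\alpha_G})$ with $c(\mu_{\alpha_{\hat{g}}})$, and a \emph{dominance} statement comparing both against $C_F(T^*,\bm Q)$ — and to support each part by combining the structural results already established with direct evaluation on the replayed 2023 transaction data. First I would make precise that both approximate policies share an identical decision rule, continuing exactly when $\hat{g}(\bm i)\le\alpha'$ and stopping otherwise, so that they differ only through the scalar threshold $\alpha'\in\{\alpha_G,\alpha_{\hat{g}}\}$. This reframes the robustness question as one of sensitivity of realized cost to a single parameter.

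For the robustness half I would exploit the threshold structure from Theorem~\ref{thm:threshold}: the two realized continue sets can disagree only on states whose $\hat{g}(\cdot)$ value lies in the interval $[\min(\alpha_G,\alpha_{\hat{g}}),\,\max(\alpha_G,\alpha_{\hat{g}})]$. Using the incremental cost decomposition of Theorem~\ref{thm:cycle_decomp}, each such boundary state contributes only $\frac{\hat{g}(\bm i)}{\Lambda}\rho_{\bm i}$ to the cost numerator and $\frac{1}{\Lambda}\rho_{\bm i}$ to the expected cycle length, so the cost viewed as a function of the threshold is flat near its minimum; a narrow threshold gap therefore induces only a small cost gap. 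I would reinforce this with Proposition~\ref{thm:policy_bounds}, which, under the model, forces each realized cost to lie between $\alpha^*$ and its own threshold, pinning the two costs close to one another whenever $\alpha_G$ and $\alpha_{\hat{g}}$ both sit near $\alpha^*$.

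For the dominance half I would argue structurally and then certify empirically. Structurally, the trigger-set policy reads the true inventory vector and replenishes only when the forward stockout-cost rate $\hat{g}(\bm i)$ crosses the threshold, avoiding both premature visits (which waste the fixed cost $A$) and late visits (which accrue stockouts), whereas a fixed cycle commits to one schedule regardless of realized depletion; under the Poisson model this is exactly the state-dependent advantage noted at the start of Section~\ref{sec:numerical}. Empirically, I would compute, for each tested value of $A$, the total annual cost of every policy by chronologically replaying the transactions — restocking to capacity at triggers or cycle starts and accumulating fixed and stockout costs — and then verify that both $c(\mu_{\alpha_G})$ and $c(\mu_{\alpha_{\hat{g}}})$ fall below $C_F(T^*,\bm Q)$ at each $A$, as displayed in Figure~\ref{fig:cost_comp_total} and Table~\ref{tab:comprehensive_comparison}.

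The hard part will be that the claim concerns \emph{empirical} costs under real demand, which is clustered rather than Poisson, so the guarantees of Proposition~\ref{thm:policy_bounds} and the model-based superiority of trigger-set over fixed-cycle do not transfer automatically. A rigorous distribution-free dominance proof seems out of reach; the honest route is to lean on the structural intuition for why state-dependence helps and then certify both the small robustness gap and the uniform outperformance by exhaustive evaluation across the tested fixed-cost grid, making explicit that ``consistently'' and ``comparable'' are statements about the finite set of $A$ values examined rather than universal theorems.
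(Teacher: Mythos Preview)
Your proposal is considerably more elaborate than the paper's own treatment. In the paper this statement is an \emph{Observation}, not a theorem: the authors do not attempt a proof at all but simply report what the numbers in Table~\ref{tab:policy_comparison} show. The entire supporting argument in the paper is the sentence ``As shown in Table~\ref{tab:policy_comparison}, both variants achieve comparable empirical total costs,'' together with the tabulated annual costs across the six tested values of $A$. Your final paragraph correctly identifies this as the honest route, and that is precisely all the paper does.

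The structural scaffolding you build --- the threshold-gap argument via Theorem~\ref{thm:threshold}, the incremental sensitivity argument via Theorem~\ref{thm:cycle_decomp}, and the sandwiching via Proposition~\ref{thm:policy_bounds} --- is genuinely different from, and richer than, the paper's approach. It buys an \emph{explanation} for why robustness and dominance should be expected under the model, whereas the paper offers only the empirical certification. You are also right to flag that these model-based bounds do not transfer rigorously to the replayed non-Poisson data, which is presumably why the paper labels this an Observation rather than a Proposition.

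One minor correction: for the dominance comparison you point to Table~\ref{tab:comprehensive_comparison}, but that table contains only fixed-cycle results. The side-by-side comparison of $c(\mu_{\alpha_G})$, $c(\mu_{\alpha_{\hat g}})$, and $C_F(T^*,\bm Q)$ across the $A$-grid is in Table~\ref{tab:policy_comparison}; that is the table you need to cite for both halves of the claim.
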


\begin{table}
\TABLE
{Comparison of fixed cycle and trigger set control policies for different fixed costs $A$.\label{tab:policy_comparison}}
{\begin{tabular}{@{}r@{\quad}|@{\quad}l@{\quad}|@{\quad}rrrr@{\quad}|@{\quad}r@{}}
\hline\up
A & Policy & Cycles & Fixed & Stockout & Total & Theoretical \\
\hline\up
100 & Fixed Cycle ($T=1$) & 365 & 36,500.00 & 25.98 & 36,525.98 & 36,503.41 \\
    & Fixed Cycle ($T =T^*$) & 81 & 8,100.00 & 5,890.37 & 13,990.37 & 9,597.33 \\
    & Trigger $\alpha_G$ & 91 & 9,100.00 & 1,686.27 & 10,786.27 & 7,467.90 \\
    & Trigger $\alpha_{\hat{g}}$ & 89 & 8,900.00 & 1,946.62 & 10,846.62 & 7,862.10 \\
\hline\up
150 & Fixed Cycle ($T=1$) & 365 & 54,750.00 & 25.98 & 54,775.98 & 54,753.41 \\
    & Fixed Cycle ($T = T^*$) & 77 & 11,550.00 & 6,229.59 & 17,779.59 & 13,508.66 \\
    & Trigger $\alpha_G$ & 85 & 12,750.00 & 2,354.58 & 15,104.58 & 10,829.55 \\
    & Trigger $\alpha_{\hat{g}}$ & 85 & 12,750.00 & 2,250.99 & 15,000.99 & 11,362.45 \\
\hline\up
200 & Fixed Cycle ($T=1$) & 365 & 73,000.00 & 25.98 & 73,025.98 & 73,003.41 \\
    & Fixed Cycle ($T =T^*$) & 74 & 14,800.00 & 7,677.17 & 22,477.17 & 17,236.87 \\
    & Trigger $\alpha_G$ & 79 & 15,800.00 & 2,726.11 & 18,526.11 & 14,089.00 \\
    & Trigger $\alpha_{\hat{g}}$ & 78 & 15,600.00 & 2,925.48 & 18,525.48 & 14,735.05 \\
\hline\up
400 & Fixed Cycle ($T=1$) & 365 & 146,000.00 & 25.98 & 146,025.98 & 146,003.41 \\
    & Fixed Cycle ($T =T^*$) & 67 & 26,800.00 & 9,212.84 & 36,012.84 & 31,108.43 \\
    & Trigger $\alpha_G$ & 64 & 25,600.00 & 7,414.60 & 33,014.60 & 26,502.65 \\
    & Trigger $\alpha_{\hat{g}}$ & 64 & 25,600.00 & 7,550.26 & 33,150.26 & 27,455.30 \\
\hline\up
500 & Fixed Cycle ($T=1$) & 365 & 182,500.00 & 25.98 & 182,525.98 & 182,503.41 \\
    & Fixed Cycle ($T =T^*$) & 65 & 32,500.00 & 9,381.00 & 41,881.00 & 37,640.69 \\
    & Trigger $\alpha_G$ & 61 & 30,500.00 & 9,672.20 & 40,172.20 & 32,536.10 \\
    & Trigger $\alpha_{\hat{g}}$ & 60 & 30,000.00 & 10,550.29 & 40,550.29 & 33,572.70 \\
\hline\up
700 & Fixed Cycle ($T=1$) & 365 & 255,500.00 & 25.98 & 255,525.98 & 255,503.41 \\
    & Fixed Cycle ($T =T^*$) & 62 & 43,400.00 & 11,210.44 & 54,610.44 & 50,172.99 \\
    & Trigger $\alpha_G$ & 50 & 35,000.00 & 16,913.81 & 51,913.81 & 44,311.00 \\
    & Trigger $\alpha_{\hat{g}}$ & 49 & 34,300.00 & 17,920.72 & 52,220.72 & 45,544.70 \down\\
\hline
\end{tabular}}
{The table shows empirical cost components and number of replenishment cycles for each policy type, along with theoretical cost predictions/approximations $C_F(T=1, \bm Q),C_F(T^*, \bm Q),\alpha_G, \alpha_{\hat{g}}$. Fixed cycle replenishments are shown for both daily replenishment ($T=1$, current practice) and optimal cycle length ($T^*$), while trigger set results use policy parameters $\alpha_G$ and $\alpha_{\hat{g}}$. All costs are reported on an annual basis.}
\end{table}

Based on the data in Table~\ref{tab:policy_comparison}, the fixed cycle replenishment using optimal cycle lengths ($T^*$) dramatically outperforms the current practice of daily replenishment, showing potential cost reductions of 61.7 to 78.6\% across different fixed cost values. The trigger set control policies provide additional improvements of 4.1 to 22.9 \% compared to the optimal fixed cycle approach. While these results suggest substantial potential savings, it is important to note that our industrial partner's current daily replenishment practice is driven by their strong emphasis on service quality and minimizing stockouts. Nevertheless, from a purely cost perspective, our approach demonstrates significant potential for cost reduction, though any implementation would need to carefully balance these savings against service quality considerations.

The case study demonstrates that our methodologies can effectively handle industrial problems in the real world involving hundreds of items across multiple IVMs. The trigger set control policy, using either approximation, consistently achieves lower costs than fixed cycle replenishment, showing the potential for significant cost savings in practice. Although fixed cycle replenishments may offer operational advantages that could lead to additional savings not captured in our analysis, our theoretical results provide valuable guidance for both policy types. For fixed cycle replenishments, the theoretical optimal cycle length closely matches empirically optimal frequencies, while for trigger set control policies, our approximations prove both computationally efficient and highly effective at reducing total costs. 

\section{Conclusions and Future Work}
\label{sec:Conclusions}

This paper addresses replenishment challenges in contexts that utilize industrial vending machines, a growing technology in the industrial distribution sector. We develop and analyze two replenishment policies: a trigger set control policy and a fixed cycle replenishment. Our theoretical analysis of these control policies provides several key insights. First, we prove that the optimal trigger set control policy exhibits a monotonicity property, allowing efficient implementation through an approximate online control framework. Second, we derive the optimal cycle length for fixed cycle replenishment and show that it can effectively balance fixed replenishment costs against increasing stockout risks.

Testing these policies using actual transaction data reveals important practical insights. Although both policies show gaps between theoretical predictions and actual performance, these gaps are relatively small, typically less than 10\% for fixed cycle replenishment. Moreover, when constrained to integer cycle lengths for operational feasibility, empirically optimal cycles frequently align with theoretical predictions, particularly at higher fixed costs. These findings suggest that our theoretical framework can provide valuable practical guidance.

The trigger set control policy, which leverages the real-time stock level monitoring capabilities of IVMs, consistently outperforms the fixed cycle replenishment in our tests, with both approximations ($\alpha_G$ and $\alpha_{\hat{g}}$) achieving comparable results. However, it is worthwhile to note that fixed cycle replenishments may offer practical advantages in implementation (e.g., simpler scheduling and coordination), and these operational benefits may make fixed cycle replenishments more attractive.  

Our work opens several promising directions for future research. First, extending our analysis to incorporate capacity constraints in delivery vehicles would add practical value, as this is a common constraint in real-world operations. Second, our analysis of actual transaction data reveals that demand often occurs in batches - when an item is demanded on a given day, it is typically demanded multiple times. This observation suggests that a compound Poisson demand process might better represent the actual demand patterns than our current Poisson assumption. Third, the development of methods to optimize slot allocations represents a significant opportunity for cost reduction. The slot allocation problem involves determining how many slots to assign to each product within an IVM. Our LP formulation to optimize the trigger set provides a promising foundation for this extension. By incorporating slot allocations as decision variables in the LP formulation, we could simultaneously solve optimal replenishment policies and slot allocations. By jointly optimizing slot allocations and replenishment policies, firms could achieve substantially lower operating costs while maintaining high service levels. 

Our results provide industrial distributors with theoretically grounded yet practical tools for optimizing IVM replenishment operations. The framework we develop can help industrial distribution firms balance operational costs against service levels while considering constraints such as practically meaningful (integer) cycle lengths and lead times.

\ACKNOWLEDGMENT{We gratefully acknowledge our industrial partner for providing the data that made this research possible. Their support and collaboration have been invaluable in advancing our study. The views and conclusions expressed in this study are solely those of the authors and do not necessarily reflect the opinions of any company.
}


\bibliographystyle{informs2014} 
\bibliography{biblioKarina} 





%
%
%
\newpage

\ECSwitch 

\ECHead{E-Companion: Proofs and Additional Details}

\section{Notation}
\label{sec:notation}
\begingroup
\renewcommand{\arraystretch}{1} 
\small
\begin{longtable}{|p{0.09\textwidth}|p{0.87\textwidth}|}
\caption{Notation and definitions} \label{tab:notation} \\
\hline
\multicolumn{2}{|l|}{\textbf{System Parameters}} \\
\hline
\endfirsthead
\multicolumn{2}{l}{\textit{Continued from previous page}} \\
\hline
\endhead
\hline
\multicolumn{2}{r}{\textit{Continued on next page}} \\
\endfoot
\hline
\endlastfoot
$J$ & Number of unique items $J$\\
$Q_j$ & Number of slots allocated to item $j$ \\
$\bm{Q}$ & Vector of slot allocations $(Q_1, Q_2, \ldots, Q_J)$ \\
$\lambda_j$ & Demand rate for item $j$ per unit time \\
$\Lambda$ & Total demand rate, $\sum_{j=1}^J \lambda_j$ \\
$p_j$ & Probability of demand for item $j$, equal to $\lambda_j/\Lambda$ \\
$\tau$ & Lead time for replenishment \\
\hline
\multicolumn{2}{|l|}{\textbf{Cost Parameters}} \\
\hline
$A$ & Fixed cost per replenishment \\
$b_j$ & Stockout cost per unit of item $j$ \\
$A_{\min}$ & Minimum ordering cost making it optimal to continue in initial state \\
$A_{\max}$ & Maximum ordering cost ensuring finite  replenishment cycles\\
\hline
\multicolumn{2}{|l|}{\textbf{State Space and Policy Variables}} \\
\hline
$\bm {i}$ & System state vector $(i_1, i_2, \ldots, i_J)$ where $i_j$ is inventory level of item $j$ \\
$\bm{i^{-j}}$ & System state vector $i$  with one fewer unit of item $j$ $(i_1, i_2, \ldots, i_j -1, \ldots, i_J)$ \\
$\bm{i^0}$ & Initial state (typically equals $\bm{Q}$) \\
$\bm{i^n}$ & State after $n$ transitions \\
$\mathcal{S}$ & State space \\
$\mathcal{U}$ & Set of states where replenishment is triggered (trigger set) \\
$\overline{\mathcal{U}}$ & Set of continuation states (no replenishment) \\
$\mathcal{U}_r$ & Set of required stopping states \\
$\mathcal{T}_r$ & Set of required stopping states in entrance fee problem \\
$\mathcal{T}_c$ & Set of required continuation states in entrance fee problem \\
$I_{\text{min}}$ & Minimum allowed inventory level \\
$\bm I(t)$ & System state at time $t$ \\
$k^*$ &  Critical state index under optimal trigger set control policy (the index that determines the boundary between continuation and triggering replenishment) \\
\hline
\multicolumn{2}{|l|}{\textbf{Random Variables and Functions}} \\
\hline
$D_j(t)$ & Random demand for item $j$ during time interval $t$ \\
$G(\bm {i})$ & Expected immediate cost of stopping in state $\bm {i}$ \\
$\hat{g}(\bm {i})$ & Incentive fee function that determines optimal stopping decisions \\
$f'(\bm {i})$ & Entrance fee at state $\bm {i}$ \\
$f(\bm {i})$ & Cost of continuing one more step from state $\bm {i}$ \\
$F(\bm {i})$ & Terminal payoff function in stopping problem \\
$F'(\bm {i})$ & Terminal payoff function in entrance fee problem (which is equal to zero) \\
$\kappa(\bm {i})$ & Index mapping function $\kappa: \mathcal{S} \rightarrow \{1,\ldots,|\mathcal{S}|\}$ that orders states by increasing $\hat{g}(\bm {i})$ values \\
$\kappa^{-1}(k)$ & State vector with $k$th smallest entrance fee, inverse of function $\kappa(\bm {i})$  \\
$P(\bm m|\bm i)$ & Transition probability from state $\bm i$ to state $\bm m$ \\
$P_{\mathcal{U}}(\bm m|\bm i)$ & Transition probability under stopping set $\mathcal{U}$ \\
$\delta(\bm m,\bm{Q})$ & Binary indicator function (1 if $k = \bm{Q}$, 0 otherwise) \\
$h(k)$ & Non-decreasing function used in monotonicity proofs \\
$\rho_{\bm i}$&  Probability of being in state $\bm i$ during a cycle\\
\hline
\multicolumn{2}{|l|}{\textbf{Performance Measures and Policies}} \\
\hline
$\alpha^*$ & Optimal long-run expected average cost per unit time \\
$\alpha_G$ & Approximated $\alpha^*$ based on expected final stopping cost \\
$\alpha_{\hat{g}}$ & Approximated $\alpha^*$ based on entrance fees \\
$C_F(T,\bm{Q})$ & Long-run expected average cost per unit time for fixed cycle replenishment with cycle of $T$ time units \\
$T^*$ & Optimal cycle length for fixed cycle replenishment \\
$T^*_{\text{Int}}$ & Integer-valued optimal cycle length \\
$T_{\text{max}}$ & Minimum time until fastest-depleting product reaches $I_{\text{min}}$ \\
$C(\bm{i}^k)$ & Cost incurred at state $\bm{i}$ visited at step $k$ \\
$T(\bm{i}^k)$ & Time spent in state $\bm{i}$ visited at step $k$ \\
$TC$ & Random cost per cycle \\
$T$ & Random length of cycle \\
$S_j(T,Q_j)$ & Expected shortage of item $j$ during time $T$ with initial inventory $Q_j$ \\
$\mu_{\alpha}(\bm {i})$ & Online control policy using $\alpha$ as the decision parameter\\
$c(\mu_{\alpha})$ & Long-term average cost under policy $\mu_{\alpha}$ \\
$c(\mu, I_{\text{min}})$ & Long-term average cost of policy $\mu$ bounded by minimum allowed inventory level $I_{\text{min}}$\\
\hline
\multicolumn{2}{|l|}{\textbf{Variables in Linear Programs}} \\
\hline
$\pi_{\bm i}$ & Stationary probability distribution for state $\bm i$ \\
$\pi_{\bm i}(m)$ & Proportion of periods in state $\bm i$ during first $m$ transitions \\
$r_{\bm i}$ &  Linearization variable for state $\bm i$ \\
$y_{\bm i}$ & Linearization variable $r_{\bm i}$ for stopping states ($\bm i \in \mathcal{U}$) \\
$z_{\bm i}$ & Linearization variable $r_{\bm i}$ for continuation states ($\bm i \in \overline{\mathcal{U}}$) \\
\end{longtable}
\endgroup

\section{Proofs for the Trigger Set Control Policies}
\label{sec:Proofs for Trigger Set Control Policy}
This section provides proofs for the results in Section \ref{sec:optimal_policy}.


First, we derive the long-run expected average cost per unit time given a trigger set $\mathcal{U}$:

\avgcostprop*
\begin{proof}
{\it Proof. }
Using uniformization rate of ~$\Lambda$ to model the state transitions and denoting the state visited after $k$ transitions as ${\bm i^k}$, the long-run expected average cost per unit time for trigger set~$\cal U$ is 
\begin{eqnarray}
    \lim_{m \to \infty} \frac{\frac{1}{m}\sum_{k=0}^{m-1}C({\bm i^k})}{\frac{1}{m}\sum_{k=0}^{m-1}T({\bm i^k})}~,
\end{eqnarray}
where $C({\bm i^k})$ and $T({\bm i^k})$ denote the cost incurred and the amount of time spent in state ${\bm i^k}$, respectively. The terms $\frac{1}{m}\sum_{k=0}^{m-1}C({\bm i^k})$ and $\frac{1}{m}\sum_{k=0}^{m-1}T({\bm i^k})$ represent sample averages which, by the ergodic theorem for Markov chains, converge to their respective expected values as $m \to \infty$. This formulation allows us to express the long-run average cost using the limiting behavior of these sample means. We can write this expression as 
\begin{eqnarray}
    \lim_{m \to \infty} \frac{\frac{1}{m}\sum_{k=0}^{m-1}C({\bm i^k})}{\frac{1}{m}\sum_{k=0}^{m-1}T({\bm i^k})} &=& \lim_{m \to \infty} \frac{\sum_{\bm i} C(\bm i)\pi_{\bm i}(m)}{\sum_{\bm i }T(\bm i)\pi_{\bm i}(m)}~,
\end{eqnarray}
where $\pi_{\bm i}(m)$ denotes the proportion of times we are in state $\bm i $ during the first $m$ transitions. Since the chain defined by this process is ergodic with infinite visits to $\bm{Q}$ (i.e., full IVM), we have $\lim_{m \to \infty} \pi_{\bm i}(m) = \pi_{\bm i}$, defining the stationary distribution at transitions of the discrete-time Markov chain. We are ready to derive the expression for the expected long-term average cost per unit time under the trigger set $\cal U$ as
\begin{eqnarray}
    \alpha_\cal U=\lim_{m \to \infty} \frac{\sum_{\bm i} C(\bm i)\pi_{\bm i}(m)}{\sum_{\bm i }T(\bm i)\pi_{\bm i}(m)} = \frac{\sum_{\bm i} C(\bm i)\pi_{\bm i}}{\sum_{\bm i }T(\bm i)\pi_{\bm i}} 
    &=&\frac{\sum_{\bm i \in \cal {\bar U}} C(\bm i)\pi_{\bm i}+\sum_{\bm i \in \cal {U}}C(\bm i)\pi_{\bm i}}{\sum_{\bm i \in \cal {\bar U}} T(\bm i)\pi_{\bm i}+\sum_{\bm i \in \cal {U}}T(\bm i)\pi_{\bm i}} \nonumber \\
    &=&\frac{\sum_{\bm i \in \cal {U}}G(\bm i)\pi_{\bm i}}{({1}/{\Lambda})\;\sum_{\bm i \in \cal {\bar U}} \pi_{\bm i}+\tau\;\sum_{\bm i \in \cal {U}}\pi_{\bm i}} ~\label{longterm_avg_cost_alpha},
\end{eqnarray}
which follows from the fact that average time at ``continue'' states is $1/\Lambda$, time at all triggering states is $\tau$, and costs equal to $G(\bm i)$ are only incurred at triggering states. The stationary distribution, $\pi_{\bm i}$ for all $\bm i \in \cal S$ can be obtained by solving 
\begin{eqnarray}
    \pi_{\bm m} &=& \sum_{\bm i} {P}_{\cal U}(\bm m|{\bm i}) \pi_{\bm i}, \quad \text{ and } \quad  
    \sum_{\bm i} \pi_{\bm i} =1 ~,
\end{eqnarray}
The one-step transition probability from state $\bm{i}$ to $\bm{m}$ under the stopping set $\cal U$, i.e., ${P}_{\cal U}({\bm m}|{\bm i})$, is 
\begin{eqnarray}
    {P}_{\cal U}({\bm m}|{\bm i})=\begin{cases}
        {P}({\bm m}|{\bm i})~, \quad &\text{ if } {\bm i} \notin \cal U~, \\
        \delta({\bm m},\bm Q)~, \quad &\text{ if } {\bm i} \in \cal U~, \\
    \end{cases}
\end{eqnarray}
where the binary indicator function, $\delta(\bm m,\bm Q)=1$ if $\bm m= \bm Q$ and takes a value of zero otherwise. The transition probabilities of the uniformized chain, denoted as $P(\bm m|{\bm i})$, are relatively simple; transitions from continue states $\bm i$ are only possible to states with one fewer inventory of one of the $J$ items. Arrival of demand for item $j$ transitions the system to state ${\bm i}^{-j}=(i_i, i_2, \dots, i_j-1, \dots, i_J)$, which occurs with probability $p_j = \lambda_j/\Lambda$. 

Plugging the definition of $P_{\cal U}(\bm m|\bm i)$ above, we get the equations for the stationary distribution under trigger set ${\cal U}$ in terms of the original transition probabilities of the uniformized chain ${P}(\bm m|\bm i)$~as 
\begin{eqnarray}
\displaystyle
\pi_{\bm m} = \delta(\bm m,\bm Q) \sum_{\bm i \in \cal U} \pi_{\bm i} + \sum_{\bm i \in \overline{\cal U}} {P}(\bm m|\bm i) \pi_{\bm i} ~, \quad \text{ and } \quad 
\sum_{\bm i \in \cal U} \pi_{\bm i} + \sum_{\bm i \in \overline{\cal U}} \pi_{\bm i} =1~,
\end{eqnarray}
which gives the given expression for the long-run average cost per unit time.~\qed
\end{proof}

We now provide details on the formulation of the linear program. 
\linearization*

\begin{proof}
{\it Proof. }
Using $\pi_{\bm i} =\frac{r_{\bm i}}{\left ( \frac{1}{\Lambda}-(\tau-1) \sum_{\bm m \in \cal U} r_{\bm m} \right )} $,  we can rewrite the stationary distribution equations as
\begin{eqnarray}
    &r_{\bm m} = \delta(\bm m,\bm Q) \sum_{{\bm i} \in \cal U} r_{\bm i} + \sum_{{\bm i} \in \overline{\cal U}} {P}(\bm m|\bm i) r_{\bm i}~,  \quad \text{ and } \quad 
    \frac{1}{\Lambda} \sum_{{\bm i} \in \overline{\cal U}} r_{\bm i} +\tau \sum_{{\bm i} \in \cal U} r_{\bm i} =\frac{1}{\Lambda}~.
\end{eqnarray}
The closed-form expression for the average cost under the trigger set $\cal U$ given in Equation~(\ref{eq:objfn}) is then equal to $\Lambda\;\sum_{\bm i\in {\cal U}} G(\bm i) r_{\bm i}$, as shown below. 
\begin{eqnarray}
\frac{\sum_{{\bm i}\in {\cal U}} G({\bm i}) \pi_{\bm i}}{ \frac{1}{\Lambda}\sum_{{\bm i} \in {\overline{\cal U}}} \pi_{\bm i}+ \tau \sum_{{\bm i} \in {\cal U}} \pi_{\bm i}}
&=&\frac{\sum_{{\bm i}\in {\cal U}} G({\bm i})\frac{r_{\bm i}}{\left ( \frac{1}{\Lambda}-(\tau-1) \sum_{\bm m \in \cal U} r_{\bm m} \right )}}{ \frac{1}{\Lambda}\sum_{{\bm i} \in {\overline{\cal U}}} \frac{r_{\bm i}}{\left ( \frac{1}{\Lambda}-(\tau-1) \sum_{\bm m \in \cal U} r_{\bm m} \right )}+ \tau \sum_{{\bm i} \in {\cal U}} \frac{r_{\bm i}}{\left ( \frac{1}{\Lambda}-(\tau-1) \sum_{\bm m \in \cal U} r_{\bm m} \right )}}~, \nonumber \\
&=&\frac{\sum_{{\bm i}\in {\cal U}} G({\bm i})r_{\bm i}}{ \frac{1}{\Lambda}\sum_{{\bm i} \in {\overline{\cal U}}}{r_{\bm i}}+ \tau \sum_{{\bm i} \in {\cal U}} {r_{\bm i}}} = \Lambda\;\sum_{{\bm i}\in {\cal U}} G({\bm i})\;r_{\bm i}~, 
\end{eqnarray}
as stated in the lemma.~\qed
\end{proof}

Next, we work toward deriving a closed-form expression for the entrance fee $f'(\bm i)$ from Lemma~\ref{lemma:entrance_fee_expression}. To achieve this, we first establish a supporting lemma (Lemma~\ref{lemma: G}) that characterizes how the cost function $G(\cdot)$ changes with inventory reductions.

\begin{lemma}\label{lemma: G}
For any state $\bm i$ referring to a state vector of $(i_1, \dots, i_j,\dots, i_J)$ and where $i^{-j}$ refers to the state $(i_1, \dots, i_j-1,\dots, i_J)$, we have, for any items $l,m \in \{1,2,\dots,j, \dots, J\}$, 
\begin{eqnarray}
G(\bm i^{-l}) -   G((\bm i^{-l})^{-m}) = - b_m P(D_m (\tau) \geq i_m)  =  G(\bm i) -   G(\bm i^{-m})~.
\end{eqnarray}
\end{lemma}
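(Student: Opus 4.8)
The plan is to work directly from the definition
\[
G(\bm i) = A + \sum_{j=1}^J b_j \sum_{r=i_j}^\infty (r-i_j)\,P(D_j(\tau)=r),
\]
and to notice that $G(\bm i) - G(\bm i^{-m})$ isolates a single summand. Since $\bm i^{-m}$ differs from $\bm i$ only in coordinate $m$, every term with $j\neq m$ cancels, leaving
\[
G(\bm i) - G(\bm i^{-m}) = b_m\left[\sum_{r=i_m}^\infty (r-i_m)\,P(D_m(\tau)=r) - \sum_{r=i_m-1}^\infty (r-i_m+1)\,P(D_m(\tau)=r)\right].
\]

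Next I would re-index the second sum. The term at $r=i_m-1$ contributes $(i_m-1-i_m+1)\,P(D_m(\tau)=i_m-1)=0$, so the second sum equals $\sum_{r=i_m}^\infty (r-i_m+1)\,P(D_m(\tau)=r)$; this bookkeeping step simultaneously disposes of the boundary subtlety when $i_m\le 0$, because $P(D_m(\tau)=r)=0$ for $r<0$. Subtracting the two sums termwise and using $(r-i_m)-(r-i_m+1)=-1$ gives
\[
G(\bm i) - G(\bm i^{-m}) = -b_m\sum_{r\ge i_m} P(D_m(\tau)=r) = -b_m\,P(D_m(\tau)\ge i_m),
\]
which is the ``middle $=$ right'' part of the claim. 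For the ``left $=$ middle'' part, I would apply the identity just derived with $\bm i^{-l}$ in place of $\bm i$: since $l\neq m$, the $m$-th coordinate of $\bm i^{-l}$ is still $i_m$, so the same formula yields $G(\bm i^{-l}) - G((\bm i^{-l})^{-m}) = -b_m\,P(D_m(\tau)\ge i_m)$, completing the chain of equalities.

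The only point demanding care is the index shift together with the lower-limit bookkeeping when $i_m$ (or $i_m-1$) is non-positive; once one records that $P(D_m(\tau)=r)=0$ for $r<0$, the computation is purely mechanical. Accordingly I do not expect a genuine obstacle — the lemma is in essence a one-line telescoping argument, stated in a form convenient for the later entrance-fee derivation.
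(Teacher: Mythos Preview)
Your proof is correct and matches the paper's approach: direct subtraction from the definition of $G$, where all $j\neq m$ terms cancel and the remaining telescoping sum collapses to $-b_m P(D_m(\tau)\ge i_m)$. The only difference is presentational---the paper repeats the full computation for $G(\bm i^{-l}) - G((\bm i^{-l})^{-m})$ from scratch, whereas you more economically substitute $\bm i^{-l}$ into the identity already derived; both arguments (and indeed the lemma as stated) tacitly require $l\neq m$.
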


\begin{proof}
{\it Proof.}
We analyze two cases and show that they lead to the same expression.

\begin{enumerate}
    \item First, consider $G(\bm i) - G(\bm i^{-m})$. Using the definition of $G(\cdot)$:
    \begin{eqnarray}
    G(\bm i) &=& A + \sum_{j \in J} b_j \sum_{r=i_j}^{\infty} (r-i_j) P(D_j(\tau)= r) ~, \\
    G(\bm i^{-m}) &=& A + \sum_{j \in J: j \neq m} b_j \sum_{r=i_j}^{\infty} (r-i_j) P(D_j(\tau)= r) \\
    && \qquad \qquad \qquad + b_m \sum_{r=i_m-1}^{\infty} (r-i_m-1) P(D_m(\tau) = r).\nonumber
    \end{eqnarray}
    
    Subtracting we get, 
    \begin{eqnarray}
    G(\bm i) - G(\bm i^{-m}) &=& b_m \left[\sum_{r=i_m}^{\infty} (r-i_m) P(D_j(\tau)= r) -\sum_{r=i_m-1}^{\infty} (r-i_m-1) P(D_m(\tau) = r) \right]~,\nonumber\\
                    &=& -b_m P(D_m (\tau) \geq i_m).
    \end{eqnarray}

\item Next, consider $G(\bm i^{-j}) - G((\bm i^{-j})^{-m})$. Similarly,
    \begin{eqnarray}
    G(\bm i^{-l}) &=& A + \sum_{j \in J: j \neq l} b_j \sum_{r=i_j}^{\infty} (r-i_j) P(D_j(\tau)= r)\\
    && \qquad \qquad \qquad + b_l \sum_{r=i_{l}-1}^{\infty} (r-i_{l}-1) P(D_l(\tau)= r)~, \nonumber \\
    G((\bm i^{-l})^{-m}) &=& A + \sum_{j \in J: j \neq l, m} b_j \sum_{r=i_j}^{\infty} (r-i_j) P(D_j(\tau)= r)~, \\
    &&  \qquad + b_l \sum_{r=i_{l}-1}^{\infty} (r-i_{l}-1) P(D_l(\tau) = r) + b_m \sum_{r=i_m-1}^{\infty} (r-i_m-1) P(D_m(\tau) = r).\nonumber
    \end{eqnarray}
Subtracting we get,
    \begin{eqnarray}
    G(\bm i^{-l}) - G((\bm i^{-l})^{-m}) &=& b_m \left[\sum_{r=i_m}^{\infty} (r-i_m) P(D_j(\tau)= r) -\sum_{r=i_m-1}^{\infty} (r-i_m-1) P(D_m(\tau) = r) \right]~, \nonumber \\
    &=& -b_m P(D_m (\tau) \geq i_m)~.
    \end{eqnarray}
\end{enumerate}
Thus, we have shown that both expressions equal $-b_m P(D_m (\tau) \geq i_m)$, completing the proof.~\qed
\end{proof}

Using this property, we can derive a simpler expression for the entrance fee as follows. 
\entrancefeeexpression*
\begin{proof}
{\it Proof. }
From the transformation to an entrance fee problem we have
\begin{eqnarray}
f'(\bm i) &=& f(\bm i) - \left[\sum_j p_j F({\bm i^{-j}}) - F(\bm i)\right]~,\nonumber\\
&=& - \frac{\alpha^*}{\Lambda} - \left[\sum_j p_j (-G({\bm i^{-j}})) - (-G(\bm i))\right]~,\nonumber\\
&=& - \frac{\alpha^*}{\Lambda} + \left[G(\bm i) - \sum_j p_j G({\bm i^{-j}})\right]~.
\end{eqnarray}

By Lemma~\ref{lemma: G}, we know that
\begin{eqnarray}
G(\bm i) - \sum_j p_j G({\bm i^{-j}}) = \sum_j p_j b_j P(D_j(\tau) \geq i_j)~.
\end{eqnarray}
Substituting this result,
\begin{eqnarray}
f'(\bm i) &=& - \alpha^* \; \frac{1}{\Lambda} + \sum_j p_j b_j P(D_j(\tau) \geq i_j)~.
\end{eqnarray}
This completes our derivation of the simpler expression for the entrance fee.~\qed
\end{proof}

Next, we work toward proving Theorem~\ref{thm:threshold}, which establishes that an optimal replenishment policy follows a monotonicity structure based on state indices. To demonstrate this, we first establish a crucial monotonicity property of the entrance fee function $f'(\cdot)$ in the following lemma.

\begin{restatable}{lemma}{lemmafprime}\label{lemma: f'}
For any state $\bm i$ and any item $j$, $f'(\bm i^{-j}) \geq f'(\bm i)$.
\end{restatable}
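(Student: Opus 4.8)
The plan is to prove monotonicity of the entrance fee $f'(\cdot)$ directly from the closed-form expression established in Lemma~\ref{lemma:entrance_fee_expression}, namely
\[
f'(\bm i) = -\frac{\alpha^*}{\Lambda} + \sum_k p_k\, b_k\, P\bigl(D_k(\tau) \geq i_k\bigr).
\]
Since $-\alpha^*/\Lambda$ is a constant independent of the state, proving $f'(\bm i^{-j}) \geq f'(\bm i)$ reduces to showing
\[
\sum_k p_k\, b_k\, P\bigl(D_k(\tau) \geq i_k^{-j}\bigr) \;\geq\; \sum_k p_k\, b_k\, P\bigl(D_k(\tau) \geq i_k\bigr).
\]

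The key observation is that $\bm i^{-j}$ differs from $\bm i$ only in coordinate $j$, where the inventory level drops by one: $(i^{-j})_j = i_j - 1$ and $(i^{-j})_k = i_k$ for all $k \neq j$. Therefore all terms with $k \neq j$ cancel, and the inequality collapses to the single-coordinate statement
\[
p_j\, b_j\, P\bigl(D_j(\tau) \geq i_j - 1\bigr) \;\geq\; p_j\, b_j\, P\bigl(D_j(\tau) \geq i_j\bigr).
\]
Since $p_j = \lambda_j/\Lambda \geq 0$ and $b_j \geq 0$, it suffices to note that the survival function $r \mapsto P(D_j(\tau) \geq r)$ is non-increasing in $r$, so lowering the threshold from $i_j$ to $i_j - 1$ can only increase (weakly) the probability. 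This gives the desired inequality, and hence $f'(\bm i^{-j}) \geq f'(\bm i)$.

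I expect no substantive obstacle here; the result is essentially immediate once Lemma~\ref{lemma:entrance_fee_expression} is in hand, because the entrance fee is a coordinatewise-separable sum of non-increasing survival functions plus a constant. The only minor care needed is the edge case where $i_j$ is already non-positive: when $i_j \leq 0$ we have $P(D_j(\tau) \geq i_j) = 1 = P(D_j(\tau) \geq i_j - 1)$ (recalling $D_j(\tau) \geq 0$ almost surely), so the inequality holds with equality, consistent with the stated weak inequality. I would phrase the argument to cover all integer values of $i_j$ uniformly by simply invoking monotonicity of the survival function over the integers. This lemma then feeds directly into the proof of Theorem~\ref{thm:threshold}: once $f'(\cdot)$ is monotone along every unit decrement of inventory, indexing states by increasing $f'$ value yields a well-defined threshold structure for the optimal stopping rule.
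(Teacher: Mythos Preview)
Your proposal is correct and takes essentially the same approach as the paper: both invoke the closed-form entrance fee from Lemma~\ref{lemma:entrance_fee_expression} and conclude via monotonicity of the survival function $r \mapsto P(D_j(\tau)\geq r)$. The paper argues slightly more coarsely that $i_k^{-j}\leq i_k$ for all $k$ (with equality for $k\neq j$), whereas you isolate the single coordinate $j$ explicitly, but this is a cosmetic difference.
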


\begin{proof}
{\it Proof.}
We expand $f'(\bm i^{-j})$ and use the result from Lemma~\ref{lemma: G}, as follows.
\begin{eqnarray}
   f'(\bm i^{-j}) 
   &=&  (-g(\bm i^{-j}) - \alpha^*) - \left [G(\bm i^{-j}) - \sum_k p_k G((\bm i^{-j})^{-k}) \right ]~,\nonumber\\
   &= &  - \alpha^* - \sum_k p_k [G(\bm i^{-j}) - G((\bm i^{-j})^{-k})]~,  \nonumber\\
   &=& - \alpha^* + \sum_k p_k b_k\; P(D_k\geq i_k^{-j}), \nonumber \\
   & \geq &-  \alpha^*+ \sum_k p_k b_k\; P(D_k\geq i_k)  = f'(\bm i). 
\end{eqnarray}
The inequality follows from the fact that $P(D_k\geq i_k^{-j}) \geq P(D_k\geq i_k)$ since $i_k^{-j} \leq i_k$ for all $k$.~\qed
\end{proof}

These lemmas provide the foundation for proving Theorem~\ref{thm:threshold}, which establishes the existence of a critical state index that determines the optimal replenishment policy. We now proceed with the proof of this main result.

\threshold*
\begin{proof}
{\it Proof. }
Proof by demonstrating that the entrance fee problem satisfies the conditions of Theorem 10.3 in \citep{breiman1964stopping}. The conditions required for the theorem to hold are
\begin{enumerate}
\item [i.] The entrance fee $f'(\kappa^{-1}(k))$ is non-decreasing in state index, $k$.
\item [ii.] For any non-decreasing function $h(k)$, $\sum_l h(l)P(\kappa^{-1}(l)|\kappa^{-1}(k))$ is also non-decreasing in state index~$k$.
\end{enumerate}

Condition (i) is satisfied by our problem formulation, as we index the states such that $f'(\kappa^{-1}(k))$ is non-decreasing in state index~$k$. It is worth noting that while $f'(\cdot)$ includes the term $\alpha^*$ (the minimum long-run expected average cost per unit time under the optimal policy), which is unknown a priori, we can replace it with an arbitrary constant without affecting the ordering of the states.


We know that the index of  state $\kappa({\bm i^{-j}})$ is at least equal to $\kappa(\bm i)+1$ (i.e., one more than the index of the state $\bm i$) since $f'({\bm i^{-j}}) \geq f'(\bm i)$ by Lemma~\ref{lemma: f'}. We can now show that  $\sum_l h(l) P(\kappa^{-1}(l)|\kappa^{-1}(k))$ is nondecreasing in index $k$. 

Let $\bm i$ denote the state vector that corresponds to the index $k$, i.e., $\kappa^{-1}(k)=\bm i$. Recall that function $h$ is non-decreasing in state index.  
\begin{eqnarray}
   \sum_l h(l) P(\kappa^{-1}(l)|\kappa^{-1}(k))
   &=&  \sum_j p_j h(\kappa([\kappa^{-1}(k)]^{-j})) \quad \text{(due to the structure of the transitions)}~, \nonumber\\
   &\leq& \sum_j p_j h(\kappa([\kappa^{-1}(k+1)]^{-j}))~, \label {eq:buyuk} \nonumber\\
   &=& \sum_l h(l) P(\kappa^{-1}(l)|\kappa^{-1}(k+1))~.
\end{eqnarray}
The inequality in~(\ref{eq:buyuk}) is due to the fact that (i) $f'(\bm i) = f'(\kappa^{-1}(k))\leq f'(\kappa^{-1}(k+1) \leq f'(\bm i^{-j})$ and (ii) the function $h$ is nondecreasing in index $k$. Proving Conditions (i) and (ii) implies that there exists a monotonic optimal solution with a stopping set that includes all states with indices $k \geq k^*$.~\qed
\end{proof}


Next, we will prove Corollary \ref{cor:optimal_continue_set}, which characterizes the optimal continue and stopping sets more precisely in terms of the entrance fee function.

\optimalContSet*

\begin{proof}{\it Proof. }
From Theorem \ref{thm:threshold}, we know that there exists a state index $k^*$ such that it is optimal to continue in all states with indices less than $k^*$ and to trigger replenishment in all states with indices greater than or equal to $k^*$.

Since states are indexed such that $f'(\cdot)$ is non-decreasing in the state index, this critical state index $k^*$ corresponds to a specific value of $f'(\cdot)$. We can show that this critical value corresponds precisely to $f'(\bm i) = 0$.

Assume by contradiction that the critical boundary occurs at some state $\bm i$ with $f'(\bm i) < 0$, but it's optimal to trigger replenishment. In this case, according to the entrance fee interpretation, the expected improvement in stopping reward ($\sum_j p_j F({\bm i^{-j}}) - F(\bm i)$) exceeds the cost of continuing one step ($f(\bm i)$), making it more beneficial to continue than to stop, which contradicts the optimality of triggering replenishment.

Similarly, if the critical boundary occurs at some state $\bm i$ with $f'(\bm i) > 0$, but it is optimal to continue, the cost of continuing exceeds the expected improvement in stopping reward. Furthermore, by the monotonicity property established in Lemma \ref{lemma: f'}, once $f'(\bm i) > 0$ at some state, it remains positive for all subsequent states reachable through demand arrivals. This makes it more beneficial to stop, which contradicts the optimality of continuing.

Therefore, the critical boundary must occur precisely at states where $f'(\bm i) = 0$, with all states satisfying $f'(\bm i) \leq 0$ belonging to the optimal continue set $\overline{\mathcal{U}}^*$ and all states satisfying $f'(\bm i) > 0$ belonging to the optimal stopping set $\mathcal{U}^*$.~\qed
\end{proof}


From Corollary~\ref{cor:optimal_continue_set}, we know that the optimal continue set consists exactly of those states where $f'(\bm i) \leq 0$. 
Having established this characterization of the optimal policy, we can now show how it enables an efficient online implementation through the following lemma.
\onlinepolicy*

\begin{proof} 
{\it Proof. }
We show that $f'({\bm i}) \leq 0 \iff \hat{g}({\bm i}) \leq \alpha^*$ through the following chain of inequalities. 

\begin{eqnarray}
f'({\bm i}) \leq 0 &\iff&
 - \alpha^* \frac{1}{\Lambda} + \sum_j p_j b_j P(D_j(\tau) \geq i_j) \leq 0~, \nonumber\\ 
&\iff& \sum_j p_j b_j P(D_j(\tau) \geq i_j) \leq \alpha^* \frac{1}{\Lambda}~, \nonumber\\ 
&\iff& \sum_j \lambda_j b_j P(D_j(\tau) \geq i_j) \leq \alpha^*~, \nonumber\\ 
&\iff& \hat{g}(\bm i) \leq \alpha^*~.
\end{eqnarray}
This completes the proof of the equivalence between the optimal policy characterization and its online implementation.~\qed
\end{proof}

Next, we will prove Lemma \ref{lemma:incremental_calc}.
\incrementalcalc*
\begin{proof}
{\it Proof. }
 We can calculate $\rho_{\bm i}$ by solving 
\begin{eqnarray}
    \rho_{\bm m} &=& \sum_{\bm i} {P}_{\cal U}(\bm m|{\bm i}) \rho_{\bm i}, \quad \forall \bm m\in \cal S,  \quad \text{ and } \quad  
    \rho_{\bm{i^0}} =1 ~,
\end{eqnarray}
where the one-step transition probability from state $\bm{i}$ to $\bm{m}$ under the stopping set $\cal U$ is 
\begin{eqnarray}
    {P}_{\cal U}({\bm m}|{\bm i})=\begin{cases}
        {P}({\bm m}|{\bm i})~, \quad &\text{ if } {\bm i} \in \cal{\overline{U}}~, \\
        \delta({\bm m},\bm Q)~, \quad &\text{ if } {\bm i} \in \cal U~.   \end{cases}
\end{eqnarray}

By Theorem~\ref{thm:threshold}, when states are added in ascending order of $\hat{g}(\cdot)$, all states that could lead to state $\bm i$ (those with higher inventory levels) must have lower $\hat{g}(\cdot)$ values and therefore are already in the continue set. Therefore, we know that (i) all possible paths to state $\bm i$ are in the continue set, (ii) no paths through states with higher $\hat{g}(\cdot)$ values can lead to $\bm i$, and (iii) the visiting probability $\rho_{\bm i}$ depends only on the probability of the specific sequence of demands needed to reach $\bm i$ from ${\bm {i^0}}$, which is given by the multinomial distribution.

Furthermore, adding state $\bm i$ cannot affect the visiting probabilities of previously added states, since they cannot be reached through state $\bm i$.~\qed
\end{proof}

Building on these visiting probabilities, we can decompose the expected cycle length and cost into components that depend only on the continue states as shown in Theorem \ref{thm:cycle_decomp}, proven next. 
\CycleDecompTheoremName*
\begin{proof}
{\it Proof. } Following Theorem 10.6 in  \cite{breiman1964stopping} we can define our problem as an incentive-fee problem with a constant stopping cost of $G({\bm {i^0}})$ and incentive fee of ${g(\bm i) -\left[G(\bm i) - \sum_j p_j G({\bm i^{-j}})\right]}$.
Each cycle begins with the base cost $G({\bm {i^0}})$. When visiting a continue state $\bm i$ (with probability $\rho_{\bm i}$), the system incurs an incentive fee.
Following Lemma \ref{lemma: G} defined in the Electronic Companion \ref{sec:Proofs for Trigger Set Control Policy} we can see that the incentive fee is equal to $\hat{g}(\bm i)/\Lambda$.
Thus, each continue state contributes an expected additional cost of $(\hat{g}(\bm i)/\Lambda) \rho_{\bm i}$ to the cycle cost. Summing over all continue states produces the expected cost per cycle.

Furthermore, consider a cycle starting from the state ${\bm {i^0}}$ until hitting a stopping state. Each cycle has a lead time of $\tau$ after the initiation of a replenishment.
When the system visits a continue state $\bm i$ (which occurs with probability $\rho_{\bm i}$), it spends an additional time $1/\Lambda$ until the next demand arrives. Therefore, each continue state contributes an additional expected time of $\rho_{\bm i}/\Lambda$ to the cycle length. Summing over all continue states yields the expected cycle length.~\qed
\end{proof}

We now establish two supporting lemmas that will be used to analyze the relationship between policy parameters and their corresponding policy costs in Proposition~\ref{thm:policy_bounds}.


\begin{lemma}\label{lemma:ghat_inequality}
If $\hat{g}(\kappa^{-1}(k)) > c(\mu_{\hat{g}(\kappa^{-1}(k-1))})$, then $\hat{g}(\kappa^{-1}(k)) > c(\mu_{\hat{g}(\kappa^{-1}(k))})$~.
\end{lemma}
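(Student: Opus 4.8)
The plan is to reduce the claim to a one-line mediant inequality using the incremental cost decomposition of Theorem~\ref{thm:cycle_decomp}. First I would fix the ordering of states by $\hat{g}(\cdot)$ and write $\overline{\cal U}_{k-1}=\{\kappa^{-1}(1),\dots,\kappa^{-1}(k-1)\}$ and $\overline{\cal U}_{k}=\overline{\cal U}_{k-1}\cup\{\kappa^{-1}(k)\}$ for the continue sets of the policies $\mu_{\hat{g}(\kappa^{-1}(k-1))}$ and $\mu_{\hat{g}(\kappa^{-1}(k))}$, respectively. By Theorem~\ref{thm:cycle_decomp}, their long-run average costs are the ratios $TC_{k-1}/T_{k-1}$ and $TC_{k}/T_{k}$, where $TC_{k-1}=G(\bm{i^0})+\sum_{\bm i\in\overline{\cal U}_{k-1}}\tfrac{\hat{g}(\bm i)}{\Lambda}\rho_{\bm i}$ and $T_{k-1}=\tau+\sum_{\bm i\in\overline{\cal U}_{k-1}}\tfrac{1}{\Lambda}\rho_{\bm i}$, and similarly for $TC_k,T_k$. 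The essential point, which I would cite from Lemma~\ref{lemma:incremental_calc}, is that the visiting probabilities $\rho_{\bm i}$ of the states already in $\overline{\cal U}_{k-1}$ do not change when $\kappa^{-1}(k)$ is appended (no later state in the $\hat{g}$-ordering can be a predecessor of an earlier one, by Theorem~\ref{thm:threshold}), so the numerator and denominator update purely additively: $TC_{k}=TC_{k-1}+\tfrac{g_k}{\Lambda}\rho_k$ and $T_{k}=T_{k-1}+\tfrac{1}{\Lambda}\rho_k$, with $g_k:=\hat{g}(\kappa^{-1}(k))$ and $\rho_k:=\rho_{\kappa^{-1}(k)}\ge 0$.

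Next I would rewrite the hypothesis $g_k>c(\mu_{\hat{g}(\kappa^{-1}(k-1))})=TC_{k-1}/T_{k-1}$ in cleared form as $g_kT_{k-1}-TC_{k-1}>0$ (the denominators $T_{k-1},T_k$ being strictly positive since they are at least $\tau$). Then a direct computation gives
\[
g_k T_k - TC_k \;=\; g_k\Bigl(T_{k-1}+\tfrac{\rho_k}{\Lambda}\Bigr)-\Bigl(TC_{k-1}+\tfrac{g_k\rho_k}{\Lambda}\Bigr)\;=\;g_kT_{k-1}-TC_{k-1}\;>\;0,
\]
because the increment $\tfrac{g_k\rho_k}{\Lambda}$ added to the numerator and $\tfrac{\rho_k}{\Lambda}$ added to the denominator are in the exact ratio $g_k$ and therefore cancel. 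Dividing by $T_k>0$ yields $g_k>TC_k/T_k=c(\mu_{\hat{g}(\kappa^{-1}(k))})$, which is precisely the conclusion. Equivalently, $TC_k/T_k$ is a weighted mediant of $TC_{k-1}/T_{k-1}$ and $g_k$, hence it lies (weakly) between them, and since the smaller of the two is $TC_{k-1}/T_{k-1}<g_k$ we get $TC_k/T_k<g_k$.

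There is essentially no obstacle beyond bookkeeping; the only step that needs a word of care is the invariance of the $\rho_{\bm i}$ across the two nested continue sets, which is exactly what Lemma~\ref{lemma:incremental_calc} supplies, so I would state that explicitly rather than re-deriving it. I would also dispatch the degenerate case $\rho_k=0$ in passing: there $TC_k/T_k=TC_{k-1}/T_{k-1}<g_k$ trivially, so the conclusion still holds, and the cancellation identity above in fact covers this case uniformly since it never divides by $\rho_k$.
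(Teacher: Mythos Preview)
Your proof is correct and follows essentially the same route as the paper's: both arguments write the policy cost as a ratio, update numerator and denominator additively when the $k$th state is appended, and then clear denominators to see that $g_kT_k-TC_k=g_kT_{k-1}-TC_{k-1}>0$. Your version is slightly more careful in two respects---you explicitly invoke Lemma~\ref{lemma:incremental_calc} to justify that the earlier $\rho_{\bm i}$ are unchanged, and you note the degenerate case $\rho_k=0$---but the core idea is identical.
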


\begin{proof}
{\it Proof. }
Let $\bm i = \kappa^{-1}(k)$~. By definition of $\mu_{\hat{g}(\kappa^{-1}(k))}$, this policy includes state $\bm i$ in its continue set, while $\mu_{\hat{g}(\kappa^{-1}(k-1))}$ does not.  Let $TC_{k-1}$ and $T_{k-1}$ be the expected cycle cost and cycle time resulting from policy $\mu_{\hat{g}(\kappa^{-1}(k-1))}$~. Then,
\begin{eqnarray}
TC_k = TC_{k-1} + \hat{g}(\bm i)\rho_{\bm i} \quad \text{ and } \quad T_k = T_{k-1} + \rho_{\bm i}~,
\end{eqnarray}

Given $\hat{g}(\bm i) > c(\mu_{\hat{g}(\kappa^{-1}(k-1))}) = \frac{TC_{k-1}}{T_{k-1}}$,
\begin{eqnarray}
\hat{g}(\bm i) > \frac{TC_{k-1}}{T_{k-1}} &\Rightarrow& \hat{g}(\bm i)\;T_{k-1} > TC_{k-1}~, \nonumber\\
&\Rightarrow& \hat{g}(\bm i)(T_{k-1} + \rho_{\bm i}) > TC_{k-1} + \hat{g}(\bm i)\rho_{\bm i}~, \nonumber\\
&\Rightarrow& \hat{g}(\bm i) > \frac{TC_{k-1} + \hat{g}(\bm i)\rho_{\bm i}}{T_{k-1} + \rho_{\bm i}} = c(\mu_{\hat{g}(\kappa^{-1}(k))})~.
\end{eqnarray}

This completes the proof of the inequality relationship.~\qed
\end{proof}

\begin{restatable}{lemma}{ghatbound}\label{lemma:ghat_bound}
For any state $\bm i$, if $\hat{g}(\bm i) > \alpha^*$, then $\hat{g}(\bm i) > c(\mu_{\hat{g}(\bm i)})$.
\end{restatable}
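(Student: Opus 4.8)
The plan is to prove Lemma~\ref{lemma:ghat_bound} by a monotonicity/induction argument along the ordered states, leveraging Lemma~\ref{lemma:ghat_inequality} and the fact (from Algorithm~\ref{alg:continue_set} and Theorem~\ref{thm:cycle_decomp}) that $\alpha^*$ is the minimum of $c(\mu_{\hat g(\kappa^{-1}(k))})$ over all $k$, attained at some critical index $k^*$. Index the states so that $\hat g(\cdot)$ is non-decreasing, and write $\bm i = \kappa^{-1}(k)$. The hypothesis $\hat g(\bm i) > \alpha^*$ forces $k > k^*$, because for $k \le k^*$ the monotone construction is still decreasing the average cost, so $\hat g(\kappa^{-1}(k)) \le c(\mu_{\hat g(\kappa^{-1}(k-1))})$ and hence $\hat g(\kappa^{-1}(k)) \le \alpha^*$ for those indices. (This last inequality is exactly the stopping criterion of Algorithm~\ref{alg:continue_set}: states are added to the continue set precisely while $c(\mu_{\hat g(\bm i)}) < \hat g(\bm i)$, equivalently while $\hat g(\bm i) \le \alpha^*$ by Corollary~\ref{cor:optimal_continue_set} and Theorem~\ref{thm:online_policy}.)

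Next I would run an induction on $k$ from $k^*+1$ upward to show that $\hat g(\kappa^{-1}(k)) > c(\mu_{\hat g(\kappa^{-1}(k))})$ whenever $\hat g(\kappa^{-1}(k)) > \alpha^*$. For the base case $k = k^*+1$: by definition of $k^*$ as the minimizer, $c(\mu_{\hat g(\kappa^{-1}(k^*))}) = \alpha^* < \hat g(\kappa^{-1}(k^*+1))$, so the hypothesis of Lemma~\ref{lemma:ghat_inequality} (applied with this $k$) is satisfied, giving $\hat g(\kappa^{-1}(k^*+1)) > c(\mu_{\hat g(\kappa^{-1}(k^*+1))})$. For the inductive step, suppose $\hat g(\kappa^{-1}(k-1)) > c(\mu_{\hat g(\kappa^{-1}(k-1))})$ already holds for some $k-1 \ge k^*+1$. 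Since $\hat g$ is non-decreasing, $\hat g(\kappa^{-1}(k)) \ge \hat g(\kappa^{-1}(k-1)) > c(\mu_{\hat g(\kappa^{-1}(k-1))})$, so again the premise of Lemma~\ref{lemma:ghat_inequality} holds at index $k$, and we conclude $\hat g(\kappa^{-1}(k)) > c(\mu_{\hat g(\kappa^{-1}(k))})$. This closes the induction, and since any state $\bm i$ with $\hat g(\bm i) > \alpha^*$ has index $k > k^*$, the claim follows for all such states.

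The main obstacle is making the "$\hat g(\bm i) > \alpha^* \implies$ index $> k^*$" step airtight, i.e.\ pinning down that $\alpha^* = \min_k c(\mu_{\hat g(\kappa^{-1}(k))})$ is attained at the same index $k^*$ beyond which $\hat g$ exceeds $\alpha^*$. This is really the content of the monotonicity established in Theorem~\ref{thm:threshold} together with Corollary~\ref{cor:optimal_continue_set}: the optimal continue set is exactly $\{\bm i : \hat g(\bm i) \le \alpha^*\}$, so $k^*$ (the last continue index) satisfies $\hat g(\kappa^{-1}(k^*)) \le \alpha^* < \hat g(\kappa^{-1}(k^*+1))$, and ties in $\hat g$ values are handled because states with $\hat g(\bm i) = \alpha^*$ are placed in the continue set (they do not strictly increase the running average). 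Once this bookkeeping is set up, the rest is a direct chaining of Lemma~\ref{lemma:ghat_inequality}. An alternative, perhaps cleaner, route avoids induction entirely: given $\hat g(\bm i) > \alpha^*$, let $k = \kappa(\bm i)$ and note $c(\mu_{\hat g(\kappa^{-1}(k-1))}) \le \alpha^*$ would fail only if $k-1 \ge$ some index past $k^*$, but by the minimality of $\alpha^*$ we always have $c(\mu_{\hat g(\kappa^{-1}(k-1))}) \ge \alpha^*$; combined with $\hat g(\bm i) > \alpha^*$ we do \emph{not} immediately get the premise of Lemma~\ref{lemma:ghat_inequality}, which is why the induction (tracking that the running average stays $\le \hat g$ of the next state) is the safer path. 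I would therefore present the induction version.
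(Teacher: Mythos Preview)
Your proposal is correct and follows essentially the same approach as the paper: induct along the state ordering starting at $k^*+1$, using $c(\mu_{\hat g(\kappa^{-1}(k^*))}) = \alpha^*$ for the base case and repeatedly invoking Lemma~\ref{lemma:ghat_inequality} together with the monotonicity of $\hat g$ for the inductive step. Your write-up is in fact a bit more careful than the paper's about justifying why $\hat g(\bm i) > \alpha^*$ forces the index of $\bm i$ to exceed $k^*$, but the core argument is identical.
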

\begin{proof}
{\it Proof. }
For $k^* + 1$, we know $\hat{g}(\kappa^{-1}(k^* + 1)) > \alpha^* = c(\mu_{\hat{g}(\kappa^{-1}(k^*))})$. By Lemma~\ref{lemma:ghat_inequality}, this implies ${\hat{g}(\kappa^{-1}(k^* + 1)) > c(\mu_{\hat{g}(\kappa^{-1}(k^* + 1))})}$.
For any $k > k^* + 1$, since states are ordered by increasing $\hat{g}(\cdot)$,
\begin{eqnarray}
\hat{g}(\kappa^{-1}(k+2)) > \hat{g}(\kappa^{-1}(k^* + 1)) > c(\mu_{\hat{g}(\kappa^{-1}(k^* + 1))})
\end{eqnarray}
and therefore,
\begin{eqnarray}
\hat{g}(\kappa^{-1}(k+2))  > c(\mu_{\hat{g}(\kappa^{-1}(k^* + 2))})~.
\end{eqnarray}

Applying Lemma~\ref{lemma:ghat_inequality} repeatedly for each $k > k^* + 1$ establishes that $\hat{g}(\kappa^{-1}(k)) > c(\mu_{\hat{g}(\kappa^{-1}(k))})$ for all such states.~\qed
\end{proof}


Using these lemmas, we can now prove Proposition~\ref{thm:policy_bounds}, which characterizes the relationship between approximations and policy costs.

\policybounds*

\begin{proof}{\it Proof. }
The first property and Equation~\eqref{eq:lower_bound} follow directly from the definition of optimality: Since $\mu_{\alpha^*}$ is the optimal policy, it achieves the minimum possible long-term average cost. Any policy that deviates from the optimal continue set by excluding optimal continue states (when $\alpha' \leq \alpha^*$) or including suboptimal continue states (when $\alpha' \geq \alpha^*$) must achieve an equal or higher cost.
Equation~\eqref{eq:upper_bound} follows from Lemma~\ref{lemma:ghat_bound}, which establishes that when $\alpha' \geq \alpha^*$, the cost of the corresponding policy $c(\mu_{\alpha'})$ is bounded above by $\alpha'$ and below by $\alpha^*$.~\qed
\end{proof}

\section{Proofs for Optimal Fixed Cycle Replenishment}
\label{sec: Proofs for Fixed Cycle Replenishment}

This section provides detailed proofs for theorems related to fixed cycle replenishment. We first establish several technical results about the expected shortage function and its derivatives. We then prove the asymptotic behavior of the cost function and the existence of an optimal cycle length.

\subsection{Supporting Results}

We begin with a characterization of the expected shortage function and its properties.

\begin{lemma}\label{lemma:expected_shortage}
The expected shortage of product $j$ during cycle length $T$ is given by
\begin{eqnarray}
\mathbb{E}[S_j(T, Q_j)] &=& \lambda_j\;T\;P(D_j(T) \geq Q_j) - Q_j\;P(D_j(T) \geq Q_j+1)~.
\end{eqnarray}
\end{lemma}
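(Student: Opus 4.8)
The plan is to first identify the shortage random variable explicitly and then evaluate its expectation via a Poisson mean-shift identity. Since item $j$ begins each cycle with $Q_j$ units on hand, receives no replenishment until the end of the cycle, and demands are met on hand until the stock is exhausted (all further demand being lost), the number of shortages over a cycle of length $T$ depends only on the total cycle demand: $S_j(T,Q_j) = (D_j(T)-Q_j)^+$, where $D_j(T)$ is Poisson with mean $\lambda_j T$. Taking expectations, $\mathbb{E}[S_j(T,Q_j)] = \sum_{r=Q_j+1}^{\infty}(r-Q_j)\,P(D_j(T)=r)$; we may start the sum at $r=Q_j+1$ because the $r=Q_j$ term vanishes.

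Next I would split this as $\sum_{r\geq Q_j+1} r\,P(D_j(T)=r) - Q_j\sum_{r\geq Q_j+1}P(D_j(T)=r)$, so that the second piece is immediately $-Q_j\,P(D_j(T)\geq Q_j+1)$. For the first piece I would use the identity $r\,P(D_j(T)=r) = \lambda_j T\,P(D_j(T)=r-1)$, valid for $r\geq 1$, which is just $r\,e^{-\lambda_j T}(\lambda_j T)^r/r! = \lambda_j T\,e^{-\lambda_j T}(\lambda_j T)^{r-1}/(r-1)!$. Applying it and reindexing with $s=r-1$ gives $\sum_{r\geq Q_j+1} r\,P(D_j(T)=r) = \lambda_j T\sum_{s\geq Q_j}P(D_j(T)=s) = \lambda_j T\,P(D_j(T)\geq Q_j)$. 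Combining the two pieces yields precisely $\lambda_j T\,P(D_j(T)\geq Q_j) - Q_j\,P(D_j(T)\geq Q_j+1)$, as claimed.

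No step is a genuine obstacle; the only items needing a word of care are (i) the claim $S_j(T,Q_j)=(D_j(T)-Q_j)^+$, which relies on the lost-sales assumption together with a single replenishment per cycle, so that nothing about the sample-path ordering beyond the total cycle demand matters, and (ii) bookkeeping of the summation bounds through the mean-shift reindexing. If one prefers to avoid the Poisson-specific identity, an alternative is to write $(D_j(T)-Q_j)^+ = \sum_{m\geq Q_j+1}\mathbf{1}\{D_j(T)\geq m\}$ and sum tail probabilities, but the mean-shift computation above is the most direct.
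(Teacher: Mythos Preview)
Your proof is correct and follows essentially the same approach as the paper: split $\mathbb{E}[(D_j(T)-Q_j)^+]$ into two sums and apply the Poisson identity $r\,P(D_j(T)=r)=\lambda_jT\,P(D_j(T)=r-1)$. Your choice to start the sum at $r=Q_j+1$ rather than $r=Q_j$ is a minor bookkeeping difference that actually makes the final step slightly cleaner than the paper's version.
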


\begin{proof}
{\it Proof. }
 To calculate $\mathbb{E}[S_j(T, Q_j)]$, we use the properties of the Poisson distribution. $$r\;P(D_j(T) = r) = \mathbb{E}[D_j(T)] P(D_j(T) = r-1)= \lambda_j\;T\;P(D_j(T) = r-1)~.$$
\begin{eqnarray}
\mathbb{E}[S_j(T, Q_j)] &=& \sum_{r=Q_j}^{\infty} (r-Q_j)\;P(D_j(T) = r)~,\nonumber \\
 &=& \sum_{r=Q_j}^{\infty} r\;P(D_j(T) = r) -\sum_{r=Q_j}^{\infty} Q_j\;P(D_j(T) = r)~, \nonumber\\
 &=& \sum_{r=Q_j}^{\infty}\lambda_j\;T\;P(D_j(T) = r-1) -\sum_{r=Q_j}^{\infty} Q_j\;P(D_j(T) = r) ~, \nonumber\\
 &=& \lambda_j\;T\;P(D_j(T) \geq Q_j-1) - Q_j\;P(D_j(T) \geq Q_j)~, \nonumber \\
 &=& \lambda_j\;T\;P(D_j(T) \geq Q_j) - Q_j\;P(D_j(T) \geq Q_j+1)~.
\end{eqnarray}
This completes the derivation of the expected shortage expression.~\qed
\end{proof}

The following lemmas establish the derivatives of the expected shortage function, which are crucial for characterizing the optimal cycle length.

\begin{lemma}[First Derivative of Shortage]\label{lem:firstDerivShortage}
The first derivative of the expected shortage function $\mathbb{E}[S_j(T, Q_j)]$ is defined by
\begin{eqnarray}
\frac{d \mathbb{E}[S_j(T, Q_j)]}{d T} &=&\lambda_j\;P(D_j(T) \geq Q_j)~.
\end{eqnarray}
\end{lemma}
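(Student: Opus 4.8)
The plan is to start from the closed form in Lemma~\ref{lemma:expected_shortage}, namely $\mathbb{E}[S_j(T,Q_j)] = \lambda_j T\, P(D_j(T)\ge Q_j) - Q_j\, P(D_j(T)\ge Q_j+1)$, and differentiate it in $T$ term by term. The one ingredient I need is the derivative of a Poisson tail probability: writing $P(D_j(T)\ge k) = 1 - \sum_{r=0}^{k-1} e^{-\lambda_j T}(\lambda_j T)^r/r!$ and differentiating, the sum telescopes (each summand contributes $-\lambda_j e^{-\lambda_j T}(\lambda_j T)^r/r! + \lambda_j e^{-\lambda_j T}(\lambda_j T)^{r-1}/(r-1)!$), leaving
\begin{equation}
\frac{d}{dT} P(D_j(T)\ge k) = \lambda_j\, P(D_j(T) = k-1).
\end{equation}
(Equivalently, this is the standard Gamma--Poisson relation $P(D_j(T)\ge k) = P(\text{Erlang}(k,\lambda_j)\le T)$, whose density is $\lambda_j e^{-\lambda_j T}(\lambda_j T)^{k-1}/(k-1)!$.)

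Applying the product rule to the first term gives $\lambda_j P(D_j(T)\ge Q_j) + \lambda_j T\cdot\lambda_j P(D_j(T)=Q_j-1)$, and differentiating the second term gives $Q_j\lambda_j P(D_j(T)=Q_j)$. Hence
\begin{equation}
\frac{d\,\mathbb{E}[S_j(T,Q_j)]}{dT} = \lambda_j P(D_j(T)\ge Q_j) + \lambda_j\bigl[\lambda_j T\, P(D_j(T)=Q_j-1) - Q_j\, P(D_j(T)=Q_j)\bigr].
\end{equation}
The bracketed term vanishes by the Poisson recursion already used in the proof of Lemma~\ref{lemma:expected_shortage}, namely $Q_j P(D_j(T)=Q_j) = \lambda_j T\, P(D_j(T)=Q_j-1)$, which yields the claimed identity $\frac{d}{dT}\mathbb{E}[S_j(T,Q_j)] = \lambda_j P(D_j(T)\ge Q_j)$.

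An alternative, slightly cleaner route I would mention is to use the tail-sum identity $\mathbb{E}[S_j(T,Q_j)] = \sum_{m>Q_j} P(D_j(T)\ge m)$ for the nonnegative integer variable $D_j(T)$; differentiating term by term and using the tail-derivative formula above, the sum $\sum_{m>Q_j}\lambda_j P(D_j(T)=m-1) = \lambda_j\sum_{m\ge Q_j}P(D_j(T)=m)$ telescopes directly to $\lambda_j P(D_j(T)\ge Q_j)$. The only non-mechanical point in either version is the justification for exchanging $\tfrac{d}{dT}$ with the infinite sum (or, in the first version, the smoothness of the Poisson tail in $T$); this follows from local uniform convergence of the series of derivatives, since $(\lambda_j T)^r/r!$ and its $T$-derivatives are dominated on compact $T$-intervals by summable bounds. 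I expect this convergence bookkeeping to be the only real ``obstacle,'' and a routine one at that.
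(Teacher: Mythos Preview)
Your proof is correct and follows essentially the same approach as the paper: differentiate the closed form from Lemma~\ref{lemma:expected_shortage} via the Poisson tail-derivative identity $\frac{d}{dT}P(D_j(T)\ge k)=\lambda_j P(D_j(T)=k-1)$, then cancel the remaining terms using the Poisson recursion $Q_j P(D_j(T)=Q_j)=\lambda_j T\,P(D_j(T)=Q_j-1)$. Your tail-sum alternative and the remark on exchanging differentiation with the infinite sum are nice additions that the paper omits.
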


\begin{proof}
{\it Proof. }
First, we obtain the derivative of the probability mass function for  Poisson distribution.
\begin{eqnarray}
 \frac{d P(D_j(T) = r)}{d T}&=& \frac{d}{d T} \frac{e^{- \lambda_j\;T} (\lambda_j\;T)^r}{r!}~,\nonumber\\
 &=& \frac{\frac{d}{d T}\left((\lambda_j\;T)^r\right) e^{-\lambda_j\;T} + (\lambda_j\;T)^r \frac{d}{dT}\left(e^{-\lambda_j\;T}\right)}{r!}~, \nonumber\\
 &=& \frac{r(\lambda_j\;T)^{r-1} \lambda_j e^{-\lambda_j\;T} + (\lambda_j\;T)^r e^{-\lambda_j\;T} (-\lambda_j)}{r!}~, \nonumber\\
 &=& \lambda_j \left(\frac{e^{-\lambda_j\;T} (\lambda_j\;T)^{r-1}}{(r-1)!}-\frac{e^{-\lambda_j\;T} (\lambda_j\;T)^r}{r!}\right)~, \nonumber\\
 &=& \lambda_j \left(P(D_j(T) = r-1) - P(D_j(T) = r) \right)~.
\end{eqnarray}

Next, we use this result to find the derivative of the cumulative probability distribution
\begin{eqnarray}
 \frac{d P(D_j(T) \geq r)}{d T}&=& \frac{d}{d T} \sum_{x = r}^{\infty} P(D_j(T) = x)~,\nonumber\\
 &=& \lambda_j \sum_{x = r}^{\infty} \left(P(D_j(T) = x-1) - P(D_j(T) = x)\right)~, \nonumber\\
 &=& \lambda_j\;P(D_j(T) = r-1)~. 
\end{eqnarray}

Finally, we apply the chain rule and combine terms to obtain
\begin{eqnarray}
 \frac{d \mathbb{E}[S_j(T, Q_j)]}{d T}
 &=& \frac{d}{d T} (\lambda_j\;T\;P(D_j(T) \geq Q_j) - Q_j\;P(D_j(T) \geq Q_j+1))~,\nonumber\\
 &=& \lambda_j\;P(D_j(T) \geq Q_j) +\lambda_j^2\;T\;P(D_j(T) = Q_j-1) ~, \nonumber\\
 & & \qquad \qquad - Q_j \lambda_j\;P(D_j(T) = Q_j)~, \nonumber\\
 &=& \lambda_j\;P(D_j(T) \geq Q_j) +\lambda_j^2\;T\;P(D_j(T) = Q_j-1) ~, \nonumber\\
 & & \qquad \qquad - \lambda_j \lambda_j\;T\;P(D_j(T) = Q_j-1)~, \nonumber\\
 &=&\lambda_j\;P(D_j(T) \geq Q_j)~.
\end{eqnarray}
This completes the derivation of the first derivative of the expected shortage function.~\qed
\end{proof}

\begin{lemma}\label{lem:secondDerivShortage}
The second derivative of the expected shortage function $\mathbb{E}[S_j(T, Q_j)]$ is defined by
\begin{eqnarray}
 \frac{d^2 \mathbb{E}[S_j(T, Q_j)]}{d T} &=& \lambda_j^2\;P(D_j(T) = Q_j-1)~.
\end{eqnarray}
\end{lemma}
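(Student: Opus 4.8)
The plan is to simply differentiate the expression for the first derivative established in Lemma~\ref{lem:firstDerivShortage}, reusing a computation already carried out inside that lemma's proof. Concretely, Lemma~\ref{lem:firstDerivShortage} gives
\begin{equation}
\frac{d \mathbb{E}[S_j(T, Q_j)]}{d T} = \lambda_j\;P(D_j(T) \geq Q_j)~,
\end{equation}
so the second derivative is just $\lambda_j$ times the derivative of the Poisson tail probability $P(D_j(T)\geq Q_j)$ with respect to $T$.

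The key ingredient is the identity for the derivative of the Poisson CDF, which was derived en route to Lemma~\ref{lem:firstDerivShortage}: for any integer $r$,
\begin{equation}
\frac{d P(D_j(T) \geq r)}{d T} = \lambda_j\;P(D_j(T) = r-1)~,
\end{equation}
obtained by term-by-term differentiation of the series $\sum_{x\ge r} P(D_j(T)=x)$ together with the telescoping relation $\frac{d}{dT}P(D_j(T)=x) = \lambda_j\big(P(D_j(T)=x-1)-P(D_j(T)=x)\big)$. I would either cite this directly from the earlier proof or reproduce the one-line telescoping argument for completeness.

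Applying this identity with $r = Q_j$ then yields
\begin{equation}
\frac{d^2 \mathbb{E}[S_j(T, Q_j)]}{d T^2} = \lambda_j \cdot \frac{d}{dT}P(D_j(T)\geq Q_j) = \lambda_j \cdot \lambda_j\;P(D_j(T)=Q_j-1) = \lambda_j^2\;P(D_j(T) = Q_j-1)~,
\end{equation}
which is the claimed expression. There is no real obstacle here: the entire argument is a single differentiation, and all the technical work (differentiating the Poisson pmf and summing the telescoping series) has already been done in the proof of Lemma~\ref{lem:firstDerivShortage}. The only thing to be careful about is interchanging the derivative with the infinite sum, which is justified by the local uniform convergence of the Poisson series and its termwise derivatives on any bounded $T$-interval; I would note this briefly rather than belabor it. This second-derivative formula is what will later give strict convexity-type monotonicity properties needed for the existence and uniqueness of $T^*$ in Theorem~\ref{thm:optimal_cycle}.
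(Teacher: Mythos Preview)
Your proposal is correct and matches the paper's own proof essentially line for line: the paper also simply differentiates the expression from Lemma~\ref{lem:firstDerivShortage} and applies the Poisson tail-derivative identity $\frac{d}{dT}P(D_j(T)\ge r)=\lambda_j\,P(D_j(T)=r-1)$ established there. Your remark about justifying the interchange of derivative and infinite sum is a nice touch that the paper omits.
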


\begin{proof}
{\it Proof. }
Using the result from Lemma~\ref{lem:firstDerivShortage}
\begin{eqnarray}
 \frac{d^2 \mathbb{E}[S_j(T, Q_j)]}{d T}
 &=& \frac{d}{d T} \frac{d \mathbb{E}[S_j(T, Q_j)]}{d T} ~,\nonumber\\
 &=& \frac{d}{d T} \left(\lambda_j\;P(D_j(T) \geq Q_j)\right)~,\nonumber\\
 &=& \lambda_j^2\;P(D_j(T) = Q_j-1)~. 
\end{eqnarray}
This completes the derivation of the second derivative of the expected shortage function.~\qed
\end{proof}

We now establish the derivatives of the cost function used to prove the existence and uniqueness of the optimal cycle length.

\begin{lemma}[First Derivative of Cost Function]\label{lem:firstDerivativeCostFunction}
The first derivative of the cost function $C_{F}(T,\bm Q)$ is defined by
\begin{eqnarray}
\frac{d C_{F}(T,\bm Q)}{d T}&=&\frac{\left(-A + \sum_{j=1}^{J} b_j\;Q_j\;P(D_j(T) \geq Q_j+1) \right)}{T^2}~.
\end{eqnarray}
\end{lemma}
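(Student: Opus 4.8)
The plan is a direct computation via the quotient rule, using the two shortage lemmas already established. Write $C_F(T,\bm Q) = N(T)/T$ where $N(T) = A + \sum_{j=1}^J b_j\,\mathbb{E}[S_j(T,Q_j)]$. Then by the quotient rule,
\begin{equation*}
\frac{d C_F(T,\bm Q)}{dT} = \frac{N'(T)\,T - N(T)}{T^2}.
\end{equation*}
The numerator is the only thing that needs work, and it collapses nicely once the two lemmas are plugged in.

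First I would differentiate $N$: since $A$ is constant, $N'(T) = \sum_{j=1}^J b_j\,\frac{d\mathbb{E}[S_j(T,Q_j)]}{dT} = \sum_{j=1}^J b_j\,\lambda_j\,P(D_j(T)\geq Q_j)$ by Lemma~\ref{lem:firstDerivShortage}. Hence $N'(T)\,T = \sum_{j=1}^J b_j\,\lambda_j\,T\,P(D_j(T)\geq Q_j)$. Next I would expand $N(T)$ using the closed form for the expected shortage from Lemma~\ref{lemma:expected_shortage}, namely $\mathbb{E}[S_j(T,Q_j)] = \lambda_j T\,P(D_j(T)\geq Q_j) - Q_j\,P(D_j(T)\geq Q_j+1)$, so that $N(T) = A + \sum_{j=1}^J b_j\lambda_j T\,P(D_j(T)\geq Q_j) - \sum_{j=1}^J b_j Q_j\,P(D_j(T)\geq Q_j+1)$.

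Subtracting, the term $\sum_{j=1}^J b_j\lambda_j T\,P(D_j(T)\geq Q_j)$ appears in both $N'(T)T$ and $N(T)$ and therefore cancels, leaving $N'(T)T - N(T) = -A + \sum_{j=1}^J b_j Q_j\,P(D_j(T)\geq Q_j+1)$. Dividing by $T^2$ gives the claimed expression. There is no real obstacle here — the only point requiring a sentence of care is the appeal to Lemma~\ref{lem:firstDerivShortage} to justify differentiating the infinite sum defining $\mathbb{E}[S_j(T,Q_j)]$ termwise (already handled in that lemma's proof), and noting that the formula is valid for $T>0$ so that division by $T^2$ is legitimate.
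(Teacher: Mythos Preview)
Your proposal is correct and follows essentially the same approach as the paper: both apply the quotient rule to $C_F(T,\bm Q)=N(T)/T$, invoke Lemma~\ref{lem:firstDerivShortage} for $N'(T)$ and Lemma~\ref{lemma:expected_shortage} for $N(T)$, and observe that the $\lambda_j T\,P(D_j(T)\geq Q_j)$ terms cancel.
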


\begin{proof}
{\it Proof. }
Using the first derivative of the expected shortage function from Lemma~\ref{lem:firstDerivShortage},
\begin{eqnarray}
\frac{d C_{F}(T,\bm Q)}{d T} &=& \frac{T \sum_{j=1}^{J} b_j\;\frac{d\mathbb{E}[S_j(T, Q_j)]}{d T} - A - \sum_{j=1}^{J} b_j\;\mathbb{E}[S_j(T, Q_j)]}{T^2}~,\nonumber\\
&=& \frac{-A + \sum_{j=1}^{J} b_j\;\lambda_j\;T\;P(D_j(T) \geq Q_j)}{T^2} 
    \nonumber\\
&& \quad \qquad \qquad - \frac{\sum_{j=1}^{J} b_j\;\left(\lambda_j\;T\;P(D_j(T) \geq Q_j) + Q_j\;P(D_j(T) \geq Q_j+1)\right)}{T^2}~, \nonumber\\
&=&\frac{-A + \sum_{j=1}^{J} b_j\;Q_j\;P(D_j(T) \geq Q_j+1)}{T^2}~.
\end{eqnarray}
This completes the derivation of the first derivative of the cost function.~\qed
\end{proof}

\begin{lemma}[Second Derivative of Cost Function]\label{lem:secondDerivativeCostFunction}
The second derivative of the cost function $C_{F}(T,\bm Q)$ is given by
\begin{eqnarray}
\frac{d^2 C_{F}(T,\bm Q)}{d T^2} &=&\frac{2\;( A - \sum_{j=1}^J b_j\;Q_j\;P(D_j(T) \geq Q_j+1))}{T^3} \nonumber \\
&&\qquad \qquad \qquad + \frac{\sum_{j=1}^J b_j\;\lambda_j^2\;P(D_j(T) = Q_j-1)}{T}~.
\end{eqnarray}
\end{lemma}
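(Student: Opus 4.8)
The plan is to differentiate the first-derivative expression of Lemma~\ref{lem:firstDerivativeCostFunction} one more time. Writing $C_F'(T) = h(T)/T^2$ with $h(T) = -A + \sum_{j=1}^{J} b_j Q_j P(D_j(T) \geq Q_j+1)$, the quotient rule gives $C_F''(T) = h'(T)/T^2 - 2h(T)/T^3$. The second term immediately produces $2\bigl(A - \sum_{j=1}^{J} b_j Q_j P(D_j(T) \geq Q_j+1)\bigr)/T^3$, which is exactly the first displayed term of the claim, so everything reduces to evaluating $h'(T)$ and dividing by $T^2$.

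To compute $h'(T)$ I would reuse the derivative of the Poisson tail probability established inside the proof of Lemma~\ref{lem:firstDerivShortage}, namely $\frac{d}{dT} P(D_j(T) \geq r) = \lambda_j P(D_j(T) = r-1)$; taking $r = Q_j+1$ yields $h'(T) = \sum_{j=1}^{J} b_j Q_j \lambda_j P(D_j(T) = Q_j)$. The one step that is not purely mechanical is recognizing that this must be rewritten to match the stated form: applying the Poisson moment identity $r\,P(D_j(T)=r) = \lambda_j T\,P(D_j(T)=r-1)$ (the same identity used in the proof of Lemma~\ref{lemma:expected_shortage}) with $r = Q_j$ gives $Q_j P(D_j(T)=Q_j) = \lambda_j T\,P(D_j(T)=Q_j-1)$, hence $h'(T) = \sum_{j=1}^{J} b_j \lambda_j^2 T\,P(D_j(T)=Q_j-1)$ and $h'(T)/T^2 = \tfrac{1}{T}\sum_{j=1}^{J} b_j \lambda_j^2 P(D_j(T) = Q_j-1)$. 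Adding the two pieces gives the asserted formula. An equivalent and perhaps cleaner route is to write $C_F(T) = N(T)/T$ with $N(T) = A + \sum_j b_j \mathbb{E}[S_j(T,Q_j)]$, so that $C_F''(T) = N''(T)/T - 2N'(T)/T^2 + 2N(T)/T^3$; then Lemma~\ref{lem:secondDerivShortage} supplies $N''(T) = \sum_j b_j \lambda_j^2 P(D_j(T)=Q_j-1)$ directly, and the remaining two terms collapse, via the expected-shortage formula of Lemma~\ref{lemma:expected_shortage}, into $2\bigl(A - \sum_j b_j Q_j P(D_j(T) \geq Q_j+1)\bigr)/T^3$.

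The computation is entirely routine once these ingredients are assembled; the only real obstacle is a bookkeeping point, namely noticing that the raw derivative $\sum_j b_j Q_j \lambda_j P(D_j(T)=Q_j)$ must be converted, using the Poisson identity, into the $\lambda_j^2 P(D_j(T)=Q_j-1)$ form that appears in the statement, since otherwise the two expressions look different even though they coincide.
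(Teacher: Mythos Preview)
Your proposal is correct, and in fact your second route---writing $C_F(T)=N(T)/T$ with $N(T)=A+\sum_j b_j\,\mathbb{E}[S_j(T,Q_j)]$, expanding $C_F''=N''/T-2N'/T^2+2N/T^3$, and invoking Lemmas~\ref{lem:firstDerivShortage}, \ref{lem:secondDerivShortage}, and~\ref{lemma:expected_shortage}---is exactly the paper's proof. Your first route (differentiating $h(T)/T^2$ directly) is an equally valid minor reorganization; the Poisson-identity step you flag as the ``bookkeeping point'' is precisely what is bypassed in the paper's route by appealing to Lemma~\ref{lem:secondDerivShortage}, which already packages that identity.
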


\begin{proof}
{\it Proof. }
Using Lemma~\ref{lem:secondDerivShortage} and Lemma~\ref{lem:firstDerivativeCostFunction}, we have
\begin{eqnarray}
\frac{d^2 C_{F}(T,\bm Q)}{d T^2} &=& \frac{d}{d T}\frac{d C_{F}(T,\bm Q)}{d T}\nonumber\\
&=&\frac{2\;(A + \sum_{j=1}^J b_j\;\mathbb{E}[S_j(T, Q_j)])}{T^3} - \frac{2 \sum_{j=1}^J b_j\;\frac{d\mathbb{E}[S_j(T, Q_j)]}{d T}}{T^2} + \frac{\sum_{j=1}^J b_j\;\frac{d^2\mathbb{E}[S_j(T, Q_j)]}{d T^2}}{T}\nonumber\\
&=& \frac{2\;(A + \sum_{j=1}^J b_j\;(\lambda_j\;T\;P(D_j(T) \geq Q_j) - Q_j\;P(D_j(T) \geq Q_j+1))}{T^3} \nonumber \\
&&\qquad \qquad \qquad - \frac{2\;T \sum_{j=1}^J b_j\;(\lambda_j\;P(D_j(T) \geq Q_j))}{T^3} + \frac{\sum_{j=1}^J b_j\;\lambda_j^2\;P(D_j(T) = Q_j-1)}{T}~, \nonumber\\
&=& \frac{2\;( A - \sum_{j=1}^J b_j\;Q_j\;P(D_j(T) \geq Q_j+1))}{T^3} + \frac{\sum_{j=1}^J b_j\;\lambda_j^2\;P(D_j(T) = Q_j-1)}{T}~.
\end{eqnarray}
This completes the derivation of the second derivative of the cost function.~\qed
\end{proof}

\subsection{Other Results on Fixed Cycle Replenishment}

With these technical results, we now prove Lemma \ref{thm:asymptotic} and our main theorem about the fixed cycle replenishment.

\thmasymptotic*
\begin{proof}
{\it Proof. }
Using the expression for expected shortage from Lemma~\ref{lemma:expected_shortage}, we can write
\begin{eqnarray}
C_{F}(T,\bm Q) &=& \frac{A +\sum_{j=1}^{J} b_j \mathbb{E}[S_j(T, Q_j)]}{T}~, \nonumber\\
&=&\frac{A +\sum_{j=1}^{J} b_j\;\left(\lambda_j\;T\; P(D_j(T) \geq Q_j) - Q_j\;P(D_j(T) \geq Q_j+1)\right)}{T}~.
\end{eqnarray}

As $T$ approaches infinity, $P(D_j(T) \geq Q_j)$ approaches 1 for any finite $Q_j$, while the term $\frac{A}{T}$ and the terms involving $Q_j$ approach zero. Therefore,
\begin{eqnarray}
\lim_{T \to \infty} C_{F}(T,\bm Q) &=& \sum_{j=1}^{J} b_j\;\lambda_j~.
\end{eqnarray}
This completes the proof of the asymptotic behavior of the cost function.~\qed
\end{proof}

\thmoptimalcycle*
\begin{proof}
{\it Proof. }
To prove existence and uniqueness, we proceed in three steps: (i) derive the first-order condition using Lemma~\ref{lem:firstDerivativeCostFunction}, (ii) establish monotonicity properties, and (iii) verify the solution is a minimum using Lemma~\ref{lem:secondDerivativeCostFunction}.

Taking the derivative from Lemma~\ref{lem:firstDerivativeCostFunction} and equating to zero, we get

\begin{eqnarray}
\frac{d C_{F}(T,\bm Q)}{d T} = 0&\Rightarrow& \frac{-A + \sum_{j=1}^{J} b_j\;Q_j\;P(D_j(T) \geq Q_j+1)}{T^2} = 0, \nonumber\\
&\Rightarrow& \sum_{j=1}^{J} b_j\;Q_j\;P(D_j(T^*) \geq Q_j+1) = A. 
\end{eqnarray}
To establish uniqueness, we observe two key properties.
First, $ \sum_{j=1}^{J} b_j\;Q_j\;P(D_j(T) \geq Q_j+1) $ is strictly increasing in $T$, which ensures that there can exist at most one value of $T^*$ satisfying the first-order condition.
Second, we examine the limiting behavior and observe that 
\begin{eqnarray}
\lim_{T \to \infty} \sum_{j=1}^{J} b_j\;Q_j\;P(D_j(T) \geq Q_j+1) = \sum_{j=1}^{J} b_j\;Q_j.
\end{eqnarray}

Given these properties when $A < \sum_{j=1}^{J} b_j\;Q_j$, then there exists exactly one value $T^*$ where $\sum_{j=1}^{J} b_j\;Q_j\;P(D_j(T^*) \geq Q_j+1) = A$.

Finally, to verify that this $T^*$ is indeed a minimizer of the cost function, we analyze the second derivative evaluated at $T^*$ using Lemma~\ref{lem:secondDerivativeCostFunction}.
\begin{eqnarray}
\frac{d^2 C_{F}(T,\bm Q)}{d T^2} &=&\frac{2\;( A - \sum_{j=1}^J b_j\;Q_j\;P(D_j(T) \geq Q_j+1))}{T^3} + \frac{\sum_{j=1}^J b_j\;\lambda_j^2\;P(D_j(T) = Q_j-1)}{T}~,\nonumber\\
&=& \frac{\sum_{j=1}^J b_j\;\lambda_j^2\;P(D_j(T) = Q_j-1)}{T}> 0~,
\end{eqnarray}
where the second equality follows by substituting the value of $A$ at the critical point. The positive second derivative confirms that $T^*$ is indeed a minimizer of the cost function.~\qed
\end{proof}

This completes our analysis of the fixed cycle replenishment, establishing both the asymptotic behavior of the cost function and the existence and uniqueness of an optimal cycle length under the specified conditions.

\section{Further Details on the Numerical Study}
\label{TC; numerical}
In our numerical study, we need to ensure fair comparison between the fixed cycle and bounded trigger set control policies by establishing appropriate bounds on the fixed cost $A$. We present two key propositions that determine these bounds.

First, we establish a lower bound on the fixed cost that serves two purposes: (1) it ensures the initial state is part of the continue set, making it optimal to wait rather than immediately replenish in the trigger set control policy, and (2) it guarantees that the optimal cycle length in the fixed cycle replenishment is greater than the lead time $\tau$, preventing replenishment schedules that would be infeasible for the trigger set control policy.

\propinitialcont*
\begin{proof}
{\it Proof. }
Adding initial state ${\bm{i^0}} = {\bm{Q}}$ to the continue set is optimal if
\begin{eqnarray}
    \frac{G({\bm{i^0}})}{\tau} &>& \frac{G({\bm{i^0}})+\hat{g}({\bm{i^0}})\;(1/\Lambda)}{\tau +(1/\Lambda)}
\end{eqnarray}
This inequality can be simplified by cross-multiplication as follows 

\begin{eqnarray}
    \frac{G({\bm{i^0}})}{\tau} &>& \frac{G({\bm{i^0}})+\hat{g}({\bm{i^0}})\;(1/\Lambda)}{\tau +(1/\Lambda)} \Rightarrow G({\bm{i^0}}) > \hat{g}({\bm{i^0}}) \; \tau \nonumber\\
    &\Rightarrow& A + \sum_{j=1}^J b_j \sum_{r=Q_j}^\infty  \left(r-Q_j\right) \; P(D_j(\tau)=r) > \sum_j \lambda_j \; b_j \; P(D_j(\tau) \geq Q_j)\; \tau \nonumber\\
    &\Rightarrow& A > \sum_j b_j \; Q_j \; P(D_j(\tau) \geq Q_j +1).
\end{eqnarray}
This establishes the minimum fixed cost $A_{\min}$ required to make continuing optimal in the initial state.~\qed
\end{proof}

Next, we derive an upper bound on the fixed cost that ensures that the optimal cycle length does not exceed a specified maximum time $T_{\text{max}}$. This is necessary to maintain comparable operating conditions between the two policies.

\propmaxfixed*
\begin{proof}
{\it Proof. }
From Theorem~\ref{thm:optimal_cycle}, we know that the optimal cycle length $T^*$ satisfies
\begin{equation}
\sum_{j=1}^{J} b_j Q_j P(D_j(T^*) \geq Q_j+1) = A.
\end{equation}
Since $\sum_{j=1}^{J} b_j Q_j P(D_j(T) \geq Q_j+1)$ is strictly increasing in $T$ (by Theorem~\ref{thm:optimal_cycle}), this inequality is equivalent to
\begin{eqnarray}
A &\leq& \sum_{j=1}^{J} b_j Q_j P(D_j(T_{\text{max}}) \geq Q_j+1).
\end{eqnarray}
This defines $A_{\max}$ as the upper bound on the fixed cost that ensures $T^* \leq T_{\text{max}}$.~\qed
\end{proof}

Together, these results establish the range of fixed costs $[A_{\min}, A_{\max}]$ within which our numerical comparisons are meaningful and fair.


\subsection{Computational performance comparison}
\label{app:comp_performance}
For the numerical study in Section \ref{sec:numerical}, we compared two exact solution methods for computing optimal trigger set control policies: (i) solving the LP formulation defined in Equation~(\ref{eq:LP_Formulation}) using CPLEX and (ii) a state ordering approach using Algorithm \ref{alg:continue_set}. 
Our results demonstrate that both methods consistently find the same optimal solutions but differ substantially in computational efficiency.

\begin{table}[htbp]
\TABLE
{Computational Performance Comparison\label{tab:comp_performance}}
{\begin{tabular}{|c|cc|cc|cc|cc|}
\hline
& \multicolumn{6}{c|}{Computation Times (seconds)} & \multicolumn{2}{c|}{Scale$^*$} \\
\hline
Items & \multicolumn{2}{c|}{Average} & \multicolumn{2}{c|}{Median} & \multicolumn{2}{c|}{Maximum} & \multicolumn{2}{c|}{} \\
& LP & SO & LP & SO & LP & SO & LP & SO \\
\hline
2 & 0.058 & 0.004 & 0.051 & 0.004 & 0.161 & 0.012 & -- & -- \\
3 & 1.016 & 0.051 & 0.793 & 0.042 & 4.597 & 0.223 & 17.43$\times$ & 12.05$\times$ \\
\hline
\multicolumn{9}{l}{\small $^*$Scale shows increase in mean time from from previous item count} \\
\multicolumn{9}{l}{\small LP: Linear Program (CPLEX), SO: State Ordering} \\
\end{tabular}}
{}
\end{table}

As shown in Table~\ref{tab:comp_performance}, we were able to compare both methods only for problems with 2 and 3 items, as the LP approach became computationally intractable beyond that point even when using the HCC high-performance computing cluster. For these smaller instances, the state-space ordering (SO) method significantly outperforms the linear programming (LP) approach, with performance improvements exceeding 90\%. The performance gap becomes particularly pronounced with 3 items, where SO maintains its efficiency while LP experiences substantial slowdown due to increased model complexity. Importantly, while LP could not handle larger problems, the SO method remains computationally feasible up to problems with 6 items, as demonstrated in our main numerical study. This stark difference in scalability highlights the practical advantage of the SO approach for realistic problem instances.

\end{document}